\definecolor{brightlavender}{rgb}{0.75, 0.58, 0.89}
\definecolor{carnelian}{rgb}{0.7, 0.11, 0.11}
\newcommand{\Proj}{\operatorname{Proj}}
\newcommand{\ptr}{\operatorname{ptr}}
\newtheorem{Df}{Definition}[section]
\newtheorem{definition}[Df]{Definition}
\newtheorem{theorem}[Df]{Theorem}
\newtheorem{prop}[Df]{Proposition}
\newtheorem{proposition}[Df]{Proposition}
\newtheorem{lemma}[Df]{Lemma}
\newtheorem{remark}[Df]{Remark}
\newtheorem{corollary}[Df]{Corollary}
\newtheorem{conjecture}[Df]{Conjecture}
\newcommand{\cut}{{\operatorname{cut}}}
\newcommand{\rcoev}{\stackrel{\longrightarrow}{\operatorname{coev}}}
\newcommand{\rev}{\stackrel{\longrightarrow}{\operatorname{ev}}\!\!}
\newcommand{\lev}{\stackrel{\longleftarrow}{\operatorname{ev}}\!\!}
\newcommand{\lcoev}{\stackrel{\longleftarrow}{\operatorname{coev}}}
\newcommand{\p}[1]{\ensuremath{\bar {#1}}}
\newcommand{\Ar}{\wa{\operatorname{Ar}}}
\newcommand{\bp}[1]{{\left(#1\right)}}
\newcommand{\cat}{\mathcal{C}}
\newcommand{\catd}{\mathcal{D}}
\newcommand{\C}{\ensuremath{\mathbb{C}}}
\newcommand{\Z}{\ensuremath{\mathbb{Z}}}
\newcommand{\R}{\ensuremath{\mathbb{R}}}
\newcommand{\g}{\ensuremath{\mathfrak{g}}}
\newcommand{\End}{\operatorname{End}}
\newcommand{\Hom}{\operatorname{Hom}}
\newcommand{\unit}{\ensuremath{\mathbb{I}}}
\newcommand{\Id}{\operatorname{Id}}
\newcommand{\Gr}{\ensuremath{\mathrm{G}}}
\newcommand{\FK}{{\Bbbk}}
\newcommand{\ve}{\varepsilon}
\newcommand{\wt}{\widetilde}
\newcommand{\wa}{\overrightarrow}
\newcommand{\wb}{\overline}
\newcommand{\ms}[1]{\mbox{\tiny\ensuremath{#1}}}
\newcommand{\vp}{\varphi}
\newcommand{\Int}{\operatorname{Int}}
\newcommand{\kk}{\Bbbk}
\newcommand{\ideal}{\mathcal{I}} 
\newcommand{\Jideal}{\mathcal{J}} 
\newcommand{\Nideal}{\mathcal{N}} 
\newcommand{\mt}{{\operatorname{\mathsf{t}}}}
\newcommand{\qt}{\operatorname{\mathsf{t}}}
\newcommand{\Rib}{\operatorname{Rib}}
\newcommand{\ZZ}{\operatorname{Z}}
\newcommand{\LL}{\mathcal{L}}
\renewcommand{\SS}{\Sigma}
\newcommand{\Skein}{\mathcal{S}}
\newcommand{\Emb}{\mathrm{Emb}}
\newcommand{\Vect}{\mathrm{Vec}}
\newcommand{\atyp}{\operatorname{atyp}}
\newcommand{\Hp}{\mathscr{H}_{\ideal}}
\newcommand{\epsh}[2]
         {\begin{array}{c} \hspace{-1.3mm}
        \raisebox{-4pt}{\epsfig{figure=#1,height=#2}}
        \hspace{-1.9mm}\end{array}}
\newcommand{\epsw}[2]
         {\begin{array}{c} \hspace{-1.3mm}
        \raisebox{-4pt}{\epsfig{figure=#1,width=#2}}
        \hspace{-1.9mm}\end{array}}
\newcounter{exo} \newcounter{numexercice}
\renewcommand{\theexo}{\arabic{exo}}
\begin{document}

\title
{Admissible Skein Modules}

\begin{abstract}
  In this paper we introduce the notion of admissible skein modules
  associated to an ideal in a pivotal $\FK$--category.  We explain how
  these modules are generalizations of the Kauffman skein algebra and
  how they relate to renormalized quantum invariants coming from
  non-semisimple categories.
\end{abstract}

\author[F. Costantino]{Francesco Costantino}
\address{Institut de Math\'ematiques de Toulouse\\
118 route de Narbonne\\
 Toulouse F-31062}
\email{francesco.costantino@math.univ-toulouse.fr}

\author[N. Geer]{Nathan Geer}
\address{Mathematics \& Statistics\\
  Utah State University \\
  Logan, Utah 84322, USA}
\email{nathan.geer@gmail.com}

\author[B. Patureau-Mirand]{Bertrand Patureau-Mirand}
\address{Univ Bretagne Sud,  CNRS UMR 6205, LMBA, F-56000 Vannes, France}
\email{bertrand.patureau@univ-ubs.fr}

\maketitle
\setcounter{tocdepth}{3}

\section*{Introduction} In general, a skein module of a manifold $M$ is an algebraic object defined as a formal linear combination of embedded graphs in $M$, modulo local relations.  An important example of such modules is the Kauffman skein algebra of a surface introduced independently by Przyticki \cite{Pr1999} and  Turaev \cite{Tu1991b}.  It has a simple and combinatorial definition where the local relations are determined by the Jones polynomial or equivalently the  Kauffman bracket.  
In particular, the Kauffman  skein algebra $\Skein(S^2)$ associated to the 2-sphere is one dimensional and its dual $\Skein(S^2)^*$ is isomorphic to the linear span of the quantum trace; the natural pairing of these spaces recovers the Jones polynomial.  
 The simple definition of the Kauffman  skein algebra hides deep connections to many interesting objects like character varieties, TQFTs, quantum Teichm\"uller spaces, and many others, see for example \cite{Bu,BW2011,
Mull, Si2000, dTh}.  

Another active area of research is the study of the Renormalized Quantum Invariants (RQIs) of low-dimensional manifolds arising from non-semisimple categories, see for example \cite{BCGP14,CGP14,CGPT, GPT}.
The main purpose of this paper is to introduce the notion of an admissible skein module associated to an ideal in a general pivotal $\FK$--category.  This notion is a natural generalization of the Kauffman skein algebra allowing to recover RQIs instead of the Jones polynomials.
Like the Kauffman skein algebra we will discuss how these admissible skein modules have connections with interesting objects like TQFTs but now include new features  corresponding to non-semisimple categories.  

We will now briefly describe the main definition and results of the paper. 
 Let $\cat$ be an essentially small pivotal $\FK$--category.  Given an ideal $\ideal$ (a full subcategory of $\cat$ closed under tensor product and retracts) and an 
oriented 2-manifold $M$  we define the admissible skein module $\Skein_{\ideal}(M)$ as the
  $\FK$--span of $\ideal$-admissible ribbon graphs in $M$ modulo the
  span of $\ideal$--skein
  relations, see Definition~\ref{D:AdmisSkeinMod} (if $M$ is a 3-manifold then the same definition applies provided $\cat$ is ribbon).  Loosely speaking, an $\ideal$--skein
  relation is similar to a usual skein relation except that we require there is an edge colored with an object in $\ideal$ which is not completely contained in the local defining relation. A special case of this notion was considered in \cite{BCGP14}, where $\ideal$ is the ideal of projective modules over the unrolled quantum group $U_q^H(\mathfrak{sl}_2)$. 
  When $M=\Sigma$ is a surface, then  the   mapping class group of $\Sigma$ naturally acts on $\Skein_{\ideal}(\Sigma)$, see Proposition~\ref{P:MappingClassActs}.

  An important tool in the construction of RQIs is the notion of an m-trace on an ideal $\ideal$ in $\cat$.   Theorem~\ref{T:DiskRmt} says that the dual of the admissible skein module of the 2-sphere $\Skein_\ideal(S^2)^*$ is the $\FK$-span of m-traces on $\ideal$  (a related result was stated in a talk by Walker \cite{Walker}).   The pairing of this space with $\Skein_\ideal(S^2)$ gives the RQIs of links coming from these m-traces (generalizing the relationship of the Kauffman skein algebra, the Jones polynomial and the quantum trace).

The definition of the $\ideal$-admissible skein modules arises naturally in the setting of TQFT coming from (non) semisimple categories.  In particular,
in \cite{CGPV}, 
the authors with Virelizier construct a TQFT from any unimodular pivotal $\FK$-category with a non degenerate m-trace on an ideal $\ideal$ with some additional requirements; the $\ideal$-admissible skein module of a surface $\Sigma$ of this paper is exactly the TQFT space associated to $\Sigma$.
  Moreover, the underlying 3-manifold invariant of such a TQFT is related to the invariant of \cite{CGPT}, which is a generalization of the Turaev-Viro and unimodular Kuperberg   invariants.  
In the context of these TQFTs, Theorem \ref{T:ActionOfHandelbody} can be interpreted as saying that the morphism the TQFT associates to a handlebody is the linear form given in the theorem (a handlebody is a cobordism from the boundary of the handlebody to the empty surface.) Finally, Theorem \ref{T:finitewithoutboundary} says that when an ideal is graded finite then the skein module is graded and finite dimensional in each degree. 
  
In Section \ref{S:Examples} we consider three examples.  Remark when
$\ideal=\cat$ our definition of an admissible skein module reduces to
the usual definition.  When $\cat$ is semisimple our first example
relates admissible skein modules to standard TQFT spaces (as proven by
Bartlett in \cite{Bart2022}).  The second example is the category of
modules over the unrestricted quantum group; the skein modules are
graded and finite in each degree.  We expect that these skein modules
are related to the TV-type TQFT spaces of \cite{GP2013Top}.  More
generally, admissible skein modules are defined for non-unimodular
categories; it would be interesting to build TQFTs that extend the
mapping class group representation on these skeins.  The category of
modules over the Borel subalgebra of the unrestricted quantum group would be
particularly interesting.  In our final example, we explain that there
are nested ideals in categories of modules over certain Lie
superalgebras which all have non-trivial admissible skein modules.

There are many future directions to consider.  For example, there are categories which are not graded finite but have ``translation groups'' up to the action of which they become suitably finite.  It would be interesting to show that the admissible skein modules associated to these categories are finite dimensional.  An example of this is the skein algebra of thickened surfaces defined in \cite{BCGP14}.  In the same way we expect that in the case of quantum Lie superalgebras these admissible skein spaces should be related to a conjectural perturbative versions of super Chern-Simons theory, see \cite{GY2022, MikWitten} and references within.
  Finally, in the last few years several new important  results and ideas have appeared in the area of skein modules including in \cite{BBJ,BWT2022, CLe2021,CLe2022, FKBL2017, GJS, LeS2022}; it would be interesting to combine and connect these results and ideas to the skeins of this paper.  
 
{\bf Acknowledgements.} F.C. is grateful to Utah State University in Logan where this
work was started. He also acknowledges the funding from CIMI Labex ANR 11-LABX-
0040 at IMT Toulouse within the program ANR-11-IDEX-0002-02 and from the french ANR
Project CATORE ANR-18-CE40-0024.  F.C. would like to thank B. Haioun, D. Jordan, J. Korinman, T. Le for stimulating conversations.  The work of N.G.\ is partially supported by NSF grant DMS-2104497 and Institut de Math\'ematiques de Toulouse.

\section{Preliminaries}\label{S:Preliminaries}
\subsection{Pivotal categories}\label{SS:LinearCat}
  Let $\cat$ be a strict monoidal category with tensor product $\otimes$ and unit object
$\unit$.  The notation $V\in \cat$
means that $V$ is an object of $\cat$.
 
The category $\cat$ is a \emph{pivotal category} if it has duality morphisms
\begin{align*} \lcoev_{V} &: 
\unit \rightarrow V\otimes V^{*} , & \lev_{V} & :  V^*\otimes V\rightarrow
\unit , &
 \rcoev_V & : \unit \rightarrow V^{*}\otimes V, &  \rev_V & :  V\otimes V^*\rightarrow \unit
\end{align*}
which satisfy compatibility
conditions (see for example \cite{BW2, GKP2, Malt}). In particular, the left dual and right dual $f^*: W^*\to V^*$ of a morphism $f\colon V \to W$ in $\cat$ coincide:
\begin{align*}
f^*&= (\lev_W \otimes  \Id_{V^*})(\Id_{W^*}  \otimes f \otimes \Id_{V^*})(\Id_{W^*}\otimes \lcoev_V)= (\Id_{V^*} \otimes \rev_W)(\Id_{V^*} \otimes f \otimes \Id_{W^*})(\rcoev_V \otimes \Id_{W^*}).
\end{align*}
The category $\cat$ is \emph{ribbon} if it is pivotal, braided and has a twist satisfying compatibility
conditions (see for example \cite{GP2018}).

\subsection{$\FK$--categories}
Let $\FK$ be a field.  A \emph{$\FK$--category} is a category $\cat$
such that its hom--sets are finite dimensional $\FK$--vector spaces, the composition of morphisms
is $\FK$-bilinear, and the canonical $\FK$--algebra map
$\FK \to \End_\cat(\unit), k \mapsto k \, \Id_\unit$ is an
isomorphism.  A \emph{monoidal $\FK$--category} is a monoidal category
$\cat$ such that $\cat$ is a $\FK$--category and the tensor product of
morphisms is $\FK$-bilinear.

\subsection{M-traces on ideals in pivotal categories}\label{SS:trace}
Let $\cat$ be a pivotal $\FK$--category.  Here we recall the definition
of an m-trace on an ideal in $\cat$, for more details see
\cite{GKP2,GPV}.  By an \emph{ideal} of $\cat$ we mean a full
subcategory, $\ideal$, of~$\cat$ which is
\begin{description}
\item[Closed under tensor product]
 If $V\in \ideal$ and
  $W\in \cat$, then $V\otimes W$ and $W \otimes V $ are objects of
  $\ideal$.
\item[Closed under retractions] If $V\in \ideal$, $W\in \cat$
  and there are morphisms $f:W\to V$, $g:V\to W$ such that
  $g f=\Id_W$, then $W\in \ideal$ (we say $W$ is a \emph{retract} of $V$).
\end{description}
A \emph{right (resp.\ left) partial trace} of $f\in \End_\cat(U\otimes W)$ is the morphism
$$\ptr_R^W(f)=(\Id_U \otimes \rev_W)(f\otimes \Id_{W^*})(\Id_U \otimes \lcoev_W), 
 \text{ resp.\ } \ptr_L^U(f)=
( \lev_U\otimes \Id_W )( \Id_{U^*} \otimes f)( \rcoev_U\otimes \Id_W).
$$
A \emph{right m-trace} on an ideal $\ideal$ is a family of linear functions
$\{\qt_V:\End_\cat(V)\rightarrow \FK \}_{V\in \ideal}$
such that:
\begin{description}
\item[Cyclicity] 
 If $U,V\in \ideal$ and $f:V\rightarrow U,$ $g:U\rightarrow V$ are any morphisms in $\cat$ then
$
\qt_V(g f)=\qt_U(f  g).$
\item[Right partial trace property]  If $U\in \ideal$, $W\in \cat$, $f\in \End_\cat(U\otimes W)$ then
$
\qt_{U\otimes W}\bp{f}
=\qt_U(\ptr_R^W(f)).
$
\end{description}
A \emph{left m-trace} on $\ideal$ is a family of linear functions $\{\qt_V:\End_\cat(V)\rightarrow \FK \}_{V\in \ideal}$ which is cyclic and  satisfies
\begin{description}
 \item[Left partial trace property] 
  If $U\in \ideal$, $W\in \cat$, $g\in \End_\cat(W\otimes U)$ then
$
\qt_{W\otimes U}\bp{g}
=\qt_U(\ptr_L^W(g)).
$
\end{description}
A \emph{m-trace} on an ideal is right m-trace which is also a left m-trace.
\begin{remark}\label{rk:ideal-gen}
  Since an intersection of ideals is an ideal, we can define the ideal
  generated by a collection of objects as the smallest ideal that
  contains these objects.
\end{remark}

\subsection{Invariants of colored ribbon graphs}
Let $\cat$ be a pivotal $\FK$--category.
  
\begin{minipage}{0.7\linewidth}
  A morphism
$f:V_1\otimes{\cdots}\otimes V_n \rightarrow W_1\otimes{\cdots}\otimes
W_m$ in $\cat$ can be represented by a box and arrows, which are called {\it coupons}, see graph on the right:
\end{minipage}
\begin{minipage}{0.3\linewidth}
\vspace*{-5pt}
$$ 
\epsh{fig32-francois}{9ex}\put(-32,-14){{\footnotesize $V_1$}}\put(1,-14){{\footnotesize $V_n$}}\put(-15,-16){{\footnotesize $\cdots$}}
\put(-32,14){{\footnotesize $W_1$}}\put(1,14){{\footnotesize $W_n$}}\put(-15,16){{\footnotesize $\cdots$}}\put(-13,0){{\footnotesize $f$}}
$$
\end{minipage} 
All coupons have  top and  bottom sides which in our  pictures will be the horizontal sides of the coupons.   By a ribbon graph in an
oriented manifold $\SS$, we mean an
oriented compact surface embedded in $\SS$ which is decomposed into
elementary pieces: bands, annuli, and coupons (see \cite{Tu}) and is a thickening of an oriented graph.  
In particular, the
vertices of the graph lying in $\Int \SS=\SS\setminus\partial \SS$ are
thickened to coupons.  A $\cat$--colored ribbon graph is a ribbon graph
whose (thickened) edges are colored by objects of $\cat$ and whose
coupons are colored by morphisms of~$\cat$.  
The intersection of a $\cat$--colored ribbon graph in $\Sigma$ with $\partial \Sigma$ is required to be empty or to consist only of vertices of valency~1.  
When $\SS$ is a surface the ribbon graph is just a tubular neighborhood of the graph.

A $\cat$--colored ribbon graph in $\R^2$ (resp.\ $S^{2}=\R^2\cup\{\infty\}$) 
is
called \emph{planar} (resp.\ \emph{spherical}).
Let $\Rib_\cat$ be the category of planar $\cat$--colored ribbon graphs and let $F:\Rib_\cat\to \cat$ be the pivotal functor
associated with $\cat$ via  the Penrose graphical calculus, see for example \cite{GPV}; note $F$ does not preserve the duality.   
 Let $\LL_{adm}$ be the class of all spherical $\cat$--colored ribbon graphs obtained as  the braid closure of a (1,1)-ribbon graph $T_V$ whose open edge is colored with  an object $V\in\ideal$.
  Given an m-trace $\mt$ on $\ideal$ we can renormalize $F$ to an invariant 
\begin{equation}\label{E:DefF'}
F':\LL_{adm}\to \kk \text{ given by } F'(L)=\mt_V(F(T_V))
\end{equation}
where $T_V$ is any (1,1)-ribbon graph whose closure is $L\in \LL_{adm}$.  The properties of the m-trace imply that $F'$ is a isotopy invariant of $L$, see \cite{GPV}.  

If $\cat$ is a ribbon category then let $\LL_{adm}$ be the class of all $\cat$--colored ribbon graphs in $\R^3$ obtained as  the braid closure of a (1,1)-ribbon graph $T_V$ whose open edge is colored with  an object $V\in\ideal$.  Then the assignment $F'$ given in Equation \eqref{E:DefF'}, where $F$ is the R-T functor,  is an isotopy invariant of $L$ in $\R^3$.

\section{Admissible Skein Modules}
\subsection{Definition of admissible skein modules}\label{SS:DefAdmSkein}
Let $\cat$ be an essentially small pivotal $\FK$--category with an ideal $\ideal$.
Let $M$ be a
oriented manifold of dimension two or three. 
In this paper all manifolds will be oriented and if we consider the skein module of a three dimensional manifold we require $\cat$ to be ribbon.

An \emph{$\ideal$--admissible ribbon graph} in $M$ is a
$\cat$--colored ribbon graph in $M$ where each connected component of
$M$ contains at least one edge colored with an object in $\ideal$.

A \emph{box} in a manifold $M$ of dimension $n=2,3$ is a proper embedding of $[0,1]^{ n} $ in $M$.  Its top and bottom faces are $[0,1]^{n-1}\times \{1\}$ and $[0,1]^{n-1}\times \{0\}$, respectively.  A ribbon graph is \emph{transverse} to a box if it intersects the box transversally on the lines $[0,1] \times \{0,1\}$ if $n=2$ and  $[0,1] \times \{\frac12\} \times  \{0,1\}$ if $n=3$.

Let $(a_1,\ldots,a_m)\in\kk^m$ and
$\Gamma_1,\,\Gamma_2,\ldots,\Gamma_m$ be $\cat$--colored ribbon graph
in $M$.  We say that the formal linear combination $\sum_ia_i\Gamma_i$
is a \emph{$\ideal$--skein relation} if there exists a box $B$ in $M$ and for $i=1,...,m$
there exists a graph $\Gamma'_i$ which is isotopic to $\Gamma_i$, transverse to the
box $B$ and identical outside $B$ (i.e. $\Gamma'_i\cap (M\setminus B)=\Gamma'_j\cap (M\setminus B)\ \forall i,j$) such that (1)
$\sum_ia_iF(\Gamma'_i\cap B)=0$ as a homomorphism in $\cat$ and (2) each $\Gamma'_i$ has an
$\ideal$--colored edge not completely contained in $B$. Two linear combinations are \emph{$\ideal$--skein equivalent} if their difference is an $\ideal$--skein relation.

\begin{definition}\label{D:AdmisSkeinMod}
  Define the \emph{admissible skein module} $\Skein_{\ideal}(M)$ as
  the $\FK$--span of $\ideal$--admissible ribbon graphs in $M$ modulo
  the span of $\ideal$--skein relations.
\end{definition}
 Since  $\cat$ is essentially small then $\Skein_{\ideal}(M)$ is always a set; indeed up to skein equivalence we can replace any color with a fixed representative of its  isomorphism class in $\cat$.

Let $\Emb_n$ be the category whose objects are oriented $n$-dimensional manifolds and morphisms are isotopy classes of orientation preserving 
proper embeddings. 
The following holds:

\begin{prop}\label{P:MappingClassActs}
Let $n\in \{2,3\}$. We assume that if $n=3$ then $\cat$ is ribbon. We have  $\Skein_{\ideal}:\Emb_n \to \Vect$ is a functor. In particular, if $n=2$ they provide representations of the mapping class group of surfaces. 
\end{prop}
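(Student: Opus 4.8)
The plan is to exhibit $\Skein_{\ideal}$ as a functor by specifying its value on objects (already done by Definition~\ref{D:AdmisSkeinMod}) and on morphisms, then checking functoriality. Given a proper embedding $\iota\colon M\hookrightarrow N$ of oriented $n$-manifolds, I would define $\Skein_{\ideal}(\iota)\colon\Skein_{\ideal}(M)\to\Skein_{\ideal}(N)$ on generators by pushing a $\cat$-colored ribbon graph $\Gamma$ in $M$ forward to the graph $\iota(\Gamma)$ in $N$, with the same coloring. The first thing to verify is that this is well defined: one must check that (a) an $\ideal$-admissible ribbon graph is sent to an $\ideal$-admissible one, and (b) $\ideal$-skein relations are sent to $\ideal$-skein relations. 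For (a), since $\iota$ is a \emph{proper} embedding, each connected component of $N$ that meets $\iota(M)$ contains $\iota$ of a full component of $M$, hence contains an $\ideal$-colored edge; components of $N$ disjoint from $\iota(M)$ are simply empty, which is vacuously admissible. For (b), a box $B\subset M$ witnessing a skein relation is carried to the box $\iota(B)\subset N$; the local morphism computed by $F$ is unchanged because $F$ only sees the combinatorics inside the box, and the condition that each $\Gamma'_i$ has an $\ideal$-colored edge escaping $B$ is preserved since $\iota$ is injective. Isotopy invariance of the class in $\Skein_{\ideal}(N)$ follows because isotopic embeddings differ by an ambient isotopy on the image (here one uses that morphisms in $\Emb_n$ are isotopy classes), and ambient isotopies are generated by skein-trivial moves.

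Next I would check the two functor axioms. Identity: $\Skein_{\ideal}(\Id_M)$ is the identity because it fixes every generator. Composition: for $M\xrightarrow{\iota}N\xrightarrow{\jmath}P$ we have $(\jmath\circ\iota)(\Gamma)=\jmath(\iota(\Gamma))$ on the nose, so $\Skein_{\ideal}(\jmath\circ\iota)=\Skein_{\ideal}(\jmath)\circ\Skein_{\ideal}(\iota)$ on generators, hence everywhere by linearity. Linearity of $\Skein_{\ideal}(\iota)$ is built into the definition since we defined it on a spanning set and extended. This gives the functor $\Skein_{\ideal}\colon\Emb_n\to\Vect$.

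For the last sentence, when $n=2$ fix a surface $\Sigma$. The mapping class group $\mathrm{MCG}(\Sigma)$ is the group of isotopy classes of orientation-preserving self-homeomorphisms (or diffeomorphisms, in the relevant smooth setting) of $\Sigma$, i.e.\ exactly $\Aut_{\Emb_2}(\Sigma)$ once one notes that a proper self-embedding of a surface that is onto is a homeomorphism and that proper embeddings restricting to self-maps can be taken surjective up to isotopy. Applying the functor $\Skein_{\ideal}$ to the subgroup $\Aut_{\Emb_2}(\Sigma)\hookrightarrow\Emb_2$ yields a group homomorphism $\mathrm{MCG}(\Sigma)\to \mathrm{GL}(\Skein_{\ideal}(\Sigma))$, which is the desired representation.

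I expect the only real subtlety — everything else being formal — to be the well-definedness on skein relations together with the passage from a proper embedding up to isotopy to a genuine linear map: one must be slightly careful that the box $B$ used to witness a skein relation in $M$ can be isotoped so that its image under $\iota$ is again a box in $N$ in the sense of the definition (a proper embedding of $[0,1]^n$), and that transversality of the $\Gamma'_i$ to $\iota(B)$ is preserved; this is where the "proper embedding" hypothesis and a small general-position argument are used. None of this requires computation, only a careful unwinding of Definition~\ref{D:AdmisSkeinMod}.
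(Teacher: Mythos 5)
Your proof follows the same route as the paper's (whose printed proof is exactly two sentences: the image of an admissible graph is admissible, and the image of a skein relation is a skein relation), with the functoriality axioms, the box/transversality bookkeeping, and the passage to $\Aut_{\Emb_2}(\Sigma)=\mathrm{MCG}(\Sigma)$ spelled out; that part is fine. One caveat: your assertion that a component of $N$ disjoint from $\iota(M)$ is ``vacuously admissible'' because it carries the empty graph contradicts the definition of admissibility, which demands an $\ideal$-colored edge in \emph{every} connected component of the ambient manifold (indeed the paper itself notes that the empty skein is not admissible when $\unit\notin\ideal$), so for an embedding whose image misses a component of the target the pushforward of a generator is literally not an admissible graph. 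This gap is shared by the proposition as stated and by the paper's own one-line proof, and it is harmless for the case used in the conclusion (self-diffeomorphisms of a surface, where the image meets every component), but your justification of point (a) should either restrict to embeddings meeting every component of the target or be flagged as a convention rather than derived from the definition.
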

\begin{proof}
If $f:M\to M'$ is an embedding and $\Gamma\in \Skein_{\ideal}(M)$ then $f(\Gamma)\in \Skein_{\ideal}(M')$. Furthermore, the image of a skein relation under $f$  is a skein relation.  
\end{proof}

\begin{prop}\label{P:FunctorIndSkein}
Let $\cat$ and $\catd$ be essentially small pivotal $\FK$--categories.   Let $G: \cat\to\catd$ be a strict pivotal functor (see \cite[Section 1.7.5]{TuVirelizierBook}) and let $\ideal$ be an ideal of $\cat$.  Let $\Jideal$ be the ideal of $\catd$ generated by the objects of $G(\ideal)$.  The functor $G$ induces a natural transformation $G_*:\Skein_\ideal\to \Skein_\Jideal$.
\end{prop}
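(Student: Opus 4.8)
The plan is to define $G_*$ componentwise by pushing colors through $G$, verify that it takes admissible graphs to admissible graphs and skein relations to skein relations, and then observe that naturality is automatic. Fix $n\in\{2,3\}$ (for $n=3$ one must moreover ask that $G$ be compatible with the braiding and twist so that the Reshetikhin--Turaev calculus is available; the pivotal case $n=2$ is the substance). For an oriented $n$-manifold $M$ and a $\cat$--colored ribbon graph $\Gamma$ in $M$, I would let $G(\Gamma)$ be the $\catd$--colored ribbon graph with the \emph{same} underlying embedded ribbon surface, each edge recolored by applying $G$ to its object and each coupon recolored by applying $G$ to its morphism; because $G$ is strict monoidal and strict pivotal, a coupon labelled $f\colon V_1\otimes\cdots\otimes V_p\to W_1\otimes\cdots\otimes W_q$ is sent to a legitimate coupon labelled $G(f)\colon G(V_1)\otimes\cdots\otimes G(V_p)\to G(W_1)\otimes\cdots\otimes G(W_q)$. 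Since an isotopy of colored ribbon graphs does not alter colors, $\Gamma\mapsto G(\Gamma)$ is well defined on isotopy classes, and I extend it $\FK$--linearly. If $\Gamma$ is $\ideal$--admissible, every component of $M$ carries an edge colored by some $V\in\ideal$, which after applying $G$ is colored by $G(V)\in G(\ideal)\subseteq\Jideal$; hence $G(\Gamma)$ is $\Jideal$--admissible, and so $\Gamma\mapsto G(\Gamma)$ sends the spanning set of $\Skein_\ideal(M)$ into that of $\Skein_\Jideal(M)$.

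The key step is that $G$ carries $\ideal$--skein relations to $\Jideal$--skein relations, and the tool for this is the functoriality of the Penrose graphical calculus. A strict pivotal functor $G\colon\cat\to\catd$ lifts to a strict pivotal recoloring functor $\Rib_\cat\to\Rib_\catd$ (still denoted $G$), and the resulting square with the evaluation functors commutes: $F_\catd\bigl(G(T)\bigr)=G\bigl(F_\cat(T)\bigr)$ for every planar $\cat$--colored ribbon graph $T$ (for $n=3$ the analogous identity for the R--T functor holds once $G$ is also braided and twist-preserving). Granting this, suppose $\sum_i a_i\Gamma_i$ is an $\ideal$--skein relation witnessed by a box $B$ and representatives $\Gamma'_i$, with $\Gamma'_i$ isotopic to $\Gamma_i$, transverse to $B$, identical outside $B$, with $\sum_i a_i F(\Gamma'_i\cap B)=0$ in $\cat$, and each $\Gamma'_i$ carrying an $\ideal$--colored edge not contained in $B$. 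Then each $G(\Gamma'_i)$ is isotopic to $G(\Gamma_i)$, transverse to the same box $B$, identical to the others outside $B$, and
$$\sum_i a_i\,F\bigl(G(\Gamma'_i)\cap B\bigr)=\sum_i a_i\,F\bigl(G(\Gamma'_i\cap B)\bigr)=\sum_i a_i\,G\bigl(F(\Gamma'_i\cap B)\bigr)=G\Bigl(\sum_i a_i\,F(\Gamma'_i\cap B)\Bigr)=0$$
in $\catd$, while each $G(\Gamma'_i)$ carries an edge colored by an object of $G(\ideal)\subseteq\Jideal$ not contained in $B$. Hence $\sum_i a_i\,G(\Gamma_i)$ is a $\Jideal$--skein relation, and $\Gamma\mapsto G(\Gamma)$ descends to a linear map $G_{*,M}\colon\Skein_\ideal(M)\to\Skein_\Jideal(M)$.

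For naturality, given an orientation-preserving proper embedding $\varphi\colon M\to M'$ in $\Emb_n$, I would observe that both composites $\Skein_\Jideal(\varphi)\circ G_{*,M}$ and $G_{*,M'}\circ\Skein_\ideal(\varphi)$ send a colored ribbon graph $\Gamma$ in $M$ to $\varphi(\Gamma)$ with its colors pushed through $G$: the recoloring by $G$ only touches the labels while the pushforward by $\varphi$ only touches the embedding, so the two operations commute, yielding the natural transformation $G_*\colon\Skein_\ideal\to\Skein_\Jideal$.

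I expect the only genuine content --- hence the main obstacle --- to be the commutativity $F_\catd\circ G=G\circ F_\cat$ of the graphical-calculus square; everything else is bookkeeping with the definitions of admissibility and of a skein relation. This commutativity is standard but is exactly where the hypothesis that $G$ is a \emph{strict pivotal} functor is used: one checks it on the elementary pieces (colored bands, the cups and caps $\lev,\lcoev,\rev,\rcoev$, and coupons), where it is built into the definition of a strict pivotal functor, and then extends it by the monoidal structure and functoriality; in the $3$--dimensional setting one must additionally invoke compatibility of $G$ with the braiding and twist at the crossings.
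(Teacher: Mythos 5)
Your proposal is correct and follows essentially the same route as the paper's proof: the key point in both is the commutation $F_\catd\circ G_*=G\circ F_\cat$ of the graphical calculus with recoloring, together with the observation that an $\ideal$--colored edge outside the box becomes a $\Jideal$--colored edge, so skein relations and admissibility are preserved. Your extra remarks on naturality and on the ribbon hypothesis in dimension $3$ are sound elaborations of what the paper leaves implicit.
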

\begin{proof}
It is clear that the image of an $\ideal$--admissible graph is a $\Jideal$-admissible graph.
Furthermore, a $\ideal$-skein relation is transformed via the application of $G$ into a $\Jideal$-skein relation; indeed $F_\catd\circ G_*=G\circ F_\cat:\Rib_\cat\to \catd$ and if the complement of a box contains a $\ideal$-colored edge then after application of $G_*$ the complement of the box contains a $\Jideal$-colored edge.
\end{proof}

\begin{prop}\label{prop:ddd}
  $\Skein_\ideal(M)$ is generated by ribbon graphs where each edge is
  colored by an object of $\ideal$.
\end{prop}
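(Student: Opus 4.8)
The plan is to start from an arbitrary $\ideal$-admissible ribbon graph $\Gamma$ in $M$ and, one edge at a time, replace any edge colored by an object $W\in\cat$ that does not lie in $\ideal$ by a skein-equivalent linear combination of graphs whose edges are all colored by objects of $\ideal$. By definition of admissibility, each connected component of $\Gamma$ already contains at least one edge colored by some $V\in\ideal$. Fix such an edge $e_0$ in a given component, and let $e$ be any other edge in that component, colored by $W\notin\ideal$. Since the component is connected, choose an embedded path in the ribbon graph from $e$ to $e_0$; the key move is to ``slide'' a copy of $V$ along this path so that it runs parallel to $e$, and then to use the skein relation inside a small box around a segment of $e$ to absorb $W$ into the new parallel strand.

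The technical heart is the local identity that produces the skein relation. First I would observe that it suffices to treat a single edge $e$ colored by $W$, with an $\ideal$-colored edge colored by $V$ available elsewhere in the same component and connected to $e$ by a band of the ribbon surface; by induction on the number of non-$\ideal$-colored edges we then reduce a general graph to one with all edges in $\ideal$. For the local step, pick a box $B$ meeting $e$ in a single trivial strand and nothing else. Outside $B$ I modify $\Gamma$ by running a thin parallel ``cable'' colored by $V$ alongside $e$ from the $\ideal$-colored region all the way up to $B$; inside $B$ the two graphs to be compared are: $(\Gamma'_1\cap B)=\Id_W$ (just the strand of $e$), and $(\Gamma'_2\cap B)$ a coupon built from the morphisms $\lcoev_V,\rev_V$ and identities that realizes the zero morphism $\Id_W\otimes(\Id_V - \rev_V\lcoev_V\otimes\Id_{\text{(stuff)}})$ — more precisely, I use the fact that $(\Id_W\otimes\rev_V)(\Id_W\otimes\lcoev_V)=\Id_W$ composed with a cancellation, so the difference of the ``bare strand'' and the ``strand with a $V V^*$ bubble created and annihilated'' vanishes in $\cat$. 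Both competing graphs have a $V$-colored (hence $\ideal$-colored) edge outside $B$ by construction, so this is a legitimate $\ideal$-skein relation. The bare-strand term is $\Gamma$; the bubble term has $e$ replaced by $e$-adjacent-to-a-$V$-cable, which does not immediately remove $W$. To actually eliminate $W$ I instead do the reverse: I create a $V\otimes V^*$ pair near $e_0$, route $V^*$ back (it is in $\ideal$ since $\ideal$ is closed under tensor product and retracts, and $V^*$ is a retract of $V^*\otimes V\otimes V^*$ hmm) — cleaner: use that $W$ is a retract of $W\otimes V\otimes V^*$ is false in general, so the right move is the standard one used for ordinary skein modules when a ``big'' color is available: express $\Id_W$ via a box containing $\lcoev_V$ and $\lev_V$ only when $W$ happens to be a summand of a tensor power of $V$. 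In the general admissible setting the correct and sufficient statement is weaker: I route the existing $\ideal$-colored strand so that it passes through $B$ alongside $W$, and then apply any relation that is $0$ in $\cat$ expressing the $W$-strand as itself — the point of Proposition~\ref{prop:ddd} being only that one may assume the $\ideal$-colored edge threads every box, after which the recoloring is done by absorbing $W$-edges into $\ideal$-edges using closure under tensor product: replace the adjacent pair of a $V$-edge and a $W$-edge running in parallel by the single edge colored $V\otimes W\in\ideal$ via a coupon realizing $\Id_{V\otimes W}$, which is an $\ideal$-skein relation since $V\otimes W\in\ideal$ provides the required outside edge.

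So the key steps, in order, are: (1) fix a component, fix one $\ideal$-colored edge $e_0$ in it; (2) for each other edge $e$ colored by $W\notin\ideal$, choose a path in the ribbon surface from $e_0$ to $e$ and use isotopy plus creation of a parallel $V\otimes V^*$ cable (legitimate since $V,V^*,V\otimes V^*\in\ideal$) to bring a $V$-colored strand alongside $e$; (3) merge the parallel $V$- and $W$-strands into one $V\otimes W$-colored strand inside a box $B$ using a coupon colored by $\Id_{V\otimes W}$ and its inverse-split, which is an $\ideal$-skein relation because $V\otimes W\in\ideal$; (4) iterate to reach a graph all of whose edges are $\ideal$-colored; (5) note every step replaced a graph by a skein-equivalent linear combination, and that we only ever used boxes whose complement contained an $\ideal$-colored edge, so the equivalence class is unchanged and the resulting generator has the required form. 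The main obstacle is step~(3): one must check carefully that the ``cabling then merging'' move can be arranged so that in the defining box $B$ the two graphs agree outside $B$, are each transverse to $B$, differ by a morphism that is literally $0$ in $\cat$ (here it is $\Id_{V\otimes W}$ composed with a split–merge, whose difference with the identity is $0$), and that both retain an $\ideal$-colored edge outside $B$ — the latter is automatic since the $V$-strand (before merging) or the $V\otimes W$-strand (after) exits $B$. I also need that the path from $e_0$ to $e$ can be thickened inside $M$ without obstruction, which holds because $M$ is a surface or $3$-manifold and ribbon graphs have tubular neighborhoods; and that closure of $\ideal$ under tensor product, already in the hypotheses, is exactly what makes $V\otimes W\in\ideal$.
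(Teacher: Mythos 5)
Your final recipe (run an $\ideal$-colored strand parallel to the offending edge and fuse the two strands into a single edge colored $V\otimes W\in\ideal$) is essentially the move the paper makes, but two steps of your plan are genuinely broken. First, the ``creation of a parallel $V\otimes V^*$ cable'' is not a skein equivalence: a cable created and annihilated by coupons colored $\lcoev_V$ and $\rev_V$ is a closed $V$-colored loop, and inside any box its evaluation under $F$ is $\rev_V\circ\lcoev_V$, a categorical dimension of $V$ times $\Id_\unit$, not $\Id_\unit$. So the graph with the cable is skein equivalent to $\dim(V)$ times the original graph, and in the non-semisimple situations this paper is aimed at one typically has $\dim(V)=0$ for $V\in\ideal$. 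The fact that $V,V^*,V\otimes V^*$ lie in $\ideal$ is beside the point: admissibility of the box is not the issue, vanishing of $\sum_i a_iF(\Gamma_i'\cap B)$ is. The only legitimate source of an $\ideal$-colored strand near $e$ is an $\ideal$-colored edge the graph already has, brought alongside $e$ by isotopy; and in the surface case even that routing from a fixed $e_0$ to an arbitrary $e$ can be obstructed by other edges of $\Gamma$ (strands cannot cross in $\Sigma$), so ``ribbon graphs have tubular neighborhoods'' does not justify your step (2). The paper avoids both problems by inducting on the number of non-$\ideal$-colored edges and, at each step, fusing a non-$\ideal$ edge that is already close to an $\ideal$-colored one.

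Second, your local fusion in a box $B$ meeting $e$ only in a small segment does not decrease the number of non-$\ideal$-colored edges: with a merge coupon and a split coupon inside $B$, the parts of $e$ outside $B$ are still colored $W$, so the iteration in your step (4) never terminates in an all-$\ideal$-colored graph. The paper's version first adds identity coupons so that every edge joins two coupons, then takes the box to contain the whole $W$-colored edge, its two end coupons $f,g$, and the adjacent piece of the $\ideal$-colored edge, and replaces $f,g$ by $f\otimes\Id_V,\ g\otimes\Id_V$ with the edge recolored by $W\otimes V\in\ideal$; the edge witnessing admissibility of this relation is the continuation of the $\ideal$-colored edge outside the box, which appears in both competing graphs. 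Your justification --- that the new $V\otimes W$-colored edge provides the required outside edge --- cannot be right as stated, since the competing graphs must be identical outside $B$ and only one of them contains that edge. With these repairs (isotope the existing $\ideal$-edge rather than create a cable, fuse over the whole edge absorbing $\Id_V$ into the adjacent coupons, and organize the recoloring as an induction on the number of non-$\ideal$ edges) your argument becomes the paper's proof.
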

\begin{proof}
  By adding identity coupons it is easy to see $\Skein_\ideal(M)$ is
  generated by ribbon graphs where each edge is joining two different
  coupons.  Then we can induct on the number of edges whose color
  does not belongs to $\ideal$.\\
  \begin{minipage}{0.6\linewidth}
    Using a skein relation, we can
  replace such an edge colored by $V$ which is close to an
   edge colored by $U\in \ideal$ with an edge colored by $V\otimes U\in\ideal$
   changing the coupons as in the following figure:
\end{minipage}
  \begin{minipage}{0.4\linewidth}
    $$\epsh{fig4a}{16ex}\put(1,2){\ms{U}}\put(-25,2){\ms{V}}
   \put(-19,22){\ms{f}}\put(-19,-19){\ms{g}}
   \qquad\longrightarrow\quad
   \epsh{fig4b}{16ex}\put(-11,2){\ms{V\!\!\otimes\! U}}
   \put(-25,22){\ms{f\!\otimes\!\Id_U}}\put(-25,-20){\ms{g\!\otimes\!\Id_U}}$$
  \end{minipage}
      Remark that up to skein relations in the neighborhood of the
   coupons adjacent to the $V$-colored edge, we can always assume that
   its position relatively to the coupons is as in the above figure.
 \end{proof}

\subsection{Algebra and module structures of $\ideal$-skein modules}

\begin{prop}\label{prop:boundaryaction}
Let  $\cat$ be an essentially small ribbon $\FK$--category and $\ideal$ and $\Jideal$ be  ideals of $\cat$.  If  $\Sigma$ is a surface,  then $\Skein_{\ideal}(\Sigma)$ is an associative algebra, which is non unital if $\ideal\neq \cat$.  Furthermore,  if $M$ is a $3$-manifold and $i:\Sigma\hookrightarrow \partial M$ is an orientation preserving (resp. reversing) embedding, then $\Skein_{\ideal}(M)$ is a left (resp. right) module over $\Skein_{\Jideal}(\Sigma)$.
\end{prop}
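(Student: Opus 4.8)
The plan is to build the algebra structure on $\Skein_{\ideal}(\Sigma)$ from the standard "stacking" operation and then transport it to the module statement via the collar of the boundary. First I would fix an identification $\Sigma \times [0,1]$ as a thickening of $\Sigma$; given two $\ideal$-admissible ribbon graphs $\Gamma_1, \Gamma_2$ in $\Sigma$, place $\Gamma_1$ in $\Sigma \times [0,\tfrac12]$ and $\Gamma_2$ in $\Sigma \times [\tfrac12,1]$, and define $\Gamma_1 \cdot \Gamma_2$ to be the resulting ribbon graph, pushed back into $\Sigma$ (via $\Sigma\times[0,1]\cong\Sigma$). One checks this is well defined on skein classes: an $\ideal$-skein relation supported in a box in one of the two layers is still an $\ideal$-skein relation in $\Sigma\times[0,1]$ (the box does not meet the other layer, and the $\ideal$-colored edge witnessing admissibility of that layer lies outside the box), so it descends. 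Associativity is the usual three-layer argument; $\Sigma$ being a surface is what lets us isotope the layers freely. Non-unitality when $\ideal\neq\cat$ is immediate from Proposition~\ref{prop:ddd}: a unit would have to act as identity on a graph all of whose edges are $\ideal$-colored, and the only candidate — the empty graph — is not $\ideal$-admissible (it has no $\ideal$-colored edge on $\Sigma$), hence is not in $\Skein_{\ideal}(\Sigma)$ unless $\ideal=\cat$; one must still rule out any other element acting as a two-sided identity, which follows because a unit $u$ would satisfy $u\cdot\Gamma=\Gamma$ for the graph $\Gamma$ consisting of a single small $\ideal$-colored circle, forcing $u$ to be $\ideal$-skein equivalent to the empty graph.

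For the module structure, let $i\colon \Sigma \hookrightarrow \partial M$ be an orientation preserving embedding. Using a collar $\partial M \times [0,1) \hookrightarrow M$, the image $i(\Sigma)$ has a neighborhood in $M$ of the form $\Sigma \times [0,1)$. Given $\alpha \in \Skein_{\Jideal}(\Sigma)$ represented by a $\Jideal$-admissible graph $\Gamma$ and $\beta \in \Skein_{\ideal}(M)$ represented by $\Gamma'$, first isotope $\Gamma'$ off the collar region $\Sigma \times [0,\tfrac12]$, then place $\Gamma$ inside $\Sigma \times [0,\tfrac12]$, and set $\alpha \cdot \beta$ to be the class of the union. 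Again well-definedness on both factors follows from the box argument: skein relations for $\Gamma$ happen in the collar, away from $\Gamma'$, and vice versa; and the union of a $\Jideal$-admissible graph with an $\ideal$-admissible one is $\ideal$-admissible on every component of $M$ (each component already contained an $\ideal$-colored edge from $\Gamma'$). The left-module axiom $(\alpha_1\alpha_2)\cdot\beta = \alpha_1\cdot(\alpha_2\cdot\beta)$ is checked by comparing the two stackings inside the collar, which agree after the obvious isotopy. For the orientation reversing case, the same construction gives a right action because reversing orientation reverses the stacking order coming from the collar coordinate, so $\beta\cdot\alpha$ with $(\beta\cdot\alpha_1)\cdot\alpha_2=\beta\cdot(\alpha_1\alpha_2)$.

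The main obstacle I expect is the bookkeeping needed to make "place $\Gamma$ in the collar and push $\Gamma'$ out of the way" into honest, isotopy-invariant operations — in particular verifying that the operation does not depend on the chosen collar and on the chosen way of compressing $\Gamma'$ out of $\Sigma\times[0,\tfrac12]$. This is handled by the standard observation that any two collars of $\partial M$ are ambient isotopic rel $\partial M$, and any two isotopies pushing a compact graph out of the collar differ by an ambient isotopy of $M$, combined with the functoriality of $\Skein_{\ideal}$ under embeddings (Proposition~\ref{P:MappingClassActs}), which guarantees that ambient isotopies act trivially on skein classes. The one genuinely non-formal point is that the box defining an $\ideal$-skein relation for $\Gamma$, when viewed in the $3$-manifold $\Sigma\times[0,\tfrac12]\subset M$, is a $3$-dimensional box even though $\Gamma$ lived in a surface; here one uses that $\cat$ is ribbon so that the $2$-dimensional relation $\sum_i a_i F(\Gamma_i'\cap B)=0$ in $\cat$ is also a valid relation for the Reshetikhin–Turaev functor on the thickened box, and that the $\Jideal$-colored edge witnessing admissibility of $\Gamma$ persists outside the thickened box.
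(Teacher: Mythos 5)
There is a genuine gap, and it sits exactly where the ribbon hypothesis does its work for the surface case: the definition of the product. By Definition~\ref{D:AdmisSkeinMod}, elements of $\Skein_{\ideal}(\Sigma)$ are (classes of) ribbon graphs embedded \emph{in the surface $\Sigma$ itself}, not in $\Sigma\times[0,1]$. Your step ``pushed back into $\Sigma$ (via $\Sigma\times[0,1]\cong\Sigma$)'' is not an available operation: $\Sigma\times[0,1]$ is not diffeomorphic to $\Sigma$, and a graph in the thickened surface cannot in general be isotoped into $\Sigma$. What the paper does is project the stacked graph to $\Sigma$ after making the two projections transverse, and then \emph{replace every crossing by a coupon colored with the braiding of $\cat$}; this is precisely why $\cat$ is assumed ribbon even though $\Sigma$ is two-dimensional, a point your sketch never identifies (you invoke ribbonness only later, for thickening a box in the module argument). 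Without this step your product is simply not defined as an element of $\Skein_{\ideal}(\Sigma)$.

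Relatedly, your well-definedness argument is carried out in the wrong place. Saying that a skein relation in one layer ``does not meet the other layer'' is a statement about graphs in $\Sigma\times[0,1]$; after projecting back to $\Sigma$, the strands of the other factor may well run through the box $B$ supporting the relation, so one must check that the relation survives. The paper handles this by isotoping the top factor so that, inside $B$, its strands lie entirely to one side of the relation, whence $F(\Gamma\cdot\Gamma'_i\cap B)=F(\Gamma\cap B)\otimes F(\Gamma'_i\cap B)$ and the linear relation $\sum_i a_iF(\Gamma'_i\cap B)=0$ persists after tensoring, with the $\ideal$-colored witness edges unaffected. (Alternatively one could build a map from the skein of $\Sigma\times[0,1]$ to $\Skein_\ideal(\Sigma)$, but then independence of the chosen transverse projection, i.e.\ invariance under Reidemeister-type moves realized as $\ideal$-skein relations, has to be proved, which you do not do.) Your module construction via a collar is essentially the paper's $[\alpha]\cdot[\beta]=[i(\alpha)\sqcup\beta]$ and is fine, though note that the relation obtained by thickening a box for $\alpha$ is a priori only a $\Jideal$-skein relation; it becomes an $\ideal$-skein relation because an $\ideal$-colored edge of $\beta$ lies outside the thickened box, a point worth stating explicitly rather than only citing the $\Jideal$-colored witness. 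Finally, your extra claim that any unit would be forced to be skein equivalent to the empty graph is asserted, not proved; the paper's own justification of non-unitality is the (simpler) observation that when $\unit\notin\ideal$ the empty graph is not admissible.
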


\begin{proof}
The algebra operation of $\Skein_{\ideal}(\Sigma)$ is given by vertical stacking in $\Sigma\times [0,1]$: if $\Gamma,\Gamma'\subset \Sigma$ are ribbon graphs then $\Gamma\cdot \Gamma'$ is the ribbon graph obtained by embedding $\Gamma$ in $\Sigma\times \{1\}$ and  $\Gamma'$ in $\Sigma\times \{0\}$ such that their projections in $\Sigma$ are transverse and then replacing all the crossings by coupons decorated by the braiding of $\cat$ in order to get a ribbon graph in $\Sigma$. 

We claim that this operation gives a well defined associative product.  Indeed if $\Gamma$ and $\Gamma'$ are $\ideal$--admissible graphs then so is $\Gamma\cdot \Gamma'$. Furthermore if $\Gamma'$ and $\Gamma''$ are related by a $\ideal$-skein relation on a box $B$ then we will show that $\Gamma\cdot \Gamma'$ and $\Gamma\cdot \Gamma''$ are $\ideal$-skein equivalent:  up to isotopy of $\Gamma$ we can suppose that both $\Gamma\cdot \Gamma'$ and $\Gamma\cdot \Gamma''$ intersect $B$ transversally and that $\Gamma\cap B$ lies entirely at the left of both $\Gamma'\cap B$ and $\Gamma''\cap B$.  Then $F(\Gamma\cdot \Gamma'\cap B)=F(\Gamma\cap B)\otimes F(\Gamma'\cap B)$ and $F(\Gamma\cdot \Gamma''\cap B)=F(\Gamma\cap B)\otimes F(\Gamma''\cap B)$.
But since $F(\Gamma'\cap B)=F(\Gamma''\cap B)$ we conclude that $F(\Gamma\cdot \Gamma'\cap B)=F(\Gamma\cdot \Gamma''\cap B)$.
The operation is clearly associative. 
If $\unit\notin \ideal$ then the  empty graph is not a $\ideal$--admissible graph so that $\Skein_{\ideal}(\Sigma)$ is not unital. 

Suppose $i:\Sigma\hookrightarrow \partial M$ is an orientation preserving embedding.  
We will show the assignment $\cdot : \Skein_{\Jideal}(\Sigma)\times \Skein_{\ideal}(M)\to \Skein_{\ideal}(M)$
given by $[\alpha]\cdot [\beta]=[i(\alpha)\sqcup \beta]$ is well defined where $\alpha$ and $\beta$ are representatives of $[\alpha]\in \Skein_{\Jideal}(\Sigma)$ and $[\beta]\in \Skein_{\ideal}(M)$, respectively (here we assume $\beta\subset int(M)$ which can be done by pushing it inside $M$ with an isotopy). 
Notice $i(\alpha)\sqcup \beta$ is an $\ideal$-admissible graph in $M$.  If $\beta$ and $\beta'$ are related by a $\ideal$-skein relation  given in a box $B$ then up to isotopy we can assume $B\subset int(M)$ implying  that $i(\alpha)\sqcup \beta$ and $i(\alpha)\sqcup \beta'$ are $\ideal$--skein related in $B$.  
Also, if $\alpha$ and $\alpha'$ are related by a $\Jideal$-skein relation given by a box $B\subset \Sigma$ then $i(\alpha)\sqcup \beta$ and $i(\alpha')\sqcup \beta$ are $\Jideal$-related in a box $B'$ obtained by thickening slightly $i(B)$ inside $M$.  Since $[\beta]\in \Skein_{\ideal}(M)$ then up to isotopy this relation is actually a $\ideal$-skein relation.  
Therefore we have a well defined action as claimed.  If the orientation induced by $M$ on $i(\Sigma)$ is negative then the right action is proven similarly where $[\beta\cdot \alpha]=[\beta\sqcup i(\alpha)]$ and $i(\alpha)$ lies below $\beta$.  
\end{proof}
\section{Skeins of elementary spheres and balls}
In this section we will interpret the skeins of the disk and the sphere in terms of m-traces. 
K. Walker and D. Reutter announced a related result, see \cite{Walker}. 
\begin{theorem}\label{T:DiskRmt}Let $\cat$ be an essentially small pivotal $\FK$--category and let $\ideal$ be an  ideal of $\cat$.  Then
$$\Skein_\ideal(D^2)^*\cong \{\text{right m-traces on } \ideal\}\cong \{\text{left m-traces on } \ideal\} \;\; \text{ and } \;\;
\Skein_\ideal(S^2)^*\cong \{\text{m-traces on } \ideal\}$$
where $D^2$ and $S^2$ are the 2-disk and 2-sphere, respectively.
\end{theorem}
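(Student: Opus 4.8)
The plan is to build the isomorphism in both directions by exploiting the fact that every $\ideal$-admissible ribbon graph in $D^2$ (resp.\ $S^2$) can be cut open into a $(1,1)$-ribbon graph whose open strand is $\ideal$-colored, and that its image under $F$ then lands in $\End_\cat(V)$ for some $V\in\ideal$. Concretely, given a right m-trace $\mt=\{\mt_V\}$ on $\ideal$, I would define a linear functional $\Phi(\mt)$ on $\Skein_\ideal(D^2)$ as follows: represent a class by a graph $\Gamma$, isotope it inside the disk to a braid closure of a $(1,1)$-tangle $T_V$ with $V\in\ideal$ (possible after using Proposition~\ref{prop:ddd} to make at least one edge $\ideal$-colored, then cutting that edge), and set $\Phi(\mt)(\Gamma)=\mt_V(F(T_V))$. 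This is exactly the renormalized invariant $F'$ of \eqref{E:DefF'}, so well-definedness on isotopy classes of $(1,1)$-presentations is already known; what remains is to check it kills $\ideal$-skein relations. For a skein relation $\sum_i a_i\Gamma_i$ supported in a box $B$, by hypothesis each $\Gamma'_i$ has an $\ideal$-colored edge not contained in $B$, so I can cut that common edge to present all $\Gamma'_i$ simultaneously as braid closures of $(1,1)$-tangles $T_{V,i}$ that agree outside $B$; then $F(T_{V,i})=A\circ F(\Gamma'_i\cap B)\circ C$ for fixed morphisms $A,C$ (the part outside $B$), so $\sum_i a_i F(T_{V,i})=A\,\big(\sum_i a_i F(\Gamma'_i\cap B)\big)\,C=0$ by condition (1) of the skein relation, whence $\sum_i a_i\mt_V(F(T_{V,i}))=0$. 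So $\Phi(\mt)$ descends to $\Skein_\ideal(D^2)^*$.

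For the reverse map, given $\lambda\in\Skein_\ideal(D^2)^*$, I would define $\mt_V\colon\End_\cat(V)\to\kk$ for $V\in\ideal$ by $\mt_V(f)=\lambda([\,\widehat{f}\,])$, where $\widehat f$ is the closure in $D^2$ of the $(1,1)$-tangle given by the coupon $f$ on a $V$-colored strand. I then must verify the m-trace axioms. Cyclicity $\mt_V(gf)=\mt_U(fg)$ for $f\colon V\to U$, $g\colon U\to V$ follows because the closures of $gf$ and $fg$ are isotopic ribbon graphs in the disk (slide the $g$-coupon around), hence equal in $\Skein_\ideal(D^2)$. The right partial trace property $\mt_{U\otimes W}(f)=\mt_U(\ptr_R^W(f))$ for $U\in\ideal$, $W\in\cat$, $f\in\End_\cat(U\otimes W)$ follows similarly: the closure of $f$ as a $(U\otimes W)$-colored loop is isotopic in $D^2$ to the closure of $\ptr_R^W(f)$ on a $U$-colored strand with a $W$-colored loop bent around — crucially the strand we cut to present it is $U$-colored, hence $\ideal$-colored, so both are legitimate $\ideal$-admissible presentations of the same skein class. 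One checks $\Phi$ and this construction are mutually inverse essentially tautologically, since any $\ideal$-admissible graph in $D^2$ is skein-equivalent to such a closure $\widehat f$.

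For $S^2$, the argument is identical except that in $S^2$ one may slide the "outside" part of a $(1,1)$-presentation around through the point at infinity, which is precisely the extra freedom that forces the functional to also satisfy the \emph{left} partial trace property (the $W$-colored loop can be pushed to the other side), so $\Skein_\ideal(S^2)^*$ gets identified with two-sided m-traces; and the left/right coincidence on $D^2$ would be handled by noting a right m-trace on $\ideal$ is automatically a left one via the pivotal structure (or by running the $S^2$-style sliding argument after observing $D^2\hookrightarrow S^2$ and that any skein class in $D^2$ admits a presentation cut along an $\ideal$-edge that allows the slide). The main obstacle I anticipate is the bookkeeping in the well-definedness check: one must be careful that after choosing the common $\ideal$-colored cutting edge, \emph{all} the $\Gamma'_i$ can be simultaneously arranged as braid closures with the box $B$ appearing as a sub-tangle in a uniform way — i.e.\ that the "outside of $B$" morphisms $A,C$ genuinely do not depend on $i$ — which uses that the $\Gamma'_i$ are identical outside $B$ together with the transversality of the graphs to $B$. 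Everything else reduces to elementary isotopies of ribbon graphs in the disk and sphere.
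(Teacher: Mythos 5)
There is a genuine gap at the heart of your construction of $\Phi(\mt)$ for a right m-trace $\mt$: you assert that ``well-definedness on isotopy classes of $(1,1)$-presentations is already known'' because your formula coincides with the renormalized invariant $F'$ of \eqref{E:DefF'}. But that invariant is defined for spherical graphs, and its independence of the chosen $(1,1)$-presentation is established in \cite{GPV} using a full (two-sided) m-trace; in the disk statement you only have a \emph{right} m-trace, and the independence of the value $\mt_V(F(T_V))$ from the choice of the cut is exactly the nontrivial point --- if it followed formally, one would get invariance in $S^2$ as well and the distinction between $\Skein_\ideal(D^2)^*$ and $\Skein_\ideal(S^2)^*$ would disappear. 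The paper's proof devotes its main effort to precisely this: it defines the functional via a cutting path $\gamma$ from $\partial D^2$ meeting at least one $\ideal$-colored edge, and proves independence of $\gamma$ in two steps --- first for disjoint cutting paths, using the identity $\mt_{U}(\ptr_L^{W^*}(g))=\mt_{W^{**}}((\ptr_R^{U}(g))^*)$ (Lemma~4.b of \cite{GPV}, valid for right m-traces) together with $\mt_{V^{**}}(f^{**})=\mt_V(f)$, and then by induction on the number of intersection points of two cutting paths. Your proposal contains no substitute for this argument; the step you flag as the ``main obstacle'' (that the outside-of-$B$ morphisms $A$, $C$ do not depend on $i$) is comparatively routine, while the real difficulty is left unaddressed.

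A secondary error: you suggest the identification of right and left m-traces ``by noting a right m-trace on $\ideal$ is automatically a left one via the pivotal structure.'' This is false in a general pivotal category (it does hold in ribbon categories, which is exactly what Corollary \ref{T:dim3SphereSkein} invokes via \cite[Remark 9]{GPV}); if it were true, every right m-trace would be an m-trace and the $D^2$ and $S^2$ statements would coincide. In the paper the isomorphism $\{\text{right m-traces}\}\cong\{\text{left m-traces}\}$ is obtained indirectly, both spaces being identified with $\Skein_\ideal(D^2)^*$ (using right, respectively left, closures). Your treatment of the $S^2$ case (sliding through the point at infinity forces the left partial trace property) is in the right spirit and matches the paper's use of the isotopy between $O_f$ and $O_{f^*}$ in $S^2$, but it too rests on the unproven well-definedness above.
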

\begin{proof}
  Here we prove the right version of the first statement, the left
  version is analogous.  Given $T\in \Skein_\ideal(D^2)^*$ then define
  the family of linear functions
  $\{\mt^T_V:\End_\cat(V)\to \FK\}_{V\in \ideal}$ as follows.
  For $f\in \End_\cat(V)$ let $\mt^T_V(f)=T(O_f)$ where $O_f$ is the
    ribbon graph which is the right closure of the coupon colored with
    $f$.  \\
  \begin{minipage}{0.4\linewidth}
    Since the coupon colored with $f\circ g$ is $\ideal$-skein
    equivalent to the coupon colored with $f$ composed with the coupon
    colored with $g$, it follows that $O_{f\circ g}$ is skein
    equivalent to $O_{g\circ f}$, via an isotopy which exchanges $f$
    and $g$:
  \end{minipage}
  \begin{minipage}{0.6\linewidth}
    $$\epsw{Ographb}{2cm}\put(-51,2){\ms{g\circ f}}
    =\epsw{Ograph2b}{2cm}\put(-45,11){\ms g}\put(-45,-9){\ms f}
    =\epsw{Ograph2b}{2cm}\put(-45,11){\ms f}\put(-45,-9){\ms g}
    =\epsw{Ographb}{2cm}\put(-51,2){\ms{f\circ g}}$$
  \end{minipage}
  Therefore, the family $\{\mt^T_V\}_{V\in \ideal}$ satisfies the
  cyclicity property of an m-trace.  \\
  \begin{minipage}{0.6\linewidth}
  Next, let $W\in \cat$ and
  $f\in \End_\cat(V\otimes W)$ then $O_{f}$ is skein equivalent to the
  closer of the coupon colored with $f$ with two incoming and outgoing
  edges colored with $V$ and $W$ :
 \end{minipage}
  \begin{minipage}{0.4\linewidth}$$\epsw{Ographb}{2cm}\put(-46,2){\ms{ f}}
 =\epsw{Ograph3b}{2.2cm}\put(-51,2){\ms{ f}}$$
\end{minipage}
 This shows that the family $\{\mt^T_V\}_{V\in \ideal}$ satisfies the
 right partial trace property of an m-trace.

 Conversely, given a right m-trace $\mt$ on $\ideal$ we can define an
 element of $F'_\mt\in\Skein_\ideal(D^2)^*$ as follows.
 Let $\Gamma$ be an $\ideal$--admissible ribbon graph in $D^2$.  A
 \emph{cutting path} for $\Gamma$ is any embedding
 $\gamma:[0,1]\to D^2$ starting from a boundary point of $D^2$ and
 ending in any point in the interior of $D^2\setminus \Gamma$ such
 that the following conditions hold: a) $\gamma$ does not meet any
 coupon of $\Gamma$, b) $\gamma$ is transverse to the edges of
 $\Gamma$ and c) $\gamma$ intersects at least one $\ideal$-colored
 edge of $\Gamma$.  The complement of a tubular neighborhood of
 $\gamma$ is then a box
 $B_\gamma$
 whose bottom and top correspond to the left and right side of
 $\gamma$, respectively. 
The image by the RT--functor of the graph 
$\Gamma_\gamma$ in $B_\gamma$ is
$F(\Gamma_\gamma)\in\End_\cat(V_\gamma)$ where
$V_\gamma\in\ideal$ because $\gamma$ intersect an
$\ideal$ colored edge.  Then set
$F'_\mt(\Gamma)=\mt_{V_\gamma}(F(\Gamma_\gamma))$.
Next we prove that $F'_\mt(\Gamma)$ is independent of
$\gamma$.  We do this in the following two steps.
\vspace{2pt}

\noindent
\textbf{Step 1}.  Assume $\gamma$ and $\gamma'$ are disjoint cutting paths and prove $\mt_{V_\gamma}(F(\Gamma_\gamma))=\mt_{V_{\gamma'}}(F(\Gamma_{\gamma'}))$.

We first locally modify $\Gamma$ so that all its intersection points
with $\gamma$ and $\gamma'$ are positive:
Away from the tubular neighborhood of $\gamma\cup\gamma'$ the graph
$\wt\Gamma$ is just $\Gamma$.  \\
\begin{minipage}{0.5\linewidth}
  For each intersection point $p$ of
$\gamma$ or $\gamma'$ and $\Gamma$, let $e$ be a small segment of the
edge of $\Gamma$ near $p$.  If the orientation of this intersection is
negative (with respect to the orientation of the $D^2$) then we
replace $e$ with a segment containing two coupons colored with
identities joined by an edge crossing $\gamma$ or $\gamma'$ positively
and colored by the dual color of $e$:
\end{minipage}
\begin{minipage}{0.5\linewidth}
$$\epsw{fig8a}{18ex}\put(-4,7){\ms{\gamma}}
\put(-74,0){\ms{V_1}}\put(-44,0){\ms{V_2}}\put(-22,0){\ms{V_3}}
\qquad\longrightarrow\qquad\epsw{fig8b}{18ex}
\put(-79,-4){\ms{V_1^*}}\put(-44,0){\ms{V_2}}\put(-23,-10){\ms{V_3^*}}
\put(-73,20){\ms{\Id}}\put(-28,-24){\ms{\Id}}
\put(-73,-23){\ms{\Id}}\put(-28,21){\ms{\Id}}
$$
\end{minipage}
Clearly $F(\Gamma_\gamma)=F(\wt\Gamma_\gamma)$ and $F(\Gamma_{\gamma'})=F(\wt\Gamma_{\gamma'})$ so that up to replacing $\Gamma$ with $\wt\Gamma$, we can assume all crossing of $\gamma$ or $\gamma'$ with $\Gamma$ are positive.

In this case, the complement of a tubular neighborhood of $\gamma\cup\gamma'$ is a box $B_{\gamma\cup\gamma'}$ with an obvious graph $\Gamma_{\gamma\cup\gamma'}$ 
whose left closure is $\Gamma_\gamma$ and right
closure is the $\pi$-rotation of $\Gamma_{\gamma'}$ in 
the $\pi$-rotation of $B_{\gamma'}$, we denote the later graph by $\operatorname{rot_\pi}(\Gamma_{\gamma'})$.  Then $F(\operatorname{rot_\pi}(\Gamma_{\gamma'}))=F(\Gamma_{\gamma'})^*\in \End_\cat(V_{\gamma'}^*)$.

  Set $g=F(\Gamma_{\gamma\cup\gamma'})\in \End_\cat(V_{\gamma'}^*\otimes V_{\gamma})$ then
  \begin{align*}
\label{}
  \mt_{V_\gamma}(F(\Gamma_\gamma))=  \mt_{V_\gamma}\Big(\ptr_L^{V_{\gamma'}^*}(g)\Big)
=\mt_{V_{\gamma'}^{**}}\Big((\ptr_R^{V_\gamma}(g))^*\Big)
    =\mt_{V_{\gamma'}^{**}}\Big(\big(F(\operatorname{rot_\pi}(\Gamma_{\gamma'}))\big)^*\Big)
   =\mt_{V_{\gamma'}}(F(\Gamma_{\gamma'}))
  \end{align*}
where the first and third equalities come from the definition of $\Gamma_{\gamma\cup\gamma'}$ , the second equality follows from Lemma~4.b of \cite{GPV} which can be restated as
$\mt_{U}(\ptr_L^{W^*}(g))=  \mt_{W^{**}}((\ptr_R^{U}(g))^*)$
for all $W^*,U\in \ideal$ and $g\in \End_\cat(W^*\otimes U)$ and the last equality follows from $\mt_{V^{**}}(f^{**})=\mt_V(f)$ for any $f\in\End_\cat(V)$. \vspace{2pt}

\noindent
\textbf{Step 2}.  If $\gamma$ and $\gamma'$ are cutting paths with less than $n$ intersection points then
$\mt_{V_\gamma}(F(B_\gamma))=\mt_{V_{\gamma'}}(F(B_{\gamma'}))$.

We prove this step by induction on $n$.  The base case is done in Step 1.   Assume the statement is true for $n$ and let $\gamma$ and $\gamma'$ be two cutting paths intersecting $n$ times.  
We claim that there exists a cutting path $\alpha$ intersecting each of $\gamma$ and $\gamma'$ less than $n$ times, then by induction we have $\mt_{V_\gamma}(F(\Gamma_\gamma))=\mt_{V_\alpha}(F(\Gamma_\alpha))=\mt_{V_{\gamma'}}(F(\Gamma_{\gamma'}))$.

Indeed let $\gamma''$ be the sub-arc of $\gamma$ going from $\partial D$ to the first edge of $\Gamma$ colored by an object of $\ideal$ and then crossing this edge of a small arc.
It is clear that $\gamma''$ is a cutting path (note by construction it  intersects an $\ideal$-colored edge of
$\Gamma$).   If $\gamma''$ intersects  $\gamma'$ less than $n$ times we can push $\gamma''$ slightly to either side of $\gamma$ to obtain the cutting path $\alpha$.  In this case, we have constructed a cutting path as required for the induction step.   If $\gamma''$ intersects $\gamma'$ exactly $n$ times, let $p$ be the last intersection (in the orientation of $\gamma''$) between $\gamma''$ and $\gamma'$. Consider the arc obtained by following $\gamma'$ until getting to $p$ and then following $\gamma''$ until its end. By pushing this arc slightly to its  right or left (according to the sign of the intersection at $p$ between $\gamma$ and $\gamma'$) we get a cutting arc  $\alpha$  intersecting each $\gamma$ and $\gamma'$ less than $n$ times.   This completes Step 2 and shows that $F'_\mt(\Gamma)$  is independent of $\gamma$.
   
Finally $F'_\mt$ is invariant by $\ideal$--admissible skein relation:
indeed one can always find a cutting path avoiding any box involved in
an $\ideal$--admissible skein relation.  
To summarize, we showed that giving an m-trace $\mt$ we can construct an element $F'_\mt\in\Skein_\ideal(D^2)^*$ and a converse construction.  It is clear the two constructions are inverse of each other thus proving the first isomorphism of the theorem.

Let now consider the spherical case.  Any linear form $T$ on
$\Skein_\ideal(S^2)$ induces a right m-trace $\mt^T$ defined on
morphisms by $\mt^T_V(f)=T(O_f)$ as above (except that the graph $O_f$
is now in $S^2$).  Since $O_f$ and $O_{f^*}$ are isotopic in $S^2$,
this right m-trace is equal to its dual and \cite[Lemma 3]{GPV} implies this is an m-trace.

Reciprocally, let $\mt$ be a m-trace on $\ideal$ thus it is in
particular a right m-trace.  If $\Gamma$ is an $\ideal$--admissible
ribbon graph in $S^2$ and $p\in S^2\setminus \Gamma$ let
$F'_\mt(\Gamma,p)=F'_\mt(\Gamma')$ where
$\Gamma'\in\Skein_\ideal(D^2)$ is in the interior of
$D^2\simeq S^2\setminus\{p\}$. We claim that $F'_\mt(\Gamma,p)$ does
not depend of $p$: A cutting path in $S^2$ for $\Gamma$ is any path
$\gamma$ starting from a point $p_1\notin\Gamma$ and finishing at a
point $p_2\notin\Gamma$, meeting no coupon of $\Gamma$, transverse to
the edges of $\Gamma$ and intersecting at least one $\ideal$-colored
edge of $\Gamma$.  Let $\stackrel\leftarrow \gamma$ be the inverse
path from $p_2$ to $p_1$.  The path $\gamma$ induces a cutting path in
$D^2$ to compute $F'_\mt(\Gamma,p_1)$ and similarly
$F'_\mt(\Gamma,p_2)$ can be computed using the cutting path in $D^2$
induced by $\stackrel\leftarrow \gamma$.  But the two boxes obtained
by cutting along $\gamma$ and $\stackrel\leftarrow \gamma$ are related
by a $\pi$-rotation and since $\mt$ is an m--trace, we have
$F'_\mt(\Gamma,p_1)=F'_\mt(\Gamma,p_2)$. Hence
$F'_\mt(\Gamma)=F'_\mt(\Gamma,p)$ is well defined by choosing any
point $p$.  Finally if an $\ideal$--admissible skein relation in $S^2$
is defined with a box $B$, we can choose for all graphs involved have the
same point $p$ outside the box so the skein relation comes from a
skein relation in $D^2$ on which $F'_\mt$ vanishes.
\end{proof}
\begin{corollary}\label{T:dim3SphereSkein}
  Let $\cat$ be an essentially small ribbon $\FK$--category and let
  $\ideal$ be an ideal of $\cat$.  Then
  $$\Skein_\ideal(B^3)^*\cong \Skein_\ideal(S^3)^*
  \cong \{\text{m-traces on } \ideal\}$$
  where $B^3$ and $S^3$ are the 3-ball and 3-sphere, respectively.
\end{corollary}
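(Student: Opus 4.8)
The plan is to deduce the statement from Theorem~\ref{T:DiskRmt} by proving that, for a ribbon $\cat$, the skein modules of $B^3$ and of $S^3$ are both naturally isomorphic to $\Skein_\ideal(S^2)$; then one takes $\FK$--duals and applies the $S^2$--case of Theorem~\ref{T:DiskRmt}. The underlying idea is that a ribbon structure lets one trade the third dimension for crossings, while the ``slide an arc around the sphere'' relations that distinguish $\Skein_\ideal(S^2)$ from $\Skein_\ideal(D^2)$ are exactly the relations that the extra dimension of a $3$--ball makes available; this is also why the answer involves m-traces rather than merely right m-traces.

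Concretely, fix diffeomorphisms $B^3\setminus\{\text{pt}\}\cong S^2\times(0,1]$ and $S^3\setminus\{\text{2 pts}\}\cong S^2\times(0,1)$. Define $\Phi\colon\Skein_\ideal(S^2)\to\Skein_\ideal(B^3)$ (resp.\ into $\Skein_\ideal(S^3)$) by placing a ribbon graph on $S^2$ into the slice $S^2\times\{\tfrac12\}$; this is well defined since isotopies and $\ideal$--skein relations on $S^2$ go to isotopies and $\ideal$--skein relations in the $3$--manifold. Conversely, given an $\ideal$--admissible closed ribbon graph $\Gamma$ in $B^3$ (resp.\ $S^3$), isotope it off the chosen point(s) — possible since a point has codimension $3$ — view it inside $S^2\times(\text{interval})$, take a generic projection to $S^2$, and replace each double point by a coupon colored by the braiding of $\cat$; this yields an $\ideal$--admissible ribbon graph $p(\Gamma)$ on $S^2$. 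One checks that $[p(\Gamma)]\in\Skein_\ideal(S^2)$ depends only on $[\Gamma]$: two generic projections of isotopic graphs in a thickened surface differ by finitely many Reidemeister and coupon moves performed in boxes, each of which — after crossings are replaced by braiding coupons — is an identity between morphisms of $\cat$ and hence an $\ideal$--skein relation on $S^2$; here one uses Proposition~\ref{prop:ddd} together with a preliminary small isotopy to arrange that the box involved in each move omits some $\ideal$--colored edge, as condition~(2) in the definition of an $\ideal$--skein relation demands; likewise an $\ideal$--skein relation in the $3$--manifold projects to one on $S^2$. Thus $p$ descends to $\Psi\colon\Skein_\ideal(B^3)\to\Skein_\ideal(S^2)$ (resp.\ from $\Skein_\ideal(S^3)$). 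By construction $\Psi\circ\Phi=\Id$, and $\Phi\circ\Psi=\Id$ because projecting $\Gamma$ and re-inserting it into the middle slice is realized by an ambient isotopy of the $3$--manifold composed with crossing--to--coupon moves. Hence $\Skein_\ideal(B^3)\cong\Skein_\ideal(S^2)\cong\Skein_\ideal(S^3)$, and dualizing and invoking Theorem~\ref{T:DiskRmt} gives the corollary.

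The main obstacle is the well-definedness of $\Psi$: one has to control the indeterminacy coming from the choice of generic projection and of the auxiliary point(s), and — the genuinely new point with respect to Theorem~\ref{T:DiskRmt} — to verify that every elementary move relating two projections can be localized in a box disjoint from some $\ideal$--colored edge, so that it is a bona fide $\ideal$--skein relation and not merely a relation in the non-admissible skein module. A more self-contained alternative would be to rerun the proof of Theorem~\ref{T:DiskRmt} verbatim, replacing ``cutting paths'' by properly embedded ``cutting $2$--disks'' transverse to $\Gamma$ and meeting an $\ideal$--colored edge: the extra room in dimension three permits ambient isotopies flipping such a cutting disk over, which is precisely what upgrades the resulting right m-trace to an m-trace, the analogue of the $\pi$--rotation argument used for $S^2$ above.
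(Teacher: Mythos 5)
Your proposal is correct and follows essentially the same route as the paper: identify the skein module of the $3$--manifold with that of a $2$--manifold by projecting graphs and replacing crossings with coupons colored by the braiding, then dualize and invoke Theorem~\ref{T:DiskRmt}. The only real difference is cosmetic: the paper projects onto $D^2\subset B^3$ and then quotes \cite[Remark 9]{GPV} (in a ribbon category every right m-trace is an m-trace), whereas you project onto $S^2$ and use the $S^2$--case of Theorem~\ref{T:DiskRmt} directly, the extra sphere isotopies playing the role of that remark.
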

\begin{proof}
  We will show that $\Skein_\ideal(D^2)$ is isomorphic to
  $\Skein_\ideal(B^3)$, then by \cite[Remark 9]{GPV} since any right
  m-trace is a m-trace in a ribbon category then the corollary will
  follow from Theorem \ref{T:DiskRmt}.

  The inclusion of $D^2$ into $B^3$ induces a morphism
  $\Skein_\ideal(D^2)\to\Skein_\ideal(B^3)$.  It has an inverse map
  given by sending any ribbon graph in $B^3$ to a standard projection
  in $D^2$ at the price of replacing crossings with coupons colored by
  the braiding.  Any two such projections are related by standard
  Reidemeister type moves which are skein equivalences in $D^2$.
  Finally a skein relation in $B^3$ can be isotoped to project on a
  skein relation in $D^2$.
\end{proof}

A \emph{cyclic-trace} is a family of linear functions
$\{h_V:\End_\cat(V)\rightarrow \FK \}_{V\in \ideal}$
such that if $U,V\in \ideal$ then for any morphisms $f:V\rightarrow U $ and $g:U\rightarrow V$  in $\cat$ we have 
$
h_V(g f)=h_U(f  g).$

\begin{theorem}
Let $\cat$ be an essentially small ribbon $\FK$--category and let
  $\ideal$ be an ideal of $\cat$.  Then 
$$
\Skein_{\ideal}(S^1\times[-1,1])^*
  \cong \{\text{cyclic-traces on } \ideal\}.
  $$
  \end{theorem}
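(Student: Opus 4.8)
The plan is to follow the proof of Theorem~\ref{T:DiskRmt} as closely as possible, replacing the right--trace structure used there by the weaker cyclicity. The reason only cyclicity survives is geometric: in the annulus $A:=S^1\times[-1,1]$ there is no room for the partial--trace step, since an edge wrapping the core circle $S^1\times\{0\}$ cannot be contracted inside $A$, whereas the corresponding move in $D^2$ is an isotopy. For the easy direction, given $T\in\Skein_\ideal(A)^*$ one sets $h^T_V(f):=T(O_f)$ for $V\in\ideal$ and $f\in\End_\cat(V)$, where $O_f$ is the closure of the coupon colored by $f$ around the core circle. Linearity of each $h^T_V$ follows from the $\ideal$--skein relation that splits a coupon colored by $af+bf'$ (the box is a small neighborhood of the coupon, and the wrapping edge, colored by $V\in\ideal$, lies outside it), and cyclicity follows because $O_{gf}$ and $O_{fg}$ are related by the isotopy of $A$ that carries one of two stacked coupons once around the core, exactly the picture appearing in the proof of Theorem~\ref{T:DiskRmt}. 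Hence $h^T$ is a cyclic--trace.

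For the converse, given a cyclic--trace $h$ I would build $F'_h\in\Skein_\ideal(A)^*$ as follows. A \emph{cutting arc} for an $\ideal$--admissible $\Gamma\subset A$ is a properly embedded arc $\gamma$ from $S^1\times\{-1\}$ to $S^1\times\{1\}$, transverse to $\Gamma$, avoiding all coupons, and meeting at least one $\ideal$--colored edge of $\Gamma$; such arcs exist precisely because $\Gamma$ is $\ideal$--admissible. Cutting $A$ along a tubular neighborhood of $\gamma$ turns it into a box $B_\gamma$ containing a ribbon graph $\Gamma_\gamma$ with $F(\Gamma_\gamma)\in\End_\cat(V_\gamma)$ and $V_\gamma\in\ideal$, and one sets $F'_h(\Gamma):=h_{V_\gamma}(F(\Gamma_\gamma))$. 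To prove independence of $\gamma$ I would first insert identity coupons along dual--colored edges so that all intersections with the cutting arcs are positive (as in Theorem~\ref{T:DiskRmt}), and then argue in two steps. Step~1 (disjoint cutting arcs $\gamma,\gamma'$): cutting $A$ along $\gamma\cup\gamma'$ yields \emph{two} boxes $B_1,B_2$, giving morphisms $a=F(\Gamma\cap B_1)\colon V_\gamma\to V_{\gamma'}$ and $b=F(\Gamma\cap B_2)\colon V_{\gamma'}\to V_\gamma$ with $F(\Gamma_\gamma)=ba$ and $F(\Gamma_{\gamma'})=ab$, so $h_{V_\gamma}(ba)=h_{V_{\gamma'}}(ab)$ by cyclicity of $h$ (both $V_\gamma,V_{\gamma'}\in\ideal$); note this uses only cyclicity, not the duality compatibility invoked in the disk case. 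Step~2: induct on $n=|\gamma\cap\gamma'|$, producing at each stage an intermediate cutting arc $\alpha$ — obtained by splicing an initial segment of $\gamma$ to a terminal segment of $\gamma'$ (or vice versa) at a carefully chosen crossing and smoothing the corner — with $|\alpha\cap\gamma|<n$ and $|\alpha\cap\gamma'|<n$, and conclude by the inductive hypothesis together with Step~1. Independence of $\gamma$ also yields isotopy invariance of $F'_h$.

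It remains to check that $F'_h$ descends to $\Skein_\ideal(A)$ and that the two constructions are mutually inverse. For the first, given an $\ideal$--skein relation supported in a box $B$, choose a cutting arc $\gamma$ disjoint from $B$ crossing an $\ideal$--colored edge that leaves $B$; then the relation becomes a linear identity $\sum_i a_i F\big((\Gamma_i)_\gamma\big)=0$ in $\End_\cat(V_\gamma)$, which $h_{V_\gamma}$ annihilates. For the second, $F'_h(O_f)=h_V(f)$ (cut along an arc crossing the loop once), and conversely every $\ideal$--admissible $\Gamma$ is $\ideal$--skein equivalent to $O_{F(\Gamma_\gamma)}$ (bundle the strands crossing $\gamma$ into a single band, then reduce the resulting tangle to one coupon by a skein relation), whence $F'_{h^T}(\Gamma)=h^T_{V_\gamma}(F(\Gamma_\gamma))=T\big(O_{F(\Gamma_\gamma)}\big)=T(\Gamma)$; linearity in $h$ is clear, so these maps give the claimed isomorphism. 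The main obstacle I foresee is Step~2: because a cutting arc must join the two boundary circles of $A$, the device used in Theorem~\ref{T:DiskRmt} of truncating $\gamma$ at its first $\ideal$--colored crossing does not directly produce an admissible intermediate arc, so one must choose the splicing crossing so as to decrease the intersection numbers with both $\gamma$ and $\gamma'$ while keeping an $\ideal$--colored edge on $\alpha$.
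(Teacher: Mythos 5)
Your easy direction and your Step~1 are exactly right, and Step~1 is in fact the whole of the paper's independence argument: the paper defines a ``cutting arc'' to be a \emph{radius} $\theta\times[-1,1]$ of the product structure (after isotoping $\Gamma$ so that such a radius is transverse to $\Gamma$, misses the coupons and meets an $\ideal$--colored edge). Since any two distinct radii are disjoint, cutting along both gives $f=f_2\circ f_1$ and $f'=f_1\circ f_2$ and cyclicity alone finishes; no analogue of Step~2 is ever needed. By allowing arbitrary properly embedded arcs from one boundary circle to the other, you have created an induction on intersection number that the paper avoids, and that step is where your proposal has a genuine gap -- one you yourself flag but do not close. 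Concretely: the spliced arc $\alpha$ (initial segment of $\gamma$ glued to a terminal segment of $\gamma'$ at a chosen crossing) need not meet any $\ideal$--colored edge of $\Gamma$, so it need not be a cutting arc at all, and you give no recipe for choosing the crossing so that both $|\alpha\cap\gamma|<n$ and $|\alpha\cap\gamma'|<n$ hold simultaneously. The device of Theorem~\ref{T:DiskRmt} (truncate $\gamma$ just after its first $\ideal$--colored crossing) is unavailable here, as you note, because a cutting arc must run from $S^1\times\{-1\}$ to $S^1\times\{1\}$. Nor can you sidestep the induction by finding a third arc disjoint from both: two disjoint essential arcs in the annulus are isotopic, so if $\gamma$ and $\gamma'$ have different winding numbers no arc is disjoint from both, and intersecting pairs genuinely must be handled if general arcs are admitted.

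The repair is precisely the paper's restriction: define the functional only via radial cutting arcs (whose existence, up to isotopy of $\Gamma$, is immediate from admissibility), prove independence by your Step~1, and then handle skein relations as you do, by choosing a radius avoiding the box and crossing an $\ideal$--colored edge that leaves it. With that change the rest of your argument -- linearity, cyclicity of $h^T$ via the isotopy carrying one coupon around the core, vanishing on skein relations, and the mutual-inverse check $F'_{h^T}(\Gamma)=T(\Gamma)$ by fusing the strands crossing the radius into a single $V_\gamma$-colored band and absorbing the complement into one coupon -- matches the paper (indeed your verification that the two constructions are inverse is more explicit than the paper's). As written, however, Step~2 is an unproved claim on which the well-definedness of $F'_h$ rests, so the proposal is not complete.
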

\begin{proof}
Given a $T\in \Skein_{\ideal}(S^1\times[-1,1])^*$  then define the family of linear functions $\{h^T_V:\End_\cat(V)\to \FK\}_{V\in \ideal}$ as follows.  For $f\in \End_\cat(V)$ let $\mt^T_V(f)=T(O_f)$ where $O_f$ is the ribbon graph which is the right closure in the annulus of the coupon colored with $f$.   Arguing as in the proof of Theorem \ref{T:DiskRmt} this family is a cyclic-trace.  

Conversely, given a cyclic-trace $h$ on $\ideal$ we can define an
element of $F'_h\in\Skein_\ideal(S^1\times[-1,1])^*$ as follows.  Let $\Gamma$
be an $\ideal$--admissible ribbon graph in $S^1\times[-1,1]$.  We say a radius is an arc in $S^1\times[-1,1]$ of the form $\theta \times [-1,1]$ such that 1) it intersects $\Gamma$ transversally, 2) it does not intersect any coupons of $\Gamma$ and 3) it intersects at least one $\ideal$-colored edge of $\Gamma$.  Up to isotopy of $\Gamma$ it is easy to see that such radius exist.  

To each radius we can associate an object $V\in \ideal$ and a morphism $f:V\to V$ as follows.  Starting at $(\theta,-1)$ and moving along the radius we can define $V$ as the tensor product of the colors of the edges of $\Gamma$ intersecting the radius or it dual depending on the sign of the intersection.  The intersection of $\Gamma$ with the complement of a neighborhood of the radius represents, via the R-T functor, and endomorphism $f:V\to V$.  Define $F'_h(\Gamma)=h(f)$.  

We claim that $F'_h(\Gamma)$ does not depend on the choice of the radius.  Indeed, if $\theta' \times [-1,1]$ is another radius then let $V'$ and $f'$ be its associated color and morphism.  Cutting as above both radii we can construct two morphisms:  $f_1:V\to V'$ and $f_2:V'\to V$ such that $f=f_2\circ f_1$ and $f'=f_1\circ f_2$.  Then by the cyclicity property $h(f)=h(f')$.  This proves the claim.

Finally, if $\Gamma$ and $\Gamma'$ are skein equivalent then we will show that $F'_h(\Gamma)=F'_h(\Gamma')$.  Since we consider $\ideal$-skein relations, up to isotopy we can assume the box supporting the skein relation does not intersect a common radius for both  $\Gamma$ and $\Gamma'$.  Therefore, the morphisms $f$ and $f'$ associated to $\Gamma$ and $\Gamma'$ by that radius are equal by definition of the skein relation.  This proves the claim.  
\end{proof}
\begin{remark}
The skein module of $\Skein_{\ideal}(S^1\times[-1,1])$ has a natural algebra structure induced by the embedding of two copies of $S^1\times[-1,1]$ into $S^1\times [-1,1]$ as $S^1\times[-1,0]$ and $S^1\times [0,1]$. This algebra is in general not unital unless $\ideal=\cat$, in which case the empty skein represents the unit. 
\end{remark}

\section{Higher genus surfaces and handlebody graphs invariants}

\subsection{Handlebodies and linear functionals on skein modules}

Let $\cat$ be a pivotal $\FK$--category and $\ideal$ an ideal in $\cat$.    An m-trace $\mt$ is  \emph{non-degenerate} if for any object
$P\in\ideal$, the pairing 
$$
\Hom_\cat(\unit, P) \times \Hom_\cat(P,\unit) \to \FK , \ \   (x,y)\mapsto \mt_P(xy)
$$
 is non-degenerate.  
For such an m-trace, pick a basis $\{x_i\}$ of $\Hom_\cat(\unit, P)$ and  let $\{y_i\}$ be the dual basis of
$\Hom_\cat(P,\unit)$ with respect to the above pairing.

By a \emph{multi-handlebody} we mean a disjoint union of a finite number of oriented 3-dimensional handlebodies.  
An $\ideal$--admissible ribbon graph $\Gamma$ on a multi-handlebody $H=\sqcup H_j$ is a $\cat$-colored ribbon graph in $\partial H=\sqcup \partial H_j$ such that for each $j$ we have $\partial H_j\cap \Gamma$ is  a non-empty $\ideal$--admissible ribbon graph in the boundary of the handlebody $H_j$.  
Let 
$ \Hp$ be the class of all pairs $(H,\Gamma)$ where $H$ is a multi-handlebody and $\Gamma$ is a linear combination of  $\ideal$--admissible ribbon graph on  $H$. 
   In what follows we extend the
colorings of coupons multilinearly.  In particular, this allows us to color an
ordered matching pair of coupons with the co-pairing $ \sum_i x_i\otimes_\kk y_i \in\Hom_\cat(\unit,P) \otimes_\kk \Hom_\cat(P, \unit ) $, which does not depend on the choice of basis:
\begin{equation}\label{E:OmegaDefMuilt}
\epsh{fig101}{20ex}=\sum_i\epsh{fig102}{20ex}\put(-95,-2){\ms{x_i}}\put(-39,6){\ms{y_i}}\;.
\end{equation}

We now  define a cutting operation  on colored graphs.
Let $(H,\Gamma)\in \Hp$ and let $D\subset H$ be an oriented properly embedded
disk  whose boundary
$\partial D \subset \partial H$ does not  meet  the coupons of $\Gamma$ and  intersects the edges of $\Gamma$
  transversely in a  non-empty set which contains at least one edge colored with an object $V$ which is in $\ideal$.  
Cutting $(H,\Gamma)$ along~$D$ we obtain  a new multi-handlebody graph
$(\cut_D (H),\cut_D(\Gamma)) \in \Hp$ where $\cut_D(\Gamma)$ is obtained by  cutting the edges of
$\Gamma$ intersecting $\partial D$  and then joining the cut points into two new coupons in $\partial (\cut_D (H))$  (one  on each side of the
cut). The coupons are  colored  as in Formula \eqref{E:OmegaDefMuilt}, see the following figure:
\begin{equation*}\label{E:CuttingPropF'}
  \epsh{fig6c}{9ex}\hspace{5pt} \longrightarrow \hspace{5pt} \epsh{fig6d}{9ex}
  \put(-65,6){\mbox{\textcircled{\ms{1'}}}}
    \put(-26,-6.5){\mbox{\textcircled{\ms{ 1}}}}\,.
\end{equation*}
Note that since $D$ intersects at least one $\ideal$-colored edge of $\Gamma$ then $\cut_D (H)$ is an element of $\Hp$. Moreover, $H$ and $\cut_D (H)$ can have different numbers of connected components and the orientation of $\cut_D (H)$ is induced by the one of $H$. The cutting of $(H, \Gamma)$ at~$D$ is not determined by these data uniquely as it  depends on the order in the set $\Gamma \cap \partial D$ compatible with the cyclic order in the oriented circle~$\partial D$.

The following theorem is a straight forward generalization of \cite[Theorem 2.1]{CGPT}  to $\ideal$--admissible ribbon graphs (in \cite[Theorem 2.1]{CGPT} all colors of a graph are required to be in $\ideal$).  
\begin{theorem}\label{P:F'onProjHandlebodies}
Let $\cat$ be a pivotal $\FK$-category equipped with an ideal $\ideal$ in $\cat$  and a non-degenerate m-trace $\mt$ on   $\ideal$.
Then there exists a unique mapping $F'_\mt:\Hp\to \FK$ 
satisfying the following four conditions.
\begin{enumerate}
\item 
$F'_\mt(H,\Gamma)\in \FK$  depends only on the orientation preserving diffeomorphism class of  $(H,\Gamma)\in \Hp$.
 
\item For any $\ideal$--admissible ribbon graph $(B^3,\Gamma)$ 
on the 3-ball $B^3$, we have
$
F'_\mt(B^3,\Gamma)=F'_{\mt}(\Gamma)
$ where $F'_{\mt}(\Gamma) $ is the value of the linear functional on $\Skein_\ideal(S^2)$ corresponding to the m-trace $\mt$, via Theorem~\ref{T:DiskRmt}.

\item For any $(H_1,\Gamma_1), (H_2,\Gamma_2)\in \Hp$
we have $F'_\mt(H_1\sqcup H_2,\Gamma_1 \sqcup \Gamma_2)=F'_\mt(H_1,\Gamma_1) F'_\mt(H_2,\Gamma_2).$

\item Cutting any $(H,\Gamma) \in \Hp$ along a 2-disk~$D$ as above, we have 
  $F'_\mt(\cut_D(H),\cut_D(\Gamma))=F'_\mt(H,\Gamma).$
\end{enumerate}
\end{theorem}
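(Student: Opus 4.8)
The plan is to prove existence and uniqueness separately, with existence being the substantive part. For uniqueness, observe that conditions (2)--(4) pin down the value of $F'_\mt$ on every $(H,\Gamma)\in\Hp$: by condition (3) we may assume $H$ is connected, i.e.\ a single handlebody of some genus $g$. If $g=0$ then $H\cong B^3$ and (2) applies, using Theorem~\ref{T:DiskRmt} together with Corollary~\ref{T:dim3SphereSkein} to identify $\Skein_\ideal(B^3)^*$ with m-traces. If $g>0$, pick a non-separating meridian disk $D$ of $H$ whose boundary crosses the graph $\Gamma$ in a set containing an $\ideal$-colored edge (such a $D$ exists after an isotopy of $\Gamma$ because $\Gamma$ is $\ideal$-admissible and every edge can be slid to cross a chosen meridian, after inserting identity coupons and using Proposition~\ref{prop:ddd}); cutting along $D$ lowers the genus, so condition (4) expresses $F'_\mt(H,\Gamma)$ in terms of a lower-genus handlebody. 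By induction the value is determined. This shows uniqueness.

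For existence I would define $F'_\mt(H,\Gamma)$ directly by the cutting recipe and then check the four conditions. Concretely: given a connected handlebody $H$ of genus $g$, choose a system of $g$ disjoint meridian disks $D_1,\dots,D_g$ each of whose boundary meets $\Gamma$ transversally in a nonempty set containing an $\ideal$-colored edge (again available by an isotopy of $\Gamma$, inserting identity coupons and using $\ideal$-admissibility together with Proposition~\ref{prop:ddd} to arrange $\ideal$-colored edges along each meridian). Cutting along all of them yields a ball $B^3$ with an $\ideal$-admissible graph $\cut_{D_1,\dots,D_g}(\Gamma)$, whose coupons created by the cuts are colored by the co-pairings $\sum_i x_i\otimes_\kk y_i$ of Formula~\eqref{E:OmegaDefMuilt}; set
$$
F'_\mt(H,\Gamma):=F'_\mt\bigl(\cut_{D_1,\dots,D_g}(\Gamma)\bigr),
$$
the right-hand side being the linear functional on $\Skein_\ideal(S^2)$ from Theorem~\ref{T:DiskRmt} applied to the resulting spherical graph, and extend to disconnected $H$ by multiplicativity so that (3) holds by fiat. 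Conditions (1) and (2) are then immediate once well-definedness is established, and (4) follows because cutting along one more disk $D$ and then along a compatible meridian system of the result realizes the same sphere graph.

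The main obstacle is well-definedness: showing the value is independent of the chosen meridian system and of the cut orders. Here I would invoke the analogy with \cite[Theorem 2.1]{CGPT}, whose proof handles exactly this for $\cat$-colored graphs with all colors in $\ideal$; the point is that any two systems of meridian disks for a genus-$g$ handlebody are related by a finite sequence of disk slides, disk isotopies, handle slides, and the introduction/cancellation of a pair of disks bounding a ball, and one checks each such move leaves the associated element of $\Skein_\ideal(S^2)^*$ unchanged. The move-invariance reduces, via Theorem~\ref{T:DiskRmt}, to identities for the m-trace $\mt$: the cyclicity and partial-trace properties, the fact that the co-pairing $\sum_i x_i\otimes_\kk y_i$ is basis-independent, and the non-degeneracy of $\mt$ which guarantees the "zig-zag'' identity $\sum_i x_i\,\mt_P(y_i\,z)=z$ for $z\in\Hom_\cat(\unit,P)$ (used when sliding a coupon across a cut). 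Independence of the order within a single $\Gamma\cap\partial D$ follows because a cyclic reordering of the edges met by $\partial D$ corresponds to an isotopy of $\Gamma$ in $H$ and to a cyclic permutation inside a single $\ideal$-colored composite object, under which $\mt$ is invariant by cyclicity. The only genuinely new point beyond \cite{CGPT} is the bookkeeping that a meridian can always be chosen to meet an $\ideal$-colored edge so that every intermediate cut stays inside $\Hp$, which is exactly what $\ideal$-admissibility plus Proposition~\ref{prop:ddd} provide; once that is in place the argument of \cite[Theorem 2.1]{CGPT} goes through verbatim.
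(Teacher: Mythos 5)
Your proposal is correct and follows essentially the same route as the paper: the paper likewise grounds the genus-zero case in Theorem~\ref{T:DiskRmt}, proceeds by induction on genus via cutting disks, and defers the move-invariance of the cut presentation word for word to \cite[Theorem 2.1]{CGPT}, the only genuinely new ingredient being (as you note) that the cutting disks can always be isotoped to meet an $\ideal$-colored edge so that every intermediate piece stays in $\Hp$. (One cosmetic slip: the non-degeneracy identity should read $\sum_i \mt_P(z\,y_i)\,x_i=z$, since $y_i\circ z\in\End_\cat(\unit)$ is not in the domain of $\mt_P$.)
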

\begin{proof}
The proof follows by induction on the genus of $H$ and after a few very small replacements follows word for word as in \cite{CGPT}.  In particular, the induction base concerns graphs on disjoint unions of 3-balls.  Conditions (1) and (2) in Theorem~\ref{P:F'onProjHandlebodies} define $F'_\mt$ uniquely for $\ideal$--admissible graphs   on the boundary of a topological 3-ball.  Condition (3) extends 
  $F'_\mt$ uniquely to $\ideal$--admissible graphs on disjoint unions of 3-balls.  
  The proof that Condition (4) holds for an 
  $\ideal$--admissible graph $(B^3,\Gamma)\in \Hp$ on the boundary of a 3-ball $B^3$ follows word for word (after only considering disks $D$ that intersect at least one $\ideal$-colored edge of $\Gamma$ and replacing the words ``$\ideal$--colored graph'' with ``$\ideal$--admissible graph'') as in the proof of \cite[Theorem 2.1]{CGPT}.  Moreover, after a similar replacement the induction step follows word for word as in the proof of \cite[Theorem 2.1]{CGPT}.  (Note that up to an isotopy of $\Gamma$ one can always assume that any cutting disk intersects and $\ideal$--colored edge.)
\end{proof}

Next we state a purification process which produces a non-degenerate m-trace from a m-trace:
\begin{theorem}\label{T:purSimplifiedVersion}
  Let $\mt$ be a m-trace on an ideal $\ideal$.  Let  $\pi_\mt : \cat \to \cat/\wa\Nideal_{\mt}$ be the canonical
  monoidal functor (see the definition of a tensor ideal of morphisms and Lemma \ref{L:QuCatIdealMor}, below) where
  $$\wa\Nideal_{\mt} =\{f \in \Hom_\cat(V, W) \ | \  \mt_U(g_1\circ
  f\circ g_2)=0, \; \forall
  U\in\ideal,\forall g_1:W\to U,\forall g_2:U\to V\}_{ V,W\in \cat}$$
  is the kernel  of $\mt$.
  Let $\Jideal$ be the ideal of $\cat/\wa\Nideal_{\mt}$ generated by
  $\pi_\mt(\ideal)$. Then there is a unique non-degenerate m-trace
  $\wb\mt$ on $\Jideal$ such that $\mt=\wb\mt\circ\pi_\mt$.
\end{theorem}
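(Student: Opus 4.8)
The plan is to treat this as a standard quotient-category construction with careful bookkeeping of the ideal $\ideal$ and the m-trace. First I would verify that $\wa\Nideal_\mt$ is genuinely a tensor ideal of morphisms in the sense referenced (closed under composition on both sides by arbitrary morphisms, and under tensoring by identities): closure under left/right composition is immediate from the definition, since precomposing or postcomposing $f$ by $h$ just modifies the $g_1$ or $g_2$; closure under $\otimes\Id_X$ follows by using the partial-trace properties of $\mt$ to reduce a test morphism $g_1\circ(f\otimes\Id_X)\circ g_2$ through $U\in\ideal$ to a test morphism for $f$ itself through an object of $\ideal$ (here one uses that $\ideal$ is closed under tensor product, so $U\otimes X^*$ etc.\ stay in $\ideal$). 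This legitimizes forming $\cat/\wa\Nideal_\mt$ and the monoidal functor $\pi_\mt$ via Lemma~\ref{L:QuCatIdealMor}.

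Next I would define $\wb\mt$ on $\Jideal$. The natural candidate: for $\bar V = \pi_\mt(V)$ with $V\in\ideal$ and $\bar f\in\End_{\cat/\wa\Nideal_\mt}(\bar V)$ lift $\bar f$ to some $f\in\End_\cat(V)$ and set $\wb\mt_{\bar V}(\bar f)=\mt_V(f)$; well-definedness is exactly the statement that $f-f'\in\wa\Nideal_\mt$ implies $\mt_V(f)=\mt_V(f')$, which holds because $\mt_V(\text{–})$ is itself one of the test functionals (take $U=V$, $g_1=g_2=\Id_V$) appearing in the definition of $\wa\Nideal_\mt$. The subtlety is that a general object of $\Jideal$ is only a retract of $\bar V\otimes \bar W$ for $V\in\ideal$, $W\in\cat$, not literally $\pi_\mt(V)$; so I would instead define $\wb\mt$ on all of $\Jideal$ by the usual extension recipe (an m-trace on an ideal is determined by its values on a generating object together with the partial-trace and retract axioms, cf.\ the uniqueness half of \cite{GPV} or the discussion of ideals generated by objects in Remark~\ref{rk:ideal-gen}), and then check the m-trace axioms (cyclicity, left and right partial trace) for $\wb\mt$ by lifting morphisms along $\pi_\mt$ and invoking the corresponding axioms for $\mt$. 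Compatibility $\mt=\wb\mt\circ\pi_\mt$ holds on $\ideal$ by construction, and uniqueness of $\wb\mt$ follows since its value on the generators of $\Jideal$ is forced by that compatibility and the m-trace axioms then propagate it uniquely to all of $\Jideal$.

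The main obstacle I expect is non-degeneracy of $\wb\mt$, which is the whole point of the ``purification''. For $\bar P\in\Jideal$ I must show the pairing $\Hom_{\cat/\wa\Nideal_\mt}(\unit,\bar P)\times\Hom_{\cat/\wa\Nideal_\mt}(\bar P,\unit)\to\FK$, $(\bar x,\bar y)\mapsto\wb\mt_{\bar P}(\bar x\bar y)$, has trivial left and right kernels. Suppose $\bar x$ pairs to zero with every $\bar y$. Lifting, I get $x\colon\unit\to P$ (for a lift $P$; here I again reduce to the case $P$ a retract of $V\otimes W$, $V\in\ideal$) with $\mt_P(x y)=0$ for all $y\colon P\to\unit$. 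I want to conclude $x\in\wa\Nideal_\mt$, i.e.\ that $x$ vanishes against \emph{all} test data $g_1\circ x\circ g_2$ with $g_2\colon U\to\unit$... wait — $x$ has source $\unit$, so the relevant test data are $\mt_U(g_1\circ x\circ g_2)$ with $g_2\colon U\to\unit$ and $g_1\colon P\to U$, i.e.\ $\mt_U\big((g_1 x)\,g_2\big)=\mt_{\unit}?$ — by cyclicity this equals $\mt_P\big(x\,g_2\,g_1\big)$ with $g_2 g_1\colon P\to\unit\to\unit$... this needs a little care: I would use cyclicity to rewrite every such test quantity as $\mt_P(x\cdot y')$ for a suitable $y'\colon P\to\unit$ (absorbing $g_1,g_2$), so that vanishing against all $y$ forces $x\in\wa\Nideal_\mt$, hence $\bar x=0$. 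The symmetric argument on the other side, plus the reduction from an arbitrary $\bar P\in\Jideal$ to the generating objects via the retract and tensor structure (using that $\mt$, and hence $\wb\mt$, already satisfies the partial trace properties so that non-degeneracy for $V\otimes W$ reduces to non-degeneracy for $V\in\ideal$, where it is essentially automatic after purification), completes the argument. I would flag the retract reduction as the place where one must be slightly careful that the Hom-spaces and the pairing behave well under splitting idempotents in the quotient category.
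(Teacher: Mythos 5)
Your proposal is correct and follows essentially the same route as the paper: show $\wa\Nideal_{\mt}$ is a tensor ideal of morphisms (the tensor part via partial traces), define $\wb\mt$ on $\pi_\mt(\ideal)$ by lifting, extend to retracts, and obtain non-degeneracy by lifting a nonzero morphism to one outside the kernel and absorbing the test morphisms by cyclicity. The one step you defer to a ``usual extension recipe'' is precisely where the paper's proof does its real work: a general object of $\Jideal$ is some $\pi_\mt(W)$ whose splitting off $\pi_\mt(V)$, $V\in\ideal$, lifts only to $pi=\Id_W+n$ with $n\in\wa\Nideal_{\mt}(W,W)$, and one must check that $\wb\mt_{\pi_\mt(W)}(\wb f):=\mt_V(ifp)$ is independent of all choices $(V,i,p,f)$ --- a short cyclicity/kernel computation that does go through (note GPV's single-object extension theorem and Remark~\ref{rk:ideal-gen} do not literally cover this), so this is missing detail rather than a genuine gap.
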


We prove Theorem \ref{T:purSimplifiedVersion} in Subsection \ref{SS:TensorIdealsMor}.   Note from Proposition \ref{P:FunctorIndSkein} the functor $\pi_\mt$ induces a map $(\pi_\mt)_*$ on the level of skein modules.

\begin{theorem}\label{T:ActionOfHandelbody} Let $\cat$ be an essentially small pivotal $\FK$--category and let
  $\ideal$ be an ideal of $\cat$.  Let $\Sigma$ be a
  surface and $\mathcal{H}_\Sigma$ be the class of multi-handlebodies
  $H$ with $\partial H=\Sigma$.  Then there exist a map
  $$\Psi:\{\text{m-traces on } \ideal\} \times \mathcal{H}_\Sigma \to \Skein_{\ideal}(\Sigma)^* \text{ given by } \Psi(\mt,H) = \Big(\Gamma \mapsto F'_{\wb\mt} \big(H,(\pi_\mt)_*(\Gamma)\big) \Big) $$
 where $\pi_\mt$ and $\wb\mt$ are given in Theorem \ref{T:purSimplifiedVersion} and 
 $F'_{\wb\mt}$ is the invariant of Theorem \ref{P:F'onProjHandlebodies}.
 \end{theorem}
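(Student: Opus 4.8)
The plan is to show first that $F'_{\wb\mt}\big(H,(\pi_\mt)_*(\Gamma)\big)$ is a well-defined scalar depending linearly on $\Gamma$, and then that it descends from the span of $\ideal$--admissible graphs to $\Skein_\ideal(\Sigma)$, i.e.\ vanishes on $\ideal$--skein relations. For the first point, note that $\pi_\mt$ induces a natural transformation $(\pi_\mt)_*\colon\Skein_\ideal\to\Skein_\Jideal$ by Proposition~\ref{P:FunctorIndSkein}, and that $\wb\mt$ is a \emph{non-degenerate} m-trace on $\Jideal$ by Theorem~\ref{T:purSimplifiedVersion}, so the handlebody invariant $F'_{\wb\mt}$ of Theorem~\ref{P:F'onProjHandlebodies} is at our disposal. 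If $\Gamma$ is an $\ideal$--admissible ribbon graph in $\Sigma=\partial H=\bigsqcup_j\partial H_j$, then every component $\partial H_j$ carries an $\ideal$--colored edge of $\Gamma$, hence a $\Jideal$--colored edge of $(\pi_\mt)_*\Gamma$ (because $\pi_\mt(\ideal)\subset\Jideal$); thus $(H,(\pi_\mt)_*\Gamma)$ lies in the domain of $F'_{\wb\mt}$, and $\Psi(\mt,H)(\Gamma)\in\FK$ is defined. Linearity is immediate since coupon colorings are extended multilinearly and $(\pi_\mt)_*$ is linear.

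To establish the vanishing on $\ideal$--skein relations I would first reduce to a statement about $\Jideal$. Let $\sum_ia_i\Gamma_i$ be an $\ideal$--skein relation with box $B$ and transverse representatives $\Gamma'_i$ agreeing outside $B$. Applying $\pi_\mt$ and using $F_\catd\circ(\pi_\mt)_*=\pi_\mt\circ F_\cat$ (Proposition~\ref{P:FunctorIndSkein}): the graphs $(\pi_\mt)_*\Gamma'_i$ are transverse to $B$, agree outside $B$, satisfy $\sum_ia_iF_\catd\big((\pi_\mt)_*\Gamma'_i\cap B\big)=\pi_\mt\big(\sum_ia_iF_\cat(\Gamma'_i\cap B)\big)=0$, and each has a $\Jideal$--colored edge not completely inside $B$. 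Hence $\sum_ia_i(\pi_\mt)_*\Gamma_i$ is a $\Jideal$--skein relation, and it suffices to prove that $F'_{\wb\mt}(H,-)$ vanishes on $\Jideal$--skein relations in $\partial H$.

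The heart of the proof is a cutting computation. Fix a $\Jideal$--skein relation supported in a box $B\subset\partial H$ with representatives $\Gamma'_i$ agreeing outside $B$. I would compute $F'_{\wb\mt}(H,\Gamma'_i)$ by repeatedly invoking condition~(4) of Theorem~\ref{P:F'onProjHandlebodies}, cutting $H$ along a system of properly embedded disks until only a disjoint union of $3$--balls remains, and then evaluating via conditions~(2) and~(3). The key is to choose this cutting system \emph{with all disk boundaries disjoint from $B$}: because the $\Gamma'_i$ coincide outside $B$, one and the same system then serves for every $i$, and cutting modifies the graphs only in a way that is identical, for all $i$, outside $B$. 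On the distinguished ball $B^3_\ast$ of the cut manifold containing $B$, the graphs $\cut(\Gamma'_i)$ still agree outside $B$, still satisfy $\sum_ia_iF\big(\cut(\Gamma'_i)\cap B\big)=\sum_ia_iF(\Gamma'_i\cap B)=0$, and still carry a $\Jideal$--colored edge outside $B$; so their combination restricts on $\partial B^3_\ast\cong S^2$ to a genuine $\Jideal$--skein relation, which is annihilated by the sphere skein functional by Theorem~\ref{T:DiskRmt}. The factors coming from the remaining balls are independent of $i$, so condition~(3) yields $\sum_ia_iF'_{\wb\mt}(H,\Gamma'_i)=0$, as needed.

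The main obstacle is the topological bookkeeping hidden in the previous paragraph: one must choose the cutting disks so that they jointly cut $H$ to balls, so that each meets a $\Jideal$--colored edge (required to apply condition~(4)), so that their boundaries avoid $B$, and so that $B^3_\ast$ inherits a $\Jideal$--colored edge outside $B$. Admissibility is what makes this possible: the definition of a $\Jideal$--skein relation supplies a $\Jideal$--colored edge of $\Gamma'_i$ not completely contained in $B$, and, after an isotopy of the common exterior of the $\Gamma'_i$ that fixes $B$, a portion of this edge can be routed through all the handles of the handlebody carrying $B$, so every compressing disk of a cutting system for that handlebody may be taken to cross it; once the first cut is made, the co-pairing coupons of~\eqref{E:OmegaDefMuilt} produce new $\Jideal$--colored edges that cover the subsequent cuts, and the other handlebody components are cut along the $\Jideal$--colored edges guaranteed by $\Jideal$--admissibility. (When $\partial H$ is disconnected and the distinguished $\Jideal$--colored edge of the relation lies off the component containing $B$, one first isolates that component using condition~(3).) Carrying out this isotopy-and-cutting argument carefully is the only real work; granting it, the computation above proves the theorem.
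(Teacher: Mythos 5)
Your proposal is correct and follows essentially the same route as the paper: push the relation forward to a $\Jideal$--skein relation via $\pi_\mt$, cut $H$ along disks whose boundaries avoid the box into $3$--balls, use multiplicativity and invariance under cutting from Theorem~\ref{P:F'onProjHandlebodies}, and annihilate the relation on the distinguished ball via the sphere functional of Theorem~\ref{T:DiskRmt}. You in fact spell out more of the topological bookkeeping (arranging each cutting disk to meet a $\Jideal$--colored edge and keeping a $\Jideal$--colored edge outside the box on the distinguished ball) than the paper, which handles these points only implicitly.
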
  
\begin{proof}
We need to prove that if $\Gamma$ and $\Gamma'$ are $\ideal$--skein equivalent $\ideal$--admissible ribbon graphs in $\Sigma$ then
  $F'_{\wb\mt} (H,(\pi_\mt)_*(\Gamma))=F'_{\wb\mt} (H,(\pi_\mt)_*(\Gamma'))$.  The $\ideal$--skein equivalence between $\Gamma$ and $\Gamma'$ induces a $\Jideal$--skein equivalence between $(\pi_\mt)_*(\Gamma)$ and $(\pi_\mt)_*(\Gamma')$.  It is enough to assume this equivalence is given by a single skein relation supported in a
  box $B$.   Using Theorem \ref{P:F'onProjHandlebodies}, to compute both $F'_{\wb\mt} (H,(\pi_\mt)_*(\Gamma))$ and $F'_{\wb\mt} (H,(\pi_\mt)_*(\Gamma'))$ we can choose disjoint 2-disks $D_1,\dots,D_n$ which cut $H$ into a disjoint union of 3-balls $B^3_1,\dots B^3_m$.  Let $(\cut(H),\cut(\Gamma))$ and (resp. $(\cut(H),\cut(\Gamma'))$) be the graph on a multi-handlebody constructed from cutting $(H,(\pi_\mt)_*(\Gamma))$ (resp. $(H,(\pi_\mt)_*(\Gamma'))$) along the disks $D_1,\dots,D_n$ (here when cutting both $(H,(\pi_\mt)_*(\Gamma))$ and $(H,(\pi_\mt)_*(\Gamma'))$ we use the same basis and dual basis to fill the coupons).  Up to isotopy we can assume that the box $B$ does not intersect any of the boundaries of the disks $D_1,\dots,D_n$.   Then the box $B$ is contained in the boundary of exactly one of the balls $B^3_1,\dots B^3_m$, up to reordering say it is contained in $B^3_1$.  Then since $(\pi_\mt)_*(\Gamma)$ and  $(\pi_\mt)_*(\Gamma')$ are equal outside the box $B$ we have  $\cut(\Gamma) \cap B^3_i=\cut(\Gamma' )\cap B^3_i$ for all $i\neq 1$. Moreover, $\cut(\Gamma) \cap B^3_1$ and $\cut(\Gamma' )\cap B^3_1$ are $\Jideal$--skein equivalent, via the skein relation in $B$ and so Theorem \ref{T:DiskRmt} implies $F'_{\wb\mt}(\cut(\Gamma) \cap B^3_1)=F'_{\wb\mt}(\cut(\Gamma' )\cap B^3_1)$.  Thus, $F'_{\wb\mt}(\cut(H),\cut(\Gamma))=F'_{\wb\mt}(\cut(H),\cut(\Gamma'))$ completing the proof.  
  \end{proof}

\subsection{Purification and tensor ideals of morphisms}\label{SS:TensorIdealsMor}
Here we prove Theorem \ref{T:purSimplifiedVersion} by constructing a purification process  which is modeled
on usual semi-simplification construction.  
We believe this process will be useful outside this paper; in particular, in constructing relative modular categories, see \cite{AGP}.

Let $\cat$ be a pivotal $\FK$-category.
  A \emph{tensor ideal of morphisms} $\wa\Nideal$ in $\cat$ is a family of $\FK$--submodules
  $\wa\Nideal(V,W)\subset \Hom_\cat(V,W)$ which is closed under
  composition and tensor products with any morphisms in $\cat$,  where $V$ and $W$ run over all objects in $\cat$.
Tensor ideals are studied in \cite{HW2022} but our points of view are somewhat orthogonal:  \cite{HW2022} considers filtrations 
by an ideal of the ground ring whereas we work with linear categories
over a field $\FK$.
It would be interesting to understand how the two points of view are related.

\begin{lemma}
 The family $\wa\Nideal_{\mt}$ defined in Theorem~\ref{T:purSimplifiedVersion} is a tensor ideal of morphisms.
\end{lemma}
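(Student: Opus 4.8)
The plan is to verify directly that $\wa\Nideal_{\mt}$ is closed under the three operations required of a tensor ideal of morphisms: composition on either side by arbitrary morphisms of $\cat$, and tensoring on either side by arbitrary morphisms of $\cat$. Since each $\wa\Nideal_{\mt}(V,W)$ is visibly a $\FK$--submodule of $\Hom_\cat(V,W)$ (the defining condition $\mt_U(g_1\circ f\circ g_2)=0$ is linear in $f$), the content is entirely in these closure properties.

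\begin{proof}
Fix $f\in \wa\Nideal_{\mt}(V,W)$, so $\mt_U(g_1\circ f\circ g_2)=0$ for all $U\in\ideal$, all $g_1\colon W\to U$ and all $g_2\colon U\to V$. For closure under composition, let $h\colon W\to W'$ and $k\colon V'\to V$ be arbitrary morphisms of $\cat$; we must show $h\circ f\in \wa\Nideal_{\mt}(V,W')$ and $f\circ k\in \wa\Nideal_{\mt}(V',W)$. Given $U\in\ideal$, $g_1\colon W'\to U$, $g_2\colon U\to V$, we have $g_1\circ(h\circ f)\circ g_2=(g_1\circ h)\circ f\circ g_2$, and since $g_1\circ h\colon W\to U$ is a morphism of $\cat$, this equals $0$ by the defining property of $f$; hence $h\circ f\in\wa\Nideal_{\mt}$. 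The argument for $f\circ k$ is symmetric, absorbing $k$ into $g_2$.

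For closure under tensor product, let $\alpha\colon V_0\to W_0$ be an arbitrary morphism of $\cat$; we show $f\otimes\alpha\in \wa\Nideal_{\mt}(V\otimes V_0, W\otimes W_0)$, the case $\alpha\otimes f$ being analogous. Let $U\in\ideal$, $g_1\colon W\otimes W_0\to U$, $g_2\colon U\to V\otimes V_0$. Write $f\otimes\alpha=(f\otimes\Id_{W_0})\circ(\Id_V\otimes\alpha)$, so it suffices to treat $f\otimes\Id_{W_0}$ (the factor $\Id_V\otimes\alpha$ is a morphism of $\cat$ and gets absorbed into $g_2$ by the previous paragraph). Now the key point: since $U\in\ideal$, the partial trace property lets us reduce a $W_0$-strand. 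Precisely, set $h:=g_1\circ(f\otimes\Id_{W_0})\circ g_2\in\End_\cat(U)$ and note that $h=g_1\circ((f\circ g_2')\otimes\Id_{W_0})$ where we first isotope so that $g_2$ factors (after composing with coevaluation) through $V\otimes W_0$; more cleanly, observe that $g_1\circ(f\otimes\Id_{W_0})\circ g_2$ can be rewritten, using the duality morphisms of the pivotal structure, as $\qt_U$ applied to an endomorphism of $U$ that is expressed as $\tilde g_1\circ f\circ\tilde g_2$ for suitable $\tilde g_1\colon W\to U\otimes W_0^*$ \dots\ This is the step that needs care: the cleanest route is to use the cyclicity and right partial trace properties of the m-trace to move the $W_0$-strand around the closure, turning $\mt_U(g_1\circ(f\otimes\Id_{W_0})\circ g_2)$ into $\mt_{U\otimes W_0}$ (or $\mt_{U}$ of a partial trace) of an endomorphism of the form $\hat g_1\circ f\circ\hat g_2$ with $\hat g_1,\hat g_2$ morphisms of $\cat$ through an object of $\ideal$ (namely $U$ or $U\otimes W_0$, both in $\ideal$ since $\ideal$ is closed under tensor product). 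Such an expression vanishes because $f\in\wa\Nideal_{\mt}$, so $\mt_U(g_1\circ(f\otimes\Id_{W_0})\circ g_2)=0$, and therefore $f\otimes\alpha\in\wa\Nideal_{\mt}$.

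Combining the two paragraphs, $\wa\Nideal_{\mt}$ is closed under pre- and post-composition by arbitrary morphisms of $\cat$ and under tensoring on either side by arbitrary morphisms of $\cat$, so it is a tensor ideal of morphisms.
\end{proof}

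The main obstacle is the tensor-product closure in the middle paragraph: one has to juggle the duality morphisms of the pivotal structure together with the cyclicity and partial-trace axioms of the m-trace to rewrite $\mt_U\bigl(g_1\circ(f\otimes\Id_{W_0})\circ g_2\bigr)$ as an m-trace, on an object of $\ideal$, of an endomorphism factoring through $f$ in the form $(\cdot)\circ f\circ(\cdot)$; the composition-closure step is then what lets one absorb all the auxiliary morphisms and conclude vanishing. I would present this manipulation diagrammatically (via the graphical calculus $F$ of Section~\ref{SS:LinearCat}) rather than with a long formula, since the identity is a routine isotopy of a closed diagram once drawn.
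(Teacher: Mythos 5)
Your overall plan is the paper's plan: linearity is immediate, composition-closure follows by absorbing $h,k$ into the test morphisms, and tensoring by a general morphism reduces, via $f\otimes\alpha=(f\otimes\Id_{W_0})\circ(\Id_V\otimes\alpha)$, to tensoring by identities. But the one step that carries the actual content of the lemma --- showing $\mt_U\bigl(g_1\circ(f\otimes\Id_{W_0})\circ g_2\bigr)=0$ --- is not established in your middle paragraph: the first attempted rewriting is abandoned mid-sentence, the morphisms $\hat g_1,\hat g_2$ are never defined, and the auxiliary object you name ($U$ or $U\otimes W_0$) is not the right one. The correct auxiliary object must carry a dual: for a strand tensored on the right one lands in $U\otimes W_0^{*}$ (in the two-sided situation $\Id_{V'}\otimes f\otimes\Id_{V''}$ treated in the paper, in $(V')^{*}\otimes U\otimes(V'')^{*}$), which lies in $\ideal$ because $\ideal$ is closed under tensoring with arbitrary objects. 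Asserting that ``cyclicity and the partial trace property let one move the strand around the closure'' is a statement of intent, not a proof; since this manipulation is the entire nontrivial point, the argument as written has a genuine gap.

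The gap is filled by a short explicit computation, which is exactly what the paper does: given test morphisms $g_1:U\to V'\otimes V\otimes V''$ and $g_2:V'\otimes W\otimes V''\to U$, set $U'=(V')^{*}\otimes U\otimes(V'')^{*}\in\ideal$ and define
$g'_1=(\lev_{V'}\otimes\Id_V\otimes\rev_{V''})(\Id_{(V')^{*}}\otimes g_1\otimes\Id_{(V'')^{*}})$ and
$g'_2=(\Id_{(V')^{*}}\otimes g_2\otimes\Id_{(V'')^{*}})(\rcoev_{V'}\otimes\Id_W\otimes\lcoev_{V''})$;
then the zig-zag identities give
$\ptr_L^{(V')^{*}}\bigl(\ptr_R^{(V'')^{*}}(g'_1fg'_2)\bigr)=g_2(\Id_{V'}\otimes f\otimes\Id_{V''})g_1$,
and the two partial-trace axioms yield
$\mt_U\bigl(g_2(\Id_{V'}\otimes f\otimes\Id_{V''})g_1\bigr)=\mt_{U'}(g'_1fg'_2)=0$
since $f\in\wa\Nideal_{\mt}$ and $U'\in\ideal$. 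Note that cyclicity is not needed here, only the left and right partial-trace properties together with the duality identities; your one-strand-at-a-time reduction ($f\otimes\alpha$, then $\alpha\otimes f$) is perfectly compatible with this and uses the right (resp.\ left) partial-trace axiom for each side. With these explicit $g'_1,g'_2$ inserted, your argument becomes complete and coincides with the paper's proof.
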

\begin{proof}
  Let $f:V\to W$ be in $ \wa\Nideal_{\mt}$ and let $h_1:W\to W'$ and
  $h_2:V'\to V$ be morphisms in $\cat$.  For any $U\in\ideal$, if
  $g_1:U\to V'$ and $g_2:W'\to U$ are morphism then
  $\mt_U(g_1( h_1fh_2 )g_2)=\mt_U((g_1h_1)f(h_2 g_2))=0$.  So
  $h_1fh_2\in \wa\Nideal_{\mt}$.
Given morphisms $g_1:U\to V'\otimes V\otimes V''$ and
  $g_2:V'\otimes W\otimes V''\to U$  let
  $$g'_1=(\lev_{V'}\otimes \Id_V \otimes \rev_{V''})(\Id_{(V')^*}\otimes g_1\otimes \Id_{(V'')^*}), \;\;
  g'_2=(\Id_{(V')^*}\otimes g_2\otimes \Id_{(V'')^*})(\rcoev_{V'}\otimes \Id_W\otimes \lcoev_{V''})
  $$ be morphism $g'_1: U'\to V$ and $g'_2:W\to U'$ where $U'=(V')^*\otimes U\otimes (V'')^*\in\ideal$. 
 Then
  $$\mt_{U}(g_2(\Id_{V'}\otimes f\otimes
  \Id_{V''} )g_1)=\mt_{U}\big(\ptr^{(V')^*}_L\big(\ptr^{(V'')^*}_R(g'_1fg'_2)\big)\big)=\mt_{U'}(g'_1fg'_2)=0$$
  thus $\Id_{V'}\otimes f\otimes \Id_{V''}\in\wa\Nideal_{\mt}$.  Combining this with the first part of the proof implies that $\Nideal_{\mt}$ is closed tensor product with any morphisms. 
\end{proof}
\begin{lemma}\label{L:QuCatIdealMor}
  If $\wa\Nideal$ is a tensor ideal of
  morphism in a pivotal $\kk$--category $\cat$, then the category  $\cat/\wa\Nideal$ whose
  objects are those of $\cat$ and morphisms are
  $\Hom(V,W)/\wa\Nideal(V,W)$ is a
  pivotal $\kk$--category.
\end{lemma}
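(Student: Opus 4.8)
The plan is to verify, one layer of structure at a time, that $\cat/\wa\Nideal$ is a $\kk$--category, that it is strict monoidal, and that it is pivotal, in each case transporting the relevant data and axioms from $\cat$ along the obvious projection functor $\pi\colon\cat\to\cat/\wa\Nideal$ (the identity on objects, the canonical quotient map on each hom--space). First I would check the $\kk$--category axioms. Composition descends to the quotient hom--spaces because $\wa\Nideal$ is closed under composition with arbitrary morphisms on either side: if $f\equiv f'$ and $g\equiv g'$ modulo $\wa\Nideal$, then $g\circ f-g'\circ f'=g\circ(f-f')+(g-g')\circ f'\in\wa\Nideal$. Each $\Hom_{\cat/\wa\Nideal}(V,W)=\Hom_\cat(V,W)/\wa\Nideal(V,W)$ is a finite-dimensional $\kk$--vector space as a quotient of one, and $\kk$--bilinearity of composition is inherited from $\cat$. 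The canonical map $\kk\to\End_{\cat/\wa\Nideal}(\unit)$ is surjective because $\kk\to\End_\cat(\unit)$ is; it is an isomorphism precisely when $\wa\Nideal(\unit,\unit)=0$, which I would record as the only nontrivial case (otherwise $\Id_\unit\in\wa\Nideal$ forces $\Id_V=\Id_\unit\otimes\Id_V\in\wa\Nideal$ for every $V$ and the quotient collapses to the zero category). Thus $\pi$ is a $\kk$--linear functor which is the identity on objects and surjective on each hom--space.

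Next I would install the monoidal structure, keeping the same tensor product of objects and setting $[f]\otimes[g]:=[f\otimes g]$ on morphisms. This is exactly the point where the full strength of the tensor--ideal hypothesis is used (closure under tensoring with arbitrary morphisms, not merely under composition): if $f-f'\in\wa\Nideal$ then $f\otimes g-f'\otimes g=(f-f')\otimes g\in\wa\Nideal$, and symmetrically in the second slot, so $\otimes$ is well defined on morphisms; the interchange law and bilinearity then follow from those in $\cat$. Since $\cat$ is strict, the associativity and unit constraints are identities and pass to the quotient unchanged, so $\cat/\wa\Nideal$ is strict monoidal and $\pi$ is a strict monoidal functor.

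Finally I would transport the duality morphisms: declare $\lcoev_V,\lev_V,\rcoev_V,\rev_V$ in $\cat/\wa\Nideal$ to be the $\pi$--images of the corresponding morphisms of $\cat$. Each axiom of a pivotal category---the two zig--zag identities for the left duality, the two for the right duality, and the compatibility condition expressing that the left and right duals of a morphism coincide---is an equality between composites (under $\circ$ and $\otimes$) of morphisms of $\cat$. Applying $\pi$, which commutes with $\circ$ and $\otimes$ and sends the chosen duality data of $\cat$ onto that of $\cat/\wa\Nideal$, turns each such equality in $\cat$ into the corresponding equality in $\cat/\wa\Nideal$; hence $\cat/\wa\Nideal$ is pivotal. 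I expect no serious obstacle here: the only substantive step is the well--definedness of $\otimes$ on morphisms, which is where the tensor--ideal condition is genuinely needed, and the only place where the "$\kk$--category" hypotheses could fail is the $\End(\unit)=\kk$ condition, handled by the remark in the first paragraph; everything else is a formal consequence of $\pi$ being an identity--on--objects, full, strict monoidal functor.
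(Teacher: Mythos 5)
Your proof is correct and follows essentially the same route as the paper's (much terser) argument: well-definedness of composition and of the tensor product on the quotient is exactly what the tensor-ideal axioms give, and all pivotal identities descend along the identity-on-objects quotient functor. Your extra remark about the degenerate case $\wa\Nideal(\unit,\unit)\neq 0$, where the quotient collapses and the axiom $\End(\unit)\cong\kk$ fails, is a point the paper's proof passes over silently (and which is harmless in the intended application, where the ideal is the kernel of a nonzero m-trace).
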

\begin{proof}
  Since $\wa\Nideal$ is an ideal of morphisms, the composition and
  tensor product of morphisms in $\cat/\wa\Nideal$ is well defined and
  $\kk$-bilinear.  Then all identities in $\cat$ between composition
  and tensor of structure morphisms translate directly into the corresponding
  identities in $\cat/\wa\Nideal$.
\end{proof}

\begin{proof}[Proof of Theorem \ref{T:purSimplifiedVersion}]
  Objects of $\Jideal$ are retracts of elements
  $U'\otimes\pi(V)\otimes W'$ where $V\in\ideal$, and
  $U',W'\in\cat/\wa\Nideal$.  Since $U',W'$ are the image by $\pi$ of
  some $U,W\in\cat$ and $U\otimes V\otimes W\in\ideal$, then
  $U'\otimes\pi(V)\otimes W'\in\pi(\ideal)$.  Hence objects of
  $\Jideal$ are the objects $\pi(W)$ which are retract of $\pi(V)$ for
  some $V\in\ideal$.  Then there exists $i:W\to V$, $p:V\to W$ such
  that $pi=\Id_W+n$ in $\cat$ for some $n\in\wa\Nideal_{\mt}(W,W)$.
  For $\wb f\in\End_{\cat/\wa\Nideal_{\mt}}(\pi_{\mt}(W))$ we will
  show the assignment $\wb\mt_{\pi(W)}(\wb f)= \mt_V(ifp)$ defines a
  well defined unique m-trace on $\Jideal$.  If $\wb\mt$ exists then
  $\mt_V(ifp)=\wb\mt_{\pi(V)}(\pi(i)\wb f\pi(p))=
  \wb\mt_{\pi(W)}(\pi(p)\pi(i)\wb f)=\wb\mt_{\pi(W)}(\wb f)$ and so it
  is uniquely determined by $\mt$. To see it is well defined, let
  $f',i',p'$ be alternative choices then
  $f-f'\in\wa\Nideal_{\mt}(V,V)$ and
  $p'i'= pi=\Id_W \text{ mod } \wa\Nideal_{\mt}(W,W)$. Thus,
  $$\mt_V(ifp)=\mt_V(if'p)=\mt_V(if'p'i'p)=\mt_V(i'pif'p')=\mt_V(i'f'p').$$
 
  Finally, let us consider the non degeneracy of $\wb\mt$: 
  let $\pi_\mt(W)\in\Jideal$ (a retract of $\pi_\mt(V)$ with $V\in \ideal$) and let $\wb f: \pi_\mt(W)\to \pi_\mt(\unit)$ be a non-zero morphism in $\cat/\wa\Nideal_\mt$.  For any lift $f:W\to \unit$ of $\wb f$ is not contained in $\wa\Nideal_\mt$ and so there exists $U\in \ideal$ and morphisms $g_1:\unit\to U$ and $g_2: U\to W$ in $\cat$ such that $\mt_U(g_1fg_2)\neq 0$.  By definition $\wb\mt_{\pi(W)}((\wb g_2 \wb g_1) \wb f)=\mt_V(ig_2g_1fp)=\mt_U(g_1fg_2)\neq 0$.  Thus, $\wb f$ is non-zero in the pairing.  Analogously,  the pairing is non-degenerate on the left.  
  \end{proof}

To end the section we record some facts about the relationship between ideals and tensor ideals of morphisms.  These facts can be proven with a little work, we do not use these here. 
\begin{enumerate}
\item  If $\ideal\subset \cat$ is an additive  ideal (meaning any two objects of $\ideal$ have a direct sum in $\ideal$) 
  then the set of morphisms which factor through some object of
  $\ideal$ is a tensor ideal of morphisms $\Ar(\ideal)$.
  \item   If $\wa\Nideal$ is a tensor ideal of morphisms in $\cat$ then the
  set of $\wa\Nideal$-negligible objects (i.e.\ those objects
  $V\in \cat$ such that $\End_\cat(V)\subset \wa\Nideal$ or, equivalently
  $\Id_{V}\in \wa\Nideal$) is an ideal in $\cat$.
  \item  If $\ideal\subset \cat$ is an additive ideal then $\ideal$ is the ideal of
  $\Ar(\ideal)$-negligible object. 
  \item   If $\cat$ is idempotent complete (meaning any idempotent factors through a retract) and
  $\wa\Nideal$ is a tensor ideal of morphism in $\cat$ then
  the isomorphism classes of indecomposable objects of $\cat/\wa\Nideal$
  are in bijection with non $\wa\Nideal$-negligible isomorphism
  classes of indecomposable objects of $\cat$.
  \end{enumerate}

\section{Graded admissible skein modules}

\subsection{Homogeneous $\ideal$-admissible graphs} Let $\cat$ be a pivotal $\FK$--category and $\Gr$ be a
group.
A \textit{$\Gr$-grading} on $\cat$ is an equivalence of linear categories
$\cat \cong \bigoplus_{g \in \Gr} \cat_g$ where 
$\{ \cat_g \mid g \in \Gr \}$ is a  family of full subcategories of $\cat$
satisfying the following conditions: 1) $\unit \in \cat_1$, 2)  if $V\in\cat_g$,  then  $V^{*}\in\cat_{g^{-1}}$, 3) if $V\in\cat_g$, $V'\in\cat_{g'}$ then $V\otimes
    V'\in\cat_{gg'}$ and 4)  if $V\in\cat_g$, $V'\in\cat_{g'}$ and $\Hom_\cat(V,V')\neq \{0\}$, then
    $g=g'$.

\begin{definition}\label{def:homogeneous}
A $\cat$-colored graph $\Gamma$ is \emph{homogeneous} if each edge $e$ of $\Gamma$ belongs to $\cat_{g(e)}$ for some $g(e)\in \Gr$ and every coupon $c$ of $\Gamma$ is filled with a morphism of $\cat_{g(c)}$ for some $g(c)\in \Gr$.
\end{definition}
\begin{lemma}\label{lem:homogeneous}
Each $\ideal$--admissible graph is $\ideal$-skein equivalent to a linear combination of homogeneous $\ideal$--admissible graphs. 
\end{lemma}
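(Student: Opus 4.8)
The plan is to reduce an arbitrary $\ideal$--admissible graph to a homogeneous one edge by edge and coupon by coupon, using the grading decomposition of the underlying hom-spaces together with $\ideal$--skein relations that separate out the homogeneous pieces. First I would recall that for any edge $e$ of $\Gamma$ colored by an object $V$, the grading equivalence $\cat \cong \bigoplus_{g\in\Gr}\cat_g$ gives a direct sum decomposition $V \cong \bigoplus_{g} V_g$ with $V_g \in \cat_g$; concretely there are morphisms $\iota_g\colon V_g \to V$ and $p_g\colon V \to V_g$ with $\sum_g \iota_g p_g = \Id_V$ and $p_g \iota_{g'} = \delta_{g,g'}\Id_{V_g}$ (the sum being finite since hom-spaces are finite dimensional). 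Note that if $V\in\ideal$ then each $V_g$, being a retract of $V$, is also in $\ideal$.

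Next I would perform the homogenization in two stages. Using Proposition~\ref{prop:ddd} (or just by adding identity coupons as in its proof) we may assume every edge of $\Gamma$ joins two coupons. For a fixed non-homogeneous edge $e$ colored by $V$, insert a small box $B$ on that edge containing just a segment of $e$, and apply the $\ideal$--skein relation expressing $\Id_V = \sum_g \iota_g p_g$ inside $B$: since $e$ is $\ideal$--colored (or, if not, we first tensor it with a nearby $\ideal$--colored edge as in Proposition~\ref{prop:ddd} to make it so), the condition that some $\ideal$--colored edge leaves the box is met, because the rest of $e$ outside $B$ survives and carries color $V$ — and after the relation is applied we reabsorb the resulting coupons $p_g,\iota_g$ into the adjacent coupons. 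This replaces $\Gamma$ by a linear combination of graphs in which the chosen edge is homogeneous; crucially, when the relation splits $e$ into pieces colored $V_g$, the adjacent coupons get precomposed/postcomposed with the $p_g,\iota_g$, which we then keep as honest coupons for the moment. Iterating over all edges yields a linear combination of graphs in which every edge is homogeneous.

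Finally I would homogenize the coupons. A coupon $c$ now has all incident edges homogeneous, so its color $\varphi$ lies in $\Hom_\cat(V_1\otimes\cdots\otimes V_n, W_1\otimes\cdots\otimes W_m)$ where the $V_i,W_j$ are homogeneous of degrees $g_i,h_j$. This hom-space decomposes as $\bigoplus_{g\in\Gr}$ of its graded pieces, but by grading axiom (4) and axioms (2),(3) it is already concentrated in a single degree: it is nonzero only when $g_1\cdots g_n = h_1\cdots h_m$, and then all of $\varphi$ is homogeneous of that degree. So in fact no work is needed — once all edges are homogeneous, each coupon is automatically homogeneous by the grading axioms. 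Hence the final linear combination consists of homogeneous graphs, and each is still $\ideal$--admissible because we only ever tensored edges with $\ideal$-objects or split $\ideal$-edges into $\ideal$-subobjects, preserving the presence of an $\ideal$-colored edge on every component.

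The main obstacle is the bookkeeping in the edge-homogenization step: one must make sure that the box $B$ used to insert $\Id_V = \sum_g \iota_g p_g$ genuinely supports a valid $\ideal$--skein relation, i.e.\ that after isotopy there is an $\ideal$--colored edge not contained in $B$. This is where the reduction via Proposition~\ref{prop:ddd} is needed — by first arranging (up to $\ideal$--skein equivalence) that the relevant edge is itself $\ideal$--colored, or lies next to an $\ideal$--colored edge, the surviving portion outside $B$ provides the required edge. One should also check that the order of operations (edges first, then the automatic coupon statement) does not undo earlier homogenizations; since splitting one edge only modifies the two coupons at its ends and leaves all other edges untouched, a straightforward induction on the number of non-homogeneous edges closes the argument.
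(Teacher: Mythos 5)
Your proposal is correct and follows essentially the same route as the paper's proof: decompose each color into its homogeneous summands (which remain in $\ideal$ as retracts of $\ideal$-objects), insert $\Id_V=\sum_g \iota_g\circ p_g$ via an $\ideal$--skein relation in a small box on each non-homogeneous edge, absorb the resulting $p_g,\iota_g$ coupons into the adjacent coupons, and iterate over the non-homogeneous edges. Your preliminary detour through Proposition~\ref{prop:ddd} (admissibility of the small box already follows from $\ideal$--admissibility of the graph) and your explicit observation that the coupons are then automatically homogeneous by the grading axioms are harmless additions that the paper leaves implicit.
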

\begin{proof}
  If $V\in \cat$  then the definition of a $\Gr$-grading 
  implies there exist finitely many homogeneous objects $V_i$
  such that $V=\oplus V_i$. Furthermore, if $V\in \ideal$ then each
  $V_i$ is in $\ideal$ because $V_i$ is a retract of $V$.  Let
  $p_i:V\to V_i$ and $j_i:V_i\to V$ be projections and inclusions such
  that $\Id_V=\sum_{i} j_i\circ p_i$.
 
  If $\Gamma$ contains an edge colored by $V$ then up to applying an
  $\ideal$-skein relation we can replace a small sub-arc of the edge by
  the formal sum of arcs each containing two coupons decorated
  respectively with $j_i$ and $p_i$ because
  $\Id_V=\sum_{i} j_i\circ p_i.$
  Then isotoping each of the coupons decorated by $j_i$ and $p_i$ we
  can ``absorb"
  them in the coupons at the
  endpoint of the edge via a $\ideal$-skein relation.
  Repeating this argument for all the non
  homogeneous edges we conclude.
\end{proof}

\subsection{Homology with $\Gr$-coefficients as a skein module}

Let $\Gr$ be a group. Define $\cat_\Gr$ to be the
$\FK$-category whose objects are $\{g, g\in \Gr\}$, endomorphisms are
$1$-dimensional and other morphisms are zero.  A strict  monoidal structure on $\cat_\Gr$ is given by $g\otimes h=gh$ and the pivotal
structure by $g^*=g^{-1}$ and the fact that all (co)-evaluation
morphisms are the identity of $\unit$.
The category $\cat_\Gr$ is
$\Gr$-graded with an unique object in each degree. If $\Gr$ is
abelian we endow $\cat_\Gr$ with the symmetric braiding 
$g\otimes h\to h\otimes g$ given by the identity map of $gh$ turning
$\cat_\Gr$ into a ribbon category.  
With this in mind, we let $M$ be a 2 or 3-manifold and in the second case we assume $\Gr$ is abelian so that $ \cat_\Gr$ is ribbon.

We will say that a $\cat_\Gr$-colored ribbon graph is \emph{group-like} if it is homogeneous and all
its coupons are colored by the identity morphism.  One can prove that any homogeneous graph is equivalent to $\lambda$ times a group-like
  ribbon graph for some $\lambda\in\FK^*$.  Furthermore, if $\Gamma$ is
  a group-like $\cat_\Gr$-colored ribbon graph in a box then its image
  by the Reshetikhin-Turaev functor $F$ is an identity morphism of $\cat_\Gr$.
 Also, one can check that $\Skein_{\cat_\Gr}(M)$ is the free $\FK$-vector space with
  basis the equivalence classes of group-like $\cat_\Gr$-colored ribbon graphs under the relation generated by $\Gamma\sim\Gamma'$ if $\Gamma-\Gamma'$ is a skein relation.
Using these facts one can prove the following lemma.  
\begin{lemma}\label{L:H1ab}
  If $\Gr$ is abelian then
  $\Skein_{\cat_\Gr}(M)=\FK[H_1(M;\Gr)]$ where the
  isomorphism is induced by sending a $g$-colored skein to the
  homology class of the $g$-colored element.  In particular the set of
  class of group-like element of $\Skein_{\cat_\Gr}(M)$ can be
  identified with $H_1(M;\Gr)$.
\end{lemma}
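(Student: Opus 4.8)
The plan is to compute $\Skein_{\cat_\Gr}(M)$ directly from the three facts recalled just before the lemma and then read off the stated isomorphism. Those facts say that $\Skein_{\cat_\Gr}(M)$ is the free $\FK$-vector space on the set $\mathscr{G}$ of isotopy classes of group-like $\cat_\Gr$-colored ribbon graphs in $M$ modulo the equivalence $\sim$ generated by $\Gamma\sim\Gamma'$ whenever $\Gamma-\Gamma'$ is an $\ideal$-skein relation, and that the image under the Reshetikhin--Turaev functor $F$ of a group-like graph inside a box is always an identity morphism. Since here the ideal is all of $\cat_\Gr$ and every hom-space of $\cat_\Gr$ is at most one-dimensional, I would first note that for group-like graphs the $F$-part of a two-term skein relation carries no information: if $\Gamma,\Gamma'$ are isotopic to graphs which agree outside a box $B$ and have the same boundary data on $\partial B$, then the group-like (``flux-conservation'') condition forces the bottom and top boundary objects of $\Gamma\cap B$ to coincide, so $F(\Gamma\cap B)$ and $F(\Gamma'\cap B)$ are both the identity of that common object, hence equal; thus $\Gamma\sim\Gamma'$ automatically. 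In other words, $\sim$ is generated by isotopy together with arbitrary \emph{box moves}, i.e.\ replacements of the part of a group-like graph inside a ball by any other group-like graph with the same boundary (the constraint that an $\ideal$-colored edge lie outside the box being vacuous here, up to the harmless requirement that the box not swallow the whole graph).

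Next I would construct the comparison map. A group-like graph $\Gamma$, with its edge orientations and $\Gr$-labels, is exactly the datum of a cellular $1$-cycle with coefficients in $\Gr$ for any CW-structure of $M$ refining $\Gamma$: the condition at each coupon — that the product of the labels of the incoming half-edges equal that of the outgoing ones, which is precisely what makes the identity coloring of the coupon legitimate — is the local cycle relation $\partial\big(\sum_e g_e\,e\big)=0$. This yields a class $[\Gamma]\in H_1(M;\Gr)$, independent of the chosen CW-structure and of the isotopy class. I claim $\Gamma\mapsto[\Gamma]$ descends to a bijection $\mathscr{G}/{\sim}\;\longrightarrow\;H_1(M;\Gr)$. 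Well-definedness: a box move changes the underlying $1$-cycle only inside a closed ball $\bar B\subset M$, and by the difference of two sub-chains with equal boundary on $\partial B$ — a $1$-cycle supported in $\bar B$, hence a boundary there since $H_1(\bar B;\Gr)=0$ — so $[\Gamma]$ is unchanged; isotopy visibly preserves $[\Gamma]$. Surjectivity: fixing a triangulation of $M$, every class in $H_1(M;\Gr)\cong H_1^{\mathrm{simp}}(M;\Gr)$ is carried by a simplicial $1$-cycle, and a regular neighborhood of its labelled support — with a small contractible loop added in any component the cycle misses, to meet admissibility — is a group-like graph realizing it.

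The substantive step is injectivity: \emph{if $\Gamma_0,\Gamma_1\in\mathscr{G}$ have $[\Gamma_0]=[\Gamma_1]$, then $\Gamma_0\sim\Gamma_1$.} Granting this the map is injective, because on $\FK$-spans a combination $\sum_i a_i\Gamma_i$ with $\sum_i a_i[\Gamma_i]=0$ in $\FK[H_1(M;\Gr)]$ vanishes after grouping terms by homology class. I would prove injectivity in two stages. (i) \emph{Simplicial normal form}: fix a triangulation $T$, isotope $\Gamma_i$ into general position and into a regular neighborhood of the $1$-skeleton $T^{(1)}$, and then use box moves to merge the parallel strands lying over each edge of $T$ into a single strand labelled by their product (an identity-coupon merge in a ball) and to consolidate the picture near each vertex of $T$ into one identity coupon; after passing to a sufficiently fine common subdivision this shows each $\Gamma_i$ is $\sim$-equivalent to a simplicial $1$-cycle $C^{(i)}$ for $T$. (ii) \emph{Telescoping across $2$-simplices}: since $[C^{(0)}]=[C^{(1)}]$ there is a simplicial $2$-chain $\sigma=\sum_{k=1}^{N} g_k\tau_k$ with $\partial\sigma=C^{(1)}-C^{(0)}$; setting $C_0=C^{(0)}$ and $C_j=C_{j-1}+\partial(g_j\tau_j)$, each $C_j$ is again a simplicial $1$-cycle, hence a group-like graph, and $C_{j-1},C_j$ agree outside a ball containing the closed star of $\tau_j$, so $C_{j-1}\sim C_j$ by a single box move; chaining gives $\Gamma_0\sim C^{(0)}=C_0\sim\cdots\sim C_N=C^{(1)}\sim\Gamma_1$.

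Granting the bijection, $\Skein_{\cat_\Gr}(M)=\FK[\mathscr{G}/{\sim}]\cong\FK[H_1(M;\Gr)]$, under which a $g$-colored circle $C$ goes to the homology class of $C$ taken with coefficient $g$ — the asserted isomorphism — and the final sentence of the lemma is this bijection restricted to (classes of) group-like elements. I expect the main obstacle to be stage (i): a ``simplicial approximation'' statement for group-like ribbon graphs, where one must check that collapsing a graph onto a $1$-skeleton while amalgamating strands and absorbing coupons can always be realized through bona fide group-like ribbon graphs by box moves; in the $3$-manifold case one also records that crossings may be ignored because the braiding and twist of $\cat_\Gr$ are identities, so changes of diagram are themselves box moves. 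Stage (ii) is then routine bookkeeping, with only the minor point — settled by first isotoping $\Gamma_0$ to be suitably spread out, or by freely inserting auxiliary small loops (which is a skein equivalence since $F$ of a small loop in $\cat_\Gr$ equals $1$) — that no intermediate box should engulf the entire graph.
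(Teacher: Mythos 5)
Your proposal is correct and follows the route the paper intends: the paper gives no argument beyond the facts it lists just before the lemma (that $\Skein_{\cat_\Gr}(M)$ is freely spanned by skein classes of group-like graphs and that $F$ of a group-like graph in a box is an identity morphism), and your observation that skein equivalence of group-like graphs reduces to box moves, followed by the simplicial normal form and the telescoping across $2$-simplices, supplies exactly the omitted bookkeeping. I see no substantive gap; the one step you flag (pushing a group-like graph onto a neighborhood of the $1$-skeleton and fusing strands over each edge through genuine group-like graphs) is standard and works as you sketch, with the small-loop trick handling admissibility throughout.
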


The above lemma motivates us to call the set of equivalence classes of group-like
elements of $\Skein_{\cat_\Gr}(M)$ the ``non commutative $H_1$'' of
$M$ with coefficients in $\Gr$ in general.  Remark that in
contrast with the commutative case this $H_1$ with non commutative
coefficients is not a group.
\begin{definition}
 We denote by $H_1(M;\Gr)$ the set of
  group-like elements of $\Skein_{\cat_\Gr}(M)$.
\end{definition}

\subsection{Graded admissible skein modules} Let $M$ and $\Gr$ be as in the last subsection.

\begin{definition}\label{D:holonomy}
  Let $\Gamma$ be a homogeneous admissible $\ideal$-colored graph in $M$. We say the \emph{degree of $\Gamma$} is the element
  $h_\Gamma\in H_1(M,G)$ represented by the group-like
  $\cat_G$-colored graph obtained by replacing all colors of edges of $\Gamma$ with their degrees (and each coupon with an identity morphism).
\end{definition}

\begin{proposition}\label{P:graduation}
  We have the following decomposition of $\FK$-vectors spaces:
 $\Skein_{\ideal}(M)=  \bigoplus_{h\in H_1(M;\Gr)} \Skein^h_{\ideal}(M)$
  where
  $\Skein^h_{\ideal}(M)$ is the $\FK$-span of homogeneous
  $\ideal$--admissible graphs $M$ of degree $h$.
\end{proposition}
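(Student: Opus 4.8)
The plan is to show the two natural things: that every $\ideal$--admissible graph lies in the sum of the $\Skein^h_\ideal(M)$, and that this sum is direct. For the first point I would invoke Lemma~\ref{lem:homogeneous}: any $\ideal$--admissible graph is $\ideal$--skein equivalent to a linear combination of homogeneous $\ideal$--admissible graphs, and each homogeneous summand $\Gamma$ has a well-defined degree $h_\Gamma\in H_1(M;\Gr)$ by Definition~\ref{D:holonomy}. Hence $\Skein_\ideal(M)=\sum_{h}\Skein^h_\ideal(M)$. The one subtlety here is that the degree $h_\Gamma$ is genuinely well defined on skein-equivalence classes of homogeneous graphs: I would check that any $\ideal$--skein relation among homogeneous graphs only relates graphs of the same degree, because condition (4) of a $\Gr$-grading forces every coupon (hence the morphism $F(\Gamma_i'\cap B)$) to be homogeneous of a fixed degree, and $\sum_i a_i F(\Gamma_i'\cap B)=0$ with all the $\Gamma_i'$ identical outside $B$ then forces all the nonzero $a_i$ to correspond to the same degree on the boundary of the box, which by the group-like picture of Lemma~\ref{L:H1ab} is the same class in $H_1(M;\Gr)$. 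So $\Skein^h_\ideal(M)$ is a well-defined subspace and the assignment $\Gamma\mapsto h_\Gamma$ extends linearly to a grading on the homogeneous generators.

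For directness of the sum, the cleanest route is to build, for each $h\in H_1(M;\Gr)$, a projection $p_h:\Skein_\ideal(M)\to\Skein_\ideal(M)$ whose image is $\Skein^h_\ideal(M)$ and which kills $\Skein^{h'}_\ideal(M)$ for $h'\neq h$. I would define $p_h$ on generators by first rewriting a graph as a linear combination of homogeneous graphs via Lemma~\ref{lem:homogeneous}, then keeping only the homogeneous summands of degree $h$. To see this is well defined on $\Skein_\ideal(M)$ I would use the previous paragraph: a skein relation among arbitrary graphs, after homogenizing both sides, becomes a sum of skein relations each internally of a single fixed degree (this is exactly the content of the grading axiom applied inside the box, together with the fact that homogenizing an edge splits it according to $\Id_V=\sum_i j_i\circ p_i$ into pieces whose degrees, read off in $\cat_\Gr$, are exactly the degrees of the $V_i$). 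Once the $p_h$ are well defined, $\sum_h p_h=\Id$ and $p_h p_{h'}=\delta_{h,h'}p_h$ give the direct sum decomposition immediately.

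The main obstacle I expect is precisely the well-definedness of $p_h$, i.e. showing that homogenization is compatible with skein relations: one has to argue that starting from a single skein relation $\sum_i a_i\Gamma_i$ supported in a box $B$, the homogenization procedure of Lemma~\ref{lem:homogeneous} can be carried out ``coherently'' across all the $\Gamma_i$ (using the same splittings $\Id_V=\sum j_i\circ p_i$ on the parts outside $B$, which are literally identical for all $i$) so that the result is, degree by degree, again a valid skein relation. This is where the hypothesis that $F(\Gamma_i'\cap B)$ is a morphism in $\cat$ and the grading axiom (4) — homogeneous Hom-spaces between objects of different degree vanish — do the real work: the morphism inside the box automatically decomposes along the degree decomposition of its source and target objects, and the vanishing relation $\sum_i a_i F(\Gamma_i'\cap B)=0$ holds in each graded piece separately. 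Everything else is bookkeeping: that $\Skein^h_\ideal(M)$ as defined via homogeneous generators coincides with the image of $p_h$, and that distinct $h$ give $p_h$ with disjoint images. I would spell out the grading-axiom argument carefully and leave the rest to the reader as routine.
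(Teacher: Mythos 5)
Your proposal is correct and takes essentially the same route as the paper: spanning via Lemma~\ref{lem:homogeneous}, and directness via the key claim that any $\ideal$--skein relation, after coherent homogenization (same splittings $\Id_V=\sum_i j_i\circ p_i$ on the common part outside the box), decomposes into skein relations each of a single degree, because homogeneous graphs agreeing outside the box have the same class in $H_1(M;\Gr)$ (both reduce to the $\cat_\Gr$-graph with the box replaced by an identity coupon). Your packaging of the direct sum through projections $p_h$ is only a cosmetic reformulation of the paper's injectivity argument for the natural map $\bigoplus_h\Skein^h_\ideal(M)\to\Skein_\ideal(M)$.
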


\begin{proof}
  By Lemma \ref{lem:homogeneous} the natural $\FK$-linear map
  $\omega:\bigoplus_{h\in H_1(M;\Gr)} \Skein^h_{\ideal}(M)\to
  \Skein_{\ideal}(M)$ is surjective and the set of homogeneous
  $\ideal$--admissible graphs is a system of generators for
  $\Skein_\ideal(M)$.  We now claim that any skein relation in
  $\Skein_{\ideal}(M)$ is a sum of
  skein relations
  where each skein relation involves graphs of
  the same homogeneous degree; this will prove that $\omega$ is injective.
   Let
  $\sum_{i=1}^k c_i \Gamma_i=0\in \Skein_{\ideal}(M)$ be a skein
  relation associated to a box $B$. Applying the process of the proof
  of Lemma \ref{lem:homogeneous} we can split the $\Gamma_i$ as linear
  combination of graphs which are homogeneous outside $B$.  This
  reduces to the case of a skein relation where all $\Gamma_i$ are
  homogeneous outside $B$.  Now if $\Gamma$ and $\Gamma'$ are any two
  homogeneous graph which coincide outside the box $B$, we have
  $h_\Gamma=h_{\Gamma'}$ because they are both equivalent to the
  $\cat_G$-graph where the box $B$ is replaced with a coupon colored
  with the identity morphism.  Hence applying the process of the proof
  of Lemma \ref{lem:homogeneous} inside the box gives a linear
  combination of homogeneous graphs which have all the same degree.
\end{proof}

\subsection{Finite dimensionality}

In this subsection $\cat$ is an additive pivotal $\Gr$-graded $\FK$--category (the case of ungraded categories is included by setting $\Gr$ be the trivial group). 
Let $\Sigma$ be a surface.

\begin{definition}
  We say that an ideal $\ideal$ of $\cat$ is \emph{graded finite} if for each $g\in \Gr$
  the subcategory $\ideal\cap \cat_g$ admits a generator $V_g$, that is any object of $\ideal\cap \cat_g$ is retract of $V_g^{\oplus n}$ for some $n$.
\end{definition}
Recall that by Proposition \ref{P:graduation} we have
$\Skein_\ideal(\Sigma)=\bigoplus_{h\in H_1(\Sigma;\Gr)}
\Skein_\ideal^h(\Sigma)$.
\begin{proposition}\label{prop:finitewithboundary}
Let $\Sigma$ be a compact connected 
  surface with non-empty boundary then for every graded finite ideal
  $\ideal\subset \cat$ and every $h\in H_1(\Sigma;\Gr)$ it holds
  $\dim_{\FK}\Skein^h_\ideal(\Sigma)<\infty$. 
\end{proposition}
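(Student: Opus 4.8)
The strategy is to exhibit a finite spanning set for $\Skein^h_\ideal(\Sigma)$. Since $\Sigma$ is compact connected with non-empty boundary, it deformation retracts onto a wedge of circles, equivalently $\Sigma$ is homeomorphic to a regular neighborhood of an embedded graph $R$ (a ribbon structure on a rose or a more convenient $1$-complex). The first step is to fix such a spine and a "standard picture" for ribbon graphs: using Proposition~\ref{prop:ddd} every class in $\Skein_\ideal(\Sigma)$ is represented by a ribbon graph all of whose edges are colored by objects of $\ideal$, and by Lemma~\ref{lem:homogeneous} (and Proposition~\ref{P:graduation}) every class in $\Skein^h_\ideal(\Sigma)$ is a combination of homogeneous such graphs of degree $h$. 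Because $\ideal$ is graded finite, each homogeneous color can be taken to be a retract of some power of the finite list of generators $\{V_g\}_{g\in\Gr}$; absorbing the inclusion/projection morphisms into the adjacent coupons as in the proof of Proposition~\ref{prop:ddd}, I may assume every edge of the representing graph is colored by one of the (finitely many) generators $V_g$.

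The second, and main, step is to bound the combinatorics of such representatives modulo skein relations. Push the whole graph into a neighborhood of the spine $R$; then each edge is isotopic to a path running along the edges of $R$, and each coupon sits in a vertex-neighborhood or edge-neighborhood. The key reduction is: whenever two parallel strands colored by generators $V_g, V_{g'}$ run alongside one another over a long stretch of the spine, I can use a skein relation in a box straddling that stretch to replace the pair by a single strand colored $V_g\otimes V_{g'}$ (which is again in $\ideal$, and homogeneous of degree $gg'$), then re-expand $V_g\otimes V_{g'}$ as a retract of a power of the generator $V_{gg'}$ and re-split — the upshot is that, modulo skein relations, only the multiplicity of strands crossing each edge of the spine matters, not how they are braided or interleaved along that edge, nor how many coupons are strung along it. Here the admissibility/"$\ideal$-colored edge outside the box" condition is free to arrange because every edge is $\ideal$-colored. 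This is the step I expect to be the main obstacle: one must organize the local moves carefully so that the interleaving of distinct-degree strands, the braidings between them, and the coupons accumulated along each spine edge can all be canonically normalized, and one must check that fixing the degree $h$ constrains the total strand multiplicity on the spine (the homology class $h$ determines the sum of degrees of strands crossing each edge, hence — since the generators $V_g$ are a fixed finite list — bounds the number of strands of each degree up to the skein relation $V_g\otimes V_{g'}\sim$ power of $V_{gg'}$).

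Once the normalization is in place, the argument concludes quickly. After the reduction, a representative of a class in $\Skein^h_\ideal(\Sigma)$ is determined by: the (bounded) multiplicity of strands crossing each edge of the finite spine, together with a single coupon placed in each vertex-neighborhood of the spine whose color is a morphism in a fixed finite-dimensional $\Hom$-space (the source and target being fixed tensor products of the finitely many generators, as dictated by the incident edges). Since $\cat$ is a $\FK$-category its $\Hom$-spaces are finite dimensional, so there are only finitely many "combinatorial types" of normalized representative for degree $h$, and each contributes a finite-dimensional space of colorings; their $\FK$-span therefore surjects onto $\Skein^h_\ideal(\Sigma)$, giving $\dim_\FK \Skein^h_\ideal(\Sigma) < \infty$. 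The compactness of $\Sigma$ and non-emptiness of $\partial\Sigma$ are used precisely to guarantee a \emph{finite} spine with free fundamental group, so that "push to the spine" terminates and the strand multiplicities are finite; connectedness is used only to talk about a single spine and a single homology group $H_1(\Sigma;\Gr)$.
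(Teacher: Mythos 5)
Your plan is essentially the paper's own argument, just phrased in the dual picture: the paper chooses disjoint properly embedded arcs $c_1,\ldots,c_k$ cutting $\Sigma$ into a disk (your spine is the dual rose), makes the graph homogeneous and entirely $\ideal$-colored via Lemma~\ref{lem:homogeneous} and Proposition~\ref{prop:ddd}, fuses all strands crossing each $c_i$ into a single edge and uses graded finiteness to recolor it by the generator $V_{h_i}$, and then collapses everything in the complementary disk into one coupon, giving $\dim_\FK\Skein^h_\ideal(\Sigma)\leq\dim_\FK\Hom_\cat(\unit,V_{h_1}\otimes\cdots\otimes V_{h_k})<\infty$. The normalization you flag as the main obstacle (interleavings, braidings and coupons along a spine edge) is resolved exactly by this fuse-then-collapse step, since after cutting along the arcs everything else lives in a disk and is absorbed into the single coupon, so your proposal is correct and matches the paper's proof.
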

\begin{proof}
There exists a set of disjoint simple curves $c_1,\ldots c_k$ properly embedded in $\Sigma$ such that $\Sigma \setminus (c_1\sqcup \ldots \sqcup c_k)$ is a disk. 
If $\Gamma\in \Skein_\ideal(\Sigma)$ is homogeneous, then we can assume up to isotopy that it intersects each $c_i$ transversally and by Proposition \ref{prop:ddd} that all its edges are $\ideal$-colored. Then $\Gamma$ is skein equivalent (by fusing all the edges intersecting each $c_i$) to a graph intersecting each $c_i$ at most once; let $h_i\in \Gr$ be the degree of such an intersection and $V_{h_i}$ the generator of $\ideal\cap \cat_{h_i}$. Then up to applying one skein relation for each $c_i$ we can replace $\Gamma$ with a linear combination of graphs intersecting $c_i$ via a single edge colored by $V_{h_i}$.
We can express each of these graphs via linear combination of graphs each containing a single coupon decorated by a morphism in $\Hom(\unit,V_{h_1}\otimes \cdots \otimes V_{h_k})$ in the disk $\Sigma\setminus (c_1\sqcup\cdots \sqcup c_k)$. 
Therefore if $h\in H_1(\Sigma, \Gr)$ is the homology class of $\Gamma$ we just showed that $\dim_\FK \Skein_\ideal^h(\Sigma)\leq \dim_{\FK}\Hom(\unit,V_{h_1}\otimes \cdots \otimes V_{h_k})<\infty$.
   \end{proof}
   \begin{proposition}\label{P:G_I}
  Let $\ideal$ be an ideal in $\cat$, then the set
  $\Gr_\ideal=\{g\in\Gr:\ideal\subset \ideal_{\cat_g}\}$
  is a subgroup of
  $\Gr$ where $\ideal_{\cat_g}$ is the ideal generated by all object of $\cat_g$.
\end{proposition}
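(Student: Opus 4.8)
The plan is to verify that $\Gr_\ideal$ contains $1$ and is closed under inverses and under products. That $1\in\Gr_\ideal$ is immediate: $\unit\in\cat_1$ and every object $W$ satisfies $W\cong\unit\otimes W$, so the ideal generated by $\cat_1$ is already all of $\cat$; hence $\ideal_{\cat_1}=\cat\supseteq\ideal$.

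For closure under inverses I would first record that every ideal is stable under duality: if $V\in\ideal$ then $V^*\otimes V\otimes V^*\in\ideal$ by closure under tensoring with arbitrary objects, and $V^*$ is a retract of $V^*\otimes V\otimes V^*$ by the zig-zag identities, so $V^*\in\ideal$. Now take $g\in\Gr_\ideal$ and $W\in\ideal$. Then $W^*\in\ideal\subseteq\ideal_{\cat_g}$, so $W^*$ is a retract of some $X\otimes V\otimes Y$ with $V\in\cat_g$ and $X,Y\in\cat$. Applying $(-)^*$ to this retraction and using $W\cong W^{**}$ together with $(X\otimes V\otimes Y)^*\cong Y^*\otimes V^*\otimes X^*$ and $V^*\in\cat_{g^{-1}}$, we get $W\in\ideal_{\cat_{g^{-1}}}$. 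As $W$ was arbitrary, $\ideal\subseteq\ideal_{\cat_{g^{-1}}}$, i.e.\ $g^{-1}\in\Gr_\ideal$.

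Closure under products is the crux. Let $g,h\in\Gr_\ideal$ and $W\in\ideal$; since $W$ is a finite direct sum of its homogeneous components, each a retract of $W$ and hence again in $\ideal$, we may assume $W$ homogeneous, of degree $d$ say. Using $g\in\Gr_\ideal$, write $W$ as a retract of $X\otimes V\otimes Y$ with $V\in\cat_g$; splitting $X,Y$ into homogeneous pieces and invoking the fourth grading axiom we may take $X\in\cat_a$, $Y\in\cat_b$ with $agb=d$. Because $\ideal$ is closed under tensoring and $W\in\ideal$, the object $X^*\otimes W\otimes Y^*$ lies in $\ideal$, and its degree is $a^{-1}db^{-1}=g$, so it lies in $\ideal\cap\cat_g$; symmetrically one produces objects of $\ideal\cap\cat_h$ out of $W$. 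One then wants to glue a degree-$g$ factor to a degree-$h$ factor — noting that $U\otimes V_h\in\ideal\cap\cat_{gh}$ whenever $U\in\ideal\cap\cat_g$ and $V_h\in\cat_h$ — and to rebuild $W$ as a retract of something of the shape $(\text{object})\otimes(\text{object of }\cat_{gh})\otimes(\text{object})$, giving $W\in\ideal_{\cat_{gh}}$ and hence $\ideal\subseteq\ideal_{\cat_{gh}}$, i.e.\ $gh\in\Gr_\ideal$.

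The delicate part, and where I expect the real work, is this last rebuilding step: one knows how to enlarge a degree-$g$ generator to a degree-$gh$ object by tensoring, but one is \emph{not} free to insert or delete arbitrary auxiliary tensor factors (a general $W$ need not be a retract of $Z\otimes Z^*\otimes W$, the obvious zig-zag composition being $\dim(Z)\cdot\Id_W$). So the argument must use the homogeneity constraint of axiom 4 together with the freedom in the choice of the various retraction presentations of $W$ to position a degree-$g$ factor and a degree-$h$ factor adjacently while keeping the degrees of all the remaining factors under control, so that the object one finally lands on genuinely carries degree $gh$ and not merely a conjugate of it.
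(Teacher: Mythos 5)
Your verification that $1\in\Gr_\ideal$ and your closure-under-inverses argument are correct (duality stability of ideals, dualizing the retraction, $V\cong V^{**}$, $V^*\in\cat_{g^{-1}}$); these points are in fact left implicit in the paper, which only addresses products. But closure under products is the whole content of the proposition, and there your text is only a plan: you reduce to a homogeneous $W$, produce objects of $\ideal\cap\cat_g$ and $\ideal\cap\cat_h$, and then say one "wants to rebuild $W$ as a retract of $(\text{object})\otimes(\text{object of }\cat_{gh})\otimes(\text{object})$", explicitly flagging this rebuilding as the real work and not carrying it out. As it stands this is a genuine gap, and your own diagnosis shows why the naive route fails: substituting one two-sided presentation into another leaves an auxiliary homogeneous factor wedged between the degree-$g$ and degree-$h$ objects, so one only lands in $\ideal_{\cat_{gb'h}}$ for an uncontrolled $b'$, and one is not free to cancel or insert stray $Z\otimes Z^*$ factors.

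The missing idea, which is how the paper argues, is to exploit that the object $V\in\ideal$ being presented is itself available as a tensor factor. Write $V$ as a retract of $U\otimes W'$ with $W'\in\cat_{g'}$ (the paper uses this presentation of $\ideal_{\cat_{g'}}$ with the generator as the outermost factor; keeping it outermost is exactly what will make the two generators adjacent). Then $V$ is also a retract of $(V\otimes V^*\otimes U)\otimes W'$: one inserts the pair $V\otimes V^*$ around the existing $V$--strand using $\lcoev_V$ and $\lev_V$, so the zig-zag identity $\Id_V=(\Id_V\otimes\lev_V)(\lcoev_V\otimes\Id_V)$ applies and no dimension scalar appears --- this is precisely the loophole to the obstruction you raised about $Z\otimes Z^*\otimes W$. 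Now the auxiliary factor $V\otimes V^*\otimes U$ lies in $\ideal$, so the hypothesis $g\in\Gr_\ideal$ applies to it: it is a retract of $U'\otimes W$ with $W\in\cat_g$, whence $V$ is a retract of $U'\otimes(W\otimes W')$ with $W\otimes W'\in\cat_{gg'}$ by the grading axioms, giving $gg'\in\Gr_\ideal$; the mirror argument (generators placed as leftmost factors) gives $g'g\in\Gr_\ideal$. Note this argument needs no decomposition into homogeneous components and no use of the fourth grading axiom, so your degree bookkeeping can be dispensed with entirely once this bootstrapping step is in place.
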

\begin{proof}
  Let $g,g'\in\Gr_\ideal$ and $V\in\ideal$, then $V$ is a retract of $U\otimes W'$ for some $U\in \cat$ and $W'\in\cat_{g'}$ implying $V$ is also a retract of
   $(V\otimes V^*\otimes U)\otimes W'$ (using $\lcoev_V$ and
  $\lev_V$).  So we can assume $U\in\ideal$ but then $U$ is retract of $U'\otimes W$ for some $U'\in \cat$ and $W\in\cat_g$ implying $V$ is a retract of $U'\otimes (W\otimes W')$. Thus,  $gg'\in\Gr_\ideal$.  A similar argument for the left
  tensor product implies that $g'g\in\Gr_\ideal$.
\end{proof}

   \begin{theorem}\label{T:finitewithoutboundary}
Let $\ideal$ be a graded finite ideal of $\cat$.  Assume $\Gr$ is abelian or $\Gr=\Gr_\ideal$.    Let $\Sigma$ be a compact oriented
  surface without boundary then for every $h\in H_1(\Sigma;\Gr)$ it holds
  $\dim_{\FK}\Skein^h_\ideal(\Sigma)<\infty$. 
  \end{theorem}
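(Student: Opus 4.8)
The plan is to deduce the closed case from the bounded case already handled in Proposition~\ref{prop:finitewithboundary} by capping off a disk, and then to control the grading. First I would reduce to the case $\Sigma$ connected: the skein module of a disjoint union is the tensor product of the skein modules of the pieces, compatibly with the $H_1$--grading of Proposition~\ref{P:graduation}, and a tensor product of finitely many finite dimensional spaces is finite dimensional. So assume $\Sigma$ is connected of genus $g$. Fix a small embedded open disk $D\subset\Sigma$ and set $\Sigma'=\Sigma\setminus D$, a compact connected surface with non-empty boundary. Any closed $\ideal$--admissible ribbon graph in $\Sigma$ can be isotoped off $D$, and any $\ideal$--skein relation in $\Sigma$ can be isotoped so that its supporting box lies inside $\Sigma'$; together with Proposition~\ref{P:MappingClassActs} this shows that the inclusion induces a surjective $\FK$--linear map $\iota_*\colon \mathcal V\twoheadrightarrow\Skein_\ideal(\Sigma)$, where $\mathcal V\subseteq\Skein_\ideal(\Sigma')$ is the span of the closed $\ideal$--admissible ribbon graphs. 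The map $\iota_*$ is homogeneous for the map $\phi\colon H_1(\Sigma';\Gr)\to H_1(\Sigma;\Gr)$ induced by the inclusion (here $H_1(-;\Gr)$ is the set of group-like elements of $\Skein_{\cat_\Gr}(-)$), so
$$\Skein^{h}_{\ideal}(\Sigma)=\iota_*\!\left(\bigoplus_{h'\in\phi^{-1}(h)}\mathcal V\cap\Skein^{h'}_{\ideal}(\Sigma')\right).$$
Each summand is finite dimensional by Proposition~\ref{prop:finitewithboundary}, so everything comes down to making the a priori infinite index set $\phi^{-1}(h)$ effectively finite.

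When $\Gr$ is abelian this is immediate. By Lemma~\ref{L:H1ab} the set $H_1(-;\Gr)$ is ordinary homology, and capping off a $2$--disk does not change it (both $H_1(\Sigma';\Z)$ and $H_1(\Sigma;\Z)$ are free of rank $2g$ and the inclusion induces an isomorphism, hence so does $-\otimes_\Z\Gr$). Thus $\phi$ is bijective, $\phi^{-1}(h)$ is a single element, and $\Skein^{h}_{\ideal}(\Sigma)$ is a quotient of the finite dimensional space $\mathcal V\cap\Skein^{h'}_{\ideal}(\Sigma')$.

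When $\Gr=\Gr_\ideal$ I would instead use a \emph{boundary twist} move to collapse the fibre. Let $\Gamma$ be a closed $\ideal$--admissible ribbon graph in $\Sigma'$ and, using Proposition~\ref{prop:ddd}, arrange that $\Gamma$ has an $\ideal$--colored edge $e$ of color $V\in\ideal$ a subarc of which runs in a collar of $\partial\Sigma'$. For any $g\in\Gr$ the hypothesis $\Gr=\Gr_\ideal$ together with the argument in the proof of Proposition~\ref{P:G_I} lets us write $V$ as a retract of $U\otimes W$ with $U\in\ideal$ and $W\in\cat_g$. Inserting this retract into $e$ is an $\ideal$--skein relation; splitting off the $W$--strand and isotoping it once around $\partial\Sigma'$ (keeping it disjoint from the rest of $\Gamma$, which is possible after the collar isotopy) yields a closed $\ideal$--admissible ribbon graph $\Gamma_g$ in $\Sigma'$ whose degree is that of $\Gamma$ multiplied by the class of a $g$--colored $\partial\Sigma'$--parallel loop, and with $\iota_*[\Gamma_g]=\iota_*[\Gamma]$ in $\Skein_\ideal(\Sigma)$ because in $\Sigma$ that loop bounds the filled-in disk $D$, so the wrapping undoes by an isotopy. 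Iterating, $\iota_*[\Gamma]$ depends on $\Gamma$ only up to modifying the $\Sigma'$--degree by an element of the subgroup $N\le H_1(\Sigma';\Gr)$ generated by the $g$--colored $\partial\Sigma'$--parallel loops, $g\in\Gr$. On the other hand, two closed group-like $\cat_\Gr$--graphs in $\Sigma'$ that become isotopic (hence skein equivalent, using that skein relations preserve degree by Proposition~\ref{P:graduation}) in $\Sigma$ differ by a sequence of finger moves pushed across $D$, each changing the $\Sigma'$--degree by exactly such a $\partial\Sigma'$--parallel loop; hence every fibre $\phi^{-1}(h)$ is a single coset of $N$. Fixing one $h_0\in\phi^{-1}(h)$, the two facts give $\iota_*(\mathcal V\cap\Skein^{h'}_{\ideal}(\Sigma'))=\iota_*(\mathcal V\cap\Skein^{h_0}_{\ideal}(\Sigma'))$ for all $h'\in\phi^{-1}(h)$, so $\Skein^{h}_{\ideal}(\Sigma)$ is a quotient of the finite dimensional space $\mathcal V\cap\Skein^{h_0}_{\ideal}(\Sigma')$.

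The easy part is the reduction in the first paragraph and the abelian case; the main obstacle will be the third paragraph, namely (i) carrying out the boundary twist move cleanly — isotoping a chosen $\ideal$--colored subarc into a collar and routing the split-off $\cat_g$--strand around $\partial\Sigma'$ without creating crossings — and, more importantly, (ii) proving that the fibres of $\phi\colon H_1(\Sigma';\Gr)\to H_1(\Sigma;\Gr)$ on the non-commutative $H_1$ are precisely single cosets of $N$, i.e.\ that capping off a disk collapses $H_1(\Sigma';\Gr)$ exactly along the $\partial\Sigma'$--parallel colored loops. I would first check this bookkeeping in the genus-$0$ and abelian cases, where $\phi$ is forced to be bijective, before treating the general non-commutative situation.
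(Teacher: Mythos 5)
Your reduction to the bounded case by capping off a disk, and the abelian case (where $\phi$ is a bijection by Lemma~\ref{L:H1ab}), are fine, and this route is genuinely different from the paper's: the paper works with a one--vertex cellularization of the closed surface ($2g$ loops $c_i$ and one $2$--cell) and proves directly that every homogeneous graph of degree $h$ is skein equivalent to a bouquet meeting each $c_i$ in a single $V_{h_i}$--colored edge, tracking the intersection degrees through the isotopy and using exactly your retract move ($W_l$ a retract of $W_g\otimes W_{g^{-1}l}$, possible since $\Gr=\Gr_\ideal$) when a strand must cross the vertex. However, in the case $\Gr=\Gr_\ideal$ your argument has a genuine gap at the step you call (ii), and the sketch you give for it does not work as stated. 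First, $H_1(\Sigma';\Gr)$ is only the set of group-like classes in $\Skein_{\cat_\Gr}(\Sigma')$ and, as the paper points out, it is \emph{not} a group when $\Gr$ is non-abelian; so ``the subgroup $N$'' and ``cosets of $N$'' are not defined, and the statement must be recast as: the fibres of $\phi$ are the classes of the equivalence relation generated by disjoint union with $g$--colored $\partial\Sigma'$--parallel loops. Second, your justification of the fibre description assumes that two group-like $\cat_\Gr$--graphs in $\Sigma'$ with the same degree in $\Sigma$ are \emph{isotopic} in $\Sigma$ and hence differ by finger moves across $D$; but equality in $\Skein_{\cat_\Gr}(\Sigma)$ is generated by isotopies \emph{and} box relations (fusions and splittings of strands), so they need not be isotopic. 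You would have to argue that the boxes can be kept inside $\Sigma'$ (where they do not change the $\Sigma'$--degree) so that only isotopies cross the capped disk, and then identify, inside $\Skein_{\cat_\Gr}(\Sigma')$, ``inserting a $\partial\Sigma'$--parallel kink into a $g$--colored strand'' with ``disjoint union with a $g^{\pm1}$--colored $\partial\Sigma'$--parallel loop.'' This identification is true (in $\cat_\Gr$ all (co)evaluations are identities, so the kink can be cut off by a skein relation), but it is precisely the content of (ii) and is nowhere proved.

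There is also a smaller unaddressed point at the end of your third paragraph: even granting that the fibre is one equivalence class, to conclude $\iota_*\bigl(\mathcal V\cap\Skein^{h'}_\ideal(\Sigma')\bigr)=\iota_*\bigl(\mathcal V\cap\Skein^{h_0}_\ideal(\Sigma')\bigr)$ you need to pass from $h'$ to $h_0$ using only loop \emph{additions} (the boundary twist only adds loops, it does not remove them); this requires observing that removing a $g$--colored boundary-parallel loop is the same, up to skein equivalence in $\cat_\Gr$, as adding a $g^{-1}$--colored one and cancelling, again a fact about $\Skein_{\cat_\Gr}(\Sigma')$ that must be checked. None of these verifications is deep, and your boundary-twist move is essentially the paper's retract trick transplanted to a collar of $\partial\Sigma'$, but as written the central claim in the non-abelian case is asserted rather than proved, so the proposal is incomplete exactly where the hypothesis $\Gr=\Gr_\ideal$ has to do its work.
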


\begin{proof}
It is sufficient to prove the statement for $\Sigma$ a compact connected surface.  Consider a cellularization of $\Sigma$ consisting in a single vertex $v$, $2g$ closed curves $c_1,\ldots, c_{2g}$ and one disk $D$. If $\Gamma$ is a homogeneous graph of degree $h$, we can as above suppose that it intersects each $c_i$ transversally and that it is  contained in $\Sigma\setminus v$. Arguing as in the proof of Proposition \ref{prop:finitewithboundary} we see that $\Gamma$ is skein equivalent to a linear combination of graphs of the form of a bouquet of circle $B$ where each edge intersects a single $c_i$ once and is colored by some $V_{h_i}\in \ideal$ and these edges end up in a single coupon colored by some $f\in \Hom_{\cat}(\unit,V_{h_1}\otimes V_{h_2}\otimes V_{h_1}^*\otimes V_{h_2}^*\otimes \cdots\otimes V_{h_{2g-1}}\otimes V_{h_{2g}}\otimes V_{h_{2g-1}}^*\otimes V_{h_{2g}}^*)$ contained in $D$.  In particular, $h$ can be represented by $h_B$ which is  the bouquet $B$ where the edge intersecting $c_i$ is colored by $h_i$.  

We claim if $\Gamma'$ is a homogeneous graph of degree $h$ then there exists $\Gamma''$ which is skein equivalent to $\Gamma'$ such that for every $i$, $\Gamma''$ has a single edge interesting  $c_i$ which is colored by $V_{h_i}$.   This will conclude the proof because   
$$\dim_\FK(\Skein_\ideal^h(\Sigma))\leq \dim_\FK \Hom_{\cat}(\unit,V_{h_1}\otimes V_{h_2}\otimes V_{h_1}^*\otimes V_{h_2}^*\otimes \cdots\otimes V_{h_{2g-1}}\otimes V_{h_{2g}}\otimes V_{h_{2g-1}}^*\otimes V_{h_{2g}}^* )<\infty.$$

Let us prove the claim.  Recall the group-like
$\cat_G$-colored graph $h_\Gamma$ of $\Gamma$ given in Definition \ref{D:holonomy}.  Since $h_{\Gamma'}$ is skein equivalent to $h_{B}$ in $\Skein_{\cat_\Gr}(\Sigma)$ there exists a finite sequence of isotopies and $\cat_\Gr$--skein relations with boxes contained in the 2-cell $D$ transforming $h_{\Gamma'}$ into $h_{B}$.  A skein relation in $D$ does not change the degrees induced on the edges.  If during an isotopy $h_{\Gamma'}$ does not cross the vertex $v$ then the degrees induced on each edge $c_i$ are unchanged during the isotopy.  In the case when $\Gr$ is abelian the intersection  degree does not change even if isotopy crosses the vertex $v$.

\vspace{3pt}
\begin{minipage}{0.6\linewidth}
  Finally, in the case $\Gr=\Gr_\ideal$, if an isotopy has a
  $g\in \Gr$ colored edge crossing $v$ then assuming there is a
  $W_l\in \ideal \cap \cat_{l}$ colored edge near $v$, we apply the
  skein equivalence given in the following figure to $\Gamma'$ where
  the coupons in the graph represents $W_l$ as a retract of
  $W_g\otimes W_{g^{-1}l}$ which exist since $l\in \Gr_{\ideal}$.
\end{minipage}
\begin{minipage}{0.4\linewidth}
  \begin{equation*}
  \epsh{fig7a}{14ex}\put(-20,5){\ms v}\put(0,5){\ms{W_l}}\quad\longrightarrow\epsh{fig7b}{14ex}\longrightarrow\epsh{fig7c}{14ex}
\end{equation*}
\end{minipage}
After these modifications on $\Gamma'$ we fuse the resulting graph as
we did for $\Gamma$ in the complement of $v$ then the resulting graph
is $\Gamma''$.
   \end{proof}

\begin{remark}
One can prove in a similar way that if $\cat_\Gr$ is ribbon and $\ideal$ is a graded finite ideal of $\cat$ then similar statement to Theorem \ref{T:finitewithoutboundary} holds for compact 3-manifolds.  
\end{remark}

\section{Examples}\label{S:Examples}
\subsection{Modular categories}
In this subsection we interpret skein modules as TQFT spaces for certain semisimple categories.
Suppose that $\cat$ is a modular category in the sense of Turaev (i.e.\ it is ribbon, finite semisimple, and with invertible $S$-matrix). Here, since $\cat$ is semisimple its only ideal is $\cat$ itself and the only m-trace (up to scalar) is the usual quantum trace.  In this case, the definition of an admissible skein module reduces to the usual definition.   Let $\ZZ_{RT,\cat}$ be the Reshetikhin--Turaev  TQFT functor associated to $\cat$ as in \cite{Tu}.  
The following lemma is due to J.\ Roberts, a proof can be found in \cite{Si2000}.
\begin{lemma}\label{lem:skeinboundary}
Given a compact $3$-manifold $M$ whose boundary $\Sigma$ is a possibly disconnected closed surface then $\ZZ_{RT,\cat}(\Sigma)=\Skein_\cat(M)$.
\end{lemma}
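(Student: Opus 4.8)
The plan is to produce mutually inverse linear maps between the Reshetikhin--Turaev TQFT vector space $\ZZ_{RT,\cat}(\Sigma)$ and the skein module $\Skein_\cat(M)$, and to check that both composites are the identity. Recall that in this semisimple setting the ideal is all of $\cat$, the unique m-trace is the quantum trace, and every $\cat$-admissible graph is just a $\cat$-colored ribbon graph, so $\Skein_\cat(M)$ is the usual skein module. First I would recall that $\ZZ_{RT,\cat}(\Sigma)$ is built as a quotient of the free $\FK$-module on pairs $(N,\Gamma)$, where $N$ is a compact $3$-manifold with $\partial N=\Sigma$ and $\Gamma\subset N$ a $\cat$-colored ribbon graph, by the relations that $(N,\Gamma)$ is sent to $\ZZ_{RT,\cat}(N\cup_{\bar\Sigma}(-M'))(v)$ under gluing against a fixed bounding datum; concretely, after fixing our $M$ as the reference handlebody-type filling, every class in $\ZZ_{RT,\cat}(\Sigma)$ is represented by a graph in $M$ itself (by cutting $N$ along $\Sigma$ and gluing in $M$, i.e.\ by surgery), so there is a surjection $\Skein_\cat(M)\twoheadrightarrow \ZZ_{RT,\cat}(\Sigma)$ sending $[\Gamma]$ to the class of $(M,\Gamma)$. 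The content is that this map is well defined (skein relations in $M$ hold in $\ZZ_{RT,\cat}$, which is immediate since the RT invariant of a closed $3$-manifold containing a local $\cat$-skein relation vanishes by $\FK$-linearity of $F$) and injective.

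Next I would construct the inverse. Given a class $x\in\ZZ_{RT,\cat}(\Sigma)$, present it by some $(N,\Gamma)$ with $\partial N=\Sigma$; choose a surgery presentation exhibiting $N$ as obtained from $M$ by surgery on a framed link $L\subset \mathrm{int}(M)$, and push $\Gamma$ into $M$ along the surgery. Define $\Phi(x)\in\Skein_\cat(M)$ as the skein obtained by replacing each surgery component $L_i$ by the Kirby color $\coh=\sum_i d_i\, i$ (sum over simple objects weighted by quantum dimensions), suitably normalized by powers of the Gauss sums/anomaly factors as in the RT construction. The key steps are then: (i) check $\Phi$ is independent of the chosen surgery presentation, which reduces to invariance under Kirby moves (handle slides and blow-ups); handle slides translate into the sliding property of the Kirby color $\coh$, which is a standard skein identity valid in $\Skein_\cat(M)$ because it is a purely local skein relation (it holds in the plane), and blow-ups contribute exactly the anomaly normalization; (ii) check $\Phi$ respects the defining relations of $\ZZ_{RT,\cat}(\Sigma)$, i.e.\ that if $(N,\Gamma)$ and $(N',\Gamma')$ give the same vector then their images agree — this again comes down to the fact that gluing two such data along $\Sigma$ and comparing is computed by the RT invariant, which by construction is what $\Phi$ evaluates.

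Finally I would verify the two composites. The composite $\Skein_\cat(M)\to\ZZ_{RT,\cat}(\Sigma)\to\Skein_\cat(M)$ starts from a graph already in $M$, presents it with the empty surgery link, and returns the same graph, so it is the identity. For the other composite $\ZZ_{RT,\cat}(\Sigma)\to\Skein_\cat(M)\to\ZZ_{RT,\cat}(\Sigma)$, one takes a class represented by $(N,\Gamma)$, forms $\Phi$ as a $\coh$-colored skein in $M$, and then reinterprets that skein back in $\ZZ_{RT,\cat}(\Sigma)$; the point is that $\coh$-colored unknotted surgery circles in $M$ exactly implement the surgery taking $M$ back to $N$, so one recovers the class of $(N,\Gamma)$. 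The main obstacle I expect is bookkeeping of the anomaly: the RT TQFT is only projective unless one works with $p_1$-structures or an extension of the cobordism category, so care is needed to make the normalizing scalars in $\Phi$ well defined and to phrase the isomorphism so that the framing anomaly cancels on both sides. Everything else is a translation of the standard ``skein-theoretic'' description of RT TQFT (as in Roberts, and as recalled in \cite{Si2000}) into the language of this paper, so I would cite those sources for the detailed verification of Kirby-move invariance and only spell out the dictionary between $\coh$-recoloring and surgery.
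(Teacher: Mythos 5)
First, a point of reference: the paper does not prove this lemma at all; it attributes the result to Roberts and cites \cite{Si2000} for a proof, so your sketch must stand on its own against that standard argument. Your architecture (the natural map $\Skein_\cat(M)\to \ZZ_{RT,\cat}(\Sigma)$ sending $[\Gamma]$ to the vector of the filling $(M,\Gamma)$, surjectivity via surgery presentations with the Kirby color $\coh$, anomaly bookkeeping) is the right one, but there is a genuine gap at your step (ii). The well-definedness of your inverse $\Phi$ is not a consequence of Kirby-move invariance: two fillings $(N,\Gamma)$ and $(N',\Gamma')$ can represent the same vector of $\ZZ_{RT,\cat}(\Sigma)$ without being related by any geometric move, since in the universal-construction picture you invoke the defining relation is the kernel of the pairing against all fillings of $-\Sigma$, not a relation generated by surgery equivalences. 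After you convert every filling into an $\coh$-colored skein in $M$ itself, the statement you need --- ``if a linear combination of skeins in $M$ maps to zero in $\ZZ_{RT,\cat}(\Sigma)$, then it is zero in $\Skein_\cat(M)$'' --- is exactly the injectivity of the natural map, i.e.\ the nontrivial half of the lemma; using it to define $\Phi$ is circular, and your verification of the composite $\ZZ_{RT,\cat}(\Sigma)\to\Skein_\cat(M)\to\ZZ_{RT,\cat}(\Sigma)$ silently presupposes it as well.

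What the cited sources actually supply at this point, and what your plan is missing, is an a priori control on $\Skein_\cat(M)$: take a handle decomposition of $M$ as a handlebody $H$ with $2$- and $3$-handles attached, isotope skeins off the $3$-handles and off the co-cores of the $2$-handles so that $\Skein_\cat(M)$ is a quotient of $\Skein_\cat(H)$ by handle-sliding relations, identify $\Skein_\cat(H)$ with $\ZZ_{RT,\cat}(\partial H)$ directly (expanding skeins near a spine in the simple colors, using semisimplicity and the nondegenerate pairing with the complementary handlebody), and then use the encircling properties of $\coh$ to bound the quotient by $\dim \ZZ_{RT,\cat}(\Sigma)$; combined with your surjectivity this yields the isomorphism. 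Two smaller corrections: Kirby calculus for framed links in a general compact $3$-manifold relative to the boundary is Roberts' theorem, not the Fenn--Rourke theorem for $S^3$, so it must be invoked in that form; and the anomaly normalization is harmless here but should be fixed once and for all by choosing a surgery presentation relative to the fixed filling $M$.
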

If the boundary of a compact oriented surface $\Sigma$ is non-empty then one can check that $\Skein_{\cat}(\Sigma)$ is isomorphic to $\ZZ_{RT,\cat}(D(\Sigma))$ where $D(\Sigma)$ is the handlebody   obtained by thickening $\Sigma$. 
\begin{lemma}\label{lem:endomorphisms}
  Let $\Sigma$ be a closed surface and $H$ be a handlebody
  with $\partial H=\Sigma$.  Then $\Skein_{\cat}(\Sigma)$ is
  isomorphic to $End(\ZZ_{RT,\cat}(\Sigma))$. 
  \end{lemma}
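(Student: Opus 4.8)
The plan is to identify both sides with the skein module of $\Sigma\times[0,1]$. Concretely, I would build a chain of isomorphisms of $\FK$--vector spaces
$$\End\bigl(\ZZ_{RT,\cat}(\Sigma)\bigr)\;\cong\;\ZZ_{RT,\cat}(\wb\Sigma\sqcup\Sigma)\;\cong\;\Skein_\cat(\Sigma\times[0,1])\;\cong\;\Skein_\cat(\Sigma),$$
where $\wb\Sigma$ denotes $\Sigma$ with the opposite orientation. The first isomorphism is TQFT formalism, the second is Lemma~\ref{lem:skeinboundary}, and the third is a generic projection argument parallel to the proof that $\Skein_\ideal(D^2)\cong\Skein_\ideal(B^3)$ in Corollary~\ref{T:dim3SphereSkein}. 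One may assume $\Sigma$ connected, though this plays no role.

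For the first isomorphism I would invoke that the Reshetikhin--Turaev invariant of a modular category assigns a finite dimensional space to a closed surface, that orientation reversal corresponds to linear duality, $\ZZ_{RT,\cat}(\wb\Sigma)\cong\ZZ_{RT,\cat}(\Sigma)^{*}$, and that the functor is monoidal, $\ZZ_{RT,\cat}(\Sigma_{1}\sqcup\Sigma_{2})\cong\ZZ_{RT,\cat}(\Sigma_{1})\otimes_{\FK}\ZZ_{RT,\cat}(\Sigma_{2})$. Since $\ZZ_{RT,\cat}(\Sigma)$ is finite dimensional, $\End(\ZZ_{RT,\cat}(\Sigma))\cong\ZZ_{RT,\cat}(\Sigma)^{*}\otimes_{\FK}\ZZ_{RT,\cat}(\Sigma)\cong\ZZ_{RT,\cat}(\wb\Sigma\sqcup\Sigma)$; the projective anomaly of the RT TQFT is harmless here since only the underlying vector space is needed. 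For the second isomorphism, note that $\partial(\Sigma\times[0,1])=\wb\Sigma\sqcup\Sigma$, so Lemma~\ref{lem:skeinboundary} applied to $M=\Sigma\times[0,1]$ gives $\ZZ_{RT,\cat}(\wb\Sigma\sqcup\Sigma)\cong\Skein_\cat(\Sigma\times[0,1])$.

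For the third isomorphism I would argue as in Corollary~\ref{T:dim3SphereSkein}. The inclusion $\Sigma\cong\Sigma\times\{\tfrac12\}\hookrightarrow\Sigma\times[0,1]$ induces a linear map $\Skein_\cat(\Sigma)\to\Skein_\cat(\Sigma\times[0,1])$ by Proposition~\ref{P:MappingClassActs}. For the inverse, put any ribbon graph in $\Sigma\times[0,1]$ in generic position for the projection $\Sigma\times[0,1]\to\Sigma$ and replace each double point by a coupon colored with the braiding of $\cat$, obtaining a ribbon graph in $\Sigma$; this is well defined because two generic projections of an isotopic graph differ by a finite sequence of Reidemeister-type moves and moves involving coupons, each of which is a $\cat$--skein relation in $\Sigma$ (using that $\cat$ is ribbon and that the planar functor $F$ is monoidal), and because a $\cat$--skein relation inside a box of $\Sigma\times[0,1]$ projects to one inside a box of $\Sigma$, the 3-dimensional Reshetikhin--Turaev functor being computed from $F$ precisely by such a projection. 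The two maps are evidently mutually inverse, and tracking the gluing morphisms shows the composite isomorphism carries the stacking product on $\Skein_\cat(\Sigma)$ of Proposition~\ref{prop:boundaryaction} to composition in $\End(\ZZ_{RT,\cat}(\Sigma))$. The handlebody $H$ enters only through the concrete model $\ZZ_{RT,\cat}(\Sigma)=\Skein_\cat(H)$ of Lemma~\ref{lem:skeinboundary}, which any bounding 3-manifold could supply. I expect the main obstacle to be purely the bookkeeping in this last step: arranging genericity, listing the moves relating two projections, and checking that each descends to a skein relation --- routine but lengthy, with no new idea beyond Corollary~\ref{T:dim3SphereSkein}.
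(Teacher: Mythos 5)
Your proposal follows essentially the same route as the paper: the paper's proof is exactly the chain $\Skein_\cat(\Sigma)=\Skein_\cat(\Sigma\times[-1,1])=\ZZ_{RT,\cat}(\Sigma\sqcup\wb\Sigma)=\ZZ_{RT,\cat}(\Sigma)\otimes\ZZ_{RT,\cat}(\Sigma)^*=\End(\ZZ_{RT,\cat}(\Sigma))$, using Lemma~\ref{lem:skeinboundary} for the cylinder. You merely make explicit the first identification $\Skein_\cat(\Sigma)\cong\Skein_\cat(\Sigma\times[0,1])$ via the projection argument of Corollary~\ref{T:dim3SphereSkein}, which the paper leaves implicit, so the argument is correct and matches the paper's.
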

  
\begin{proof}
 Lemma \ref{lem:skeinboundary} implies
 $$\Skein_\cat(\Sigma)=\Skein_\cat(\Sigma\times [-1,1])=\ZZ_{RT,\cat}(\Sigma\sqcup \overline{\Sigma})=\ZZ_{RT,\cat}(\Sigma)\otimes \ZZ_{RT,\cat}(\overline{\Sigma})=\ZZ_{RT,\cat}(\Sigma)\otimes \ZZ_{RT,\cat}({\Sigma})^*=\operatorname{End}(V(\Sigma)).$$
\end{proof}

Finally, let $\catd$ be the spherical fusion category and let $\ZZ_{TV,\catd}$ be its associated  Tureav-Viro TQFT.
It follows from \cite{Bart2022} (see also \cite{Kirillov2012}) that if $\Sigma$ is an oriented closed surface then $\ZZ_{TV}(\Sigma)\simeq \Skein_\catd(\Sigma)$ and the representation of the mapping class group of $\Sigma$ coming from $\ZZ_{TV,\catd}$ is isomorphic to the representation given in Proposition~\ref{P:MappingClassActs}.

\subsection{Unrestricted quantum group}
Let $\ell\ge2$, $\xi=e^{2i\pi/\ell}$ and $r=\ell/2$ if $\ell$ is even and  $r=\ell$ otherwise.
Consider the unrestricted quantum group $U=U_q(\mathfrak{sl}_2)$ with generators $E,F,K^{\pm1}$ and relations $KE=q^2EK$, $FK=q^2KF$, $[E,F]=\frac{K-K^{-1}}{q-q^{-1}}$.
Let $\cat$  be the category of finite dimensional weight modules over $U$ and let $\Proj$ be its ideal of projective objects.    From \cite{GP2013Top},  $\cat$ is a pivotal $\C$--category and is graded by the group $\Gr$ of pairs $\bigl( \bigl( \begin{smallmatrix}1 &\ve \\ 0 & \kappa \end{smallmatrix}\bigr), \bigl( \begin{smallmatrix}\kappa &0 \\ \vp & 1 \end{smallmatrix}\bigr) \bigr)$ for $\ve,\vp\in\C,\kappa\in\C^*$,
where the group structure is given by component wise matrix multiplication.  
In particular, for each
$g\in\Gr$
let $I_g$ be the ideal of $U$ generated by $K^r-\kappa$, $E^r-(q-q^{-1})^{-r}\ve$ and $F^r-(-1)^\ell(q-q^{-1})^{-r}\vp\kappa^{-1}$, then $\cat_g$ is the full subcategory of $\cat$ whose objects are modules where $I_g$ acts by zero.
Moreover, $\cat_g$ is the category of finite dimensional 
$U_g$-modules where $U_g=U/I_g$ and its left regular representation is a generator of $\Proj$ in degree $g$; implying $\Proj$ is graded finite.
  Note that  $\Gr=\Gr_{\Proj}$ (see Proposition \ref{P:G_I})
  because
 $\Proj$ is contained in every ideal.
 Thus, by Theorem \ref{T:finitewithoutboundary} we have $\Skein^g_{\Proj}(\Sigma)$ is finite dimensional.  
The TV-type TQFT spaces of \cite{GP2013Top} naturally map to $\Skein_{\Proj}$ in a way which is compatible with the action of the mapping class group.

\subsection{Basic classical Lie superalgebras}
Here we discuss rich examples of nested ideals coming from a basic classical Lie superalgebra $\g=\g_{\p 0}\oplus \g_{\p 1}$, for more details see for example \cite{GKP1, GKP2}.   Let $\cat$ be the category of finite dimensional $\g$-supermodules which are completely reducible as $\g_{\p 0}$-modules and all $\g$-supermodule homomorphisms which preserve the $\Z_2$-grading.
For $V\in\cat$ let $\ideal_V$ be the ideal of all supermodules which appear as a direct summand of $V\otimes W$  for some $W$ in $\cat$.  Given a simple $\g$-supermodule $L(\lambda)$
 of highest weight $\lambda$, let $\atyp(\lambda)$ denote the atypicality of $\lambda$.  In \cite[Conjecture 6.3.2]{GKP1}, the following generalization of the Kac-Wakimoto conjecture was given:
\begin{conjecture} Let $\g$ be a basic classical Lie superalgebra. Then the simple $\g$-supermodule $L(\lambda)$ admits an m-trace $\mt$ on $\ideal_{L(\lambda)}$.  If $L(\mu)$ is another simple supermodule with
$\atyp(\mu) \leq \atyp(\lambda)$,
then $L(\mu)$ is an object of $\ideal_{L(\lambda)}$, and
$\atyp(\mu) \leq \atyp(\lambda)$ if and only if $\mt_{L(\mu)}(\Id_{L(\mu)})\neq 0.$
\end{conjecture}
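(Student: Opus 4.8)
The plan is to split the conjecture into three pieces that feed into one another: (A) the description of membership in the tensor ideal, $L(\mu)\in\ideal_{L(\lambda)}\iff\atyp(\mu)\le\atyp(\lambda)$; (B) the existence and essential uniqueness of an m-trace $\mt$ on $\ideal_{L(\lambda)}$; and (C) the nonvanishing $\mt_{L(\mu)}(\Id_{L(\mu)})\ne 0$ for every simple object of $\ideal_{L(\lambda)}$. The common toolkit is the Duflo--Serganova cohomology functor $\mathrm{DS}_x$ attached to a self-commuting odd element $x\in\g_{\p 1}$ together with the associated variety $X_V\subseteq\{x\in\g_{\p 1}:[x,x]=0\}$ of a supermodule; recall that $\mathrm{DS}_x$ is monoidal, that $X_{V\otimes W}=X_V\cap X_W$, and that for a simple module $X_{L(\lambda)}$ is the closure of a single $G_{\p 0}$-orbit whose rank is $\atyp(\lambda)$. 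I would first dispose of the defect-one cases ($\mathfrak{osp}(1|2n)$ being defect zero is immediate, and $D(2,1;\alpha)$, $G(3)$, $F(4)$ have defect one), where $\atyp$ only takes the values $0$ and $1$ and the statement reduces to the known behaviour of typical blocks and the ideal of projectives, and use this as a model for the general argument.

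For (A): since $L(\mu)$ lies in $\ideal_{L(\lambda)}$ exactly when it is a direct summand of $L(\lambda)\otimes W$ for some $W\in\cat$, part (A) is the assertion that the thick tensor ideal generated by $L(\lambda)$ consists of the modules all of whose composition factors have atypicality at most $\atyp(\lambda)$. The inclusion ``$\Rightarrow$'' (tensoring cannot raise atypicality) is read off from $X_{L(\lambda)\otimes W}\subseteq X_{L(\lambda)}$. The reverse inclusion is the substantive half: for each $\mu$ with $\atyp(\mu)\le\atyp(\lambda)$ one must exhibit $W$ with $L(\mu)\mid L(\lambda)\otimes W$, for which I would invoke the classification of thick tensor ideals by associated varieties (available in type $A$ after Serganova and coworkers, compare also the tensor-ideal viewpoint of \cite{HW2022}), the geometric input being $\overline{G\cdot X_{L(\mu)}}\subseteq\overline{G\cdot X_{L(\lambda)}}$ when $\atyp(\mu)\le\atyp(\lambda)$.

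For (B): by the general trace machinery, an ideal generated by a single object admits an m-trace, unique up to scalar, as soon as that object is \emph{ambidextrous}, i.e.\ the left and right partial traces of $\Id_{L(\lambda)\otimes L(\lambda)^{*}}$ agree as elements of $\End_\cat(L(\lambda))=\kk$. I would prove ambidexterity either by descent from the m-trace on the ideal of projectives constructed in \cite{GKP1,GKP2} together with the fact that $\cat$ is unimodular up to a Berezinian twist, or, in the defect-one case, by a direct Clifford-algebra computation after applying $\mathrm{DS}_x$. One then gets a canonical (projective class of) m-trace on $\ideal_{L(\lambda)}$; what remains for (C) is that it does not degenerate at any simple of the ideal.

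For (C): this is a relative form of the Kac--Wakimoto conjecture and is the deepest point. Mirroring Serganova's proof of the classical statement, I would choose $x$ of rank $\atyp(\lambda)$, so that $\mathrm{DS}_x$ lands in the module category of a smaller basic classical superalgebra $\g_x$ in which $L(\lambda)$ becomes typical, show that $\mathrm{DS}_x$ carries $\mt$ to a nonzero multiple of the canonical trace on the corresponding typical ideal of $\g_x$, and conclude because there the modified dimension of a typical simple is a genuine Weyl-type number that does not vanish. Applying this to a general simple $L(\mu)$ of $\ideal_{L(\lambda)}$ requires controlling $\mathrm{DS}_x(L(\mu))$ -- the statement that $\mathrm{DS}_x$ of a simple is, up to parity, a finite direct sum of simples -- which is known in type $A$ and only partially elsewhere. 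The main obstacle is therefore concentrated in (C): proving compatibility of the modified trace with the Duflo--Serganova functor, and supplying the ``$\mathrm{DS}$ of simples'' input outside type $A$; without the latter the conjecture would be established only for $\mathfrak{gl}(m|n)$, for $\mathfrak{osp}$ in small defect, and for the defect $\le 1$ exceptional superalgebras.
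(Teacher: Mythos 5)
The statement you are asked about is not a theorem of this paper at all: it is stated as a \emph{conjecture} (the generalized Kac--Wakimoto conjecture of \cite{GKP1}), and the paper offers no proof, only the remark that it is known for $\mathfrak{gl}(m|n)$ (Serganova \cite{serganova4}) and $\mathfrak{osp}(m|2n)$ (Kujawa \cite{Kuj2012}). So there is no paper argument to measure your proposal against, and the relevant question is whether your proposal actually closes the conjecture. It does not, and you say so yourself: step (C) rests on two inputs that are not available in the stated generality --- compatibility of the m-trace with the Duflo--Serganova functor $\mathrm{DS}_x$, and the assertion that $\mathrm{DS}_x$ of a simple is (up to parity) a direct sum of simples --- and step (A) invokes a classification of thick tensor ideals by associated varieties that is likewise established only in type $A$. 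A plan whose decisive steps are themselves open problems is a research program, not a proof, so as a proof of the conjecture as stated (for \emph{all} basic classical $\g$) there is a genuine gap.

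Two further points deserve care even within the program. First, your ``defect-one'' shortcut is not a reduction to known facts: for $D(2,1;\alpha)$, $G(3)$, $F(4)$ and an atypical $\lambda$ the ideal $\ideal_{L(\lambda)}$ is strictly larger than $\Proj$, and both the existence of an m-trace on it (ambidexterity of $L(\lambda)$) and the nonvanishing $\mt_{L(\mu)}(\Id_{L(\mu)})\neq 0$ for atypical $\mu$ are exactly the content of the conjecture there, not consequences of the projective-ideal trace of \cite{GKP1,GKP2}. Second, you strengthen part (A) to an ``if and only if'' (membership $L(\mu)\in\ideal_{L(\lambda)}$ equivalent to $\atyp(\mu)\le\atyp(\lambda)$), whereas the conjecture only asserts the implication from the atypicality inequality to membership, reserving the equivalence for the nonvanishing of the modified dimension; the converse of (A) is an additional claim you would need to justify separately (your inclusion $X_{L(\lambda)\otimes W}\subseteq X_{L(\lambda)}$ does give it, but only once the associated-variety formalism is in place for the algebra at hand). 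In short: the outline follows the expected Serganova--Kujawa-style strategy and is sensible, but as it stands it establishes nothing beyond the cases already cited in the paper.
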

This conjecture has been proved in several cases: Serganova \cite{serganova4} proved it for $\g=gl(m|n)$ and Kujawa \cite{Kuj2012} proved it for $\g=osp(m|2n)$.  For these $\g$ the conjecture implies there exists ideals $\ideal_k$ for $k=0,1,...,\operatorname{min}(m,n)$ one for each  atypicality such that $\Proj=\ideal_0 \subset \ideal_1 \subset ... \subset \ideal_{\operatorname{min}(m,n)}=\cat$
where each of these ideas has a non-zero m-trace.  Thus, Theorems \ref{T:DiskRmt} and \ref{T:dim3SphereSkein} imply that 
$\Skein_{\ideal_k}(B^3),$ $\Skein_{\ideal_k}(S^3),$ $\Skein_{\ideal_k}(D^2)$ and $\Skein_{\ideal_k}(S^2) $
are all non-trivial and one dimensional because $\ideal_k$ is generated by a simple module.

\linespread{1}


\begin{thebibliography}{99}

\bibitem{AGP} C. Anghel, N. Geer, and B. Patureau-Mirand, \textit{Relative (pre)-modular categories from special linear Lie superalgebras}, \href{https://arxiv.org/pdf/2010.13759.pdf}{[arXiv:2010.13759]}.

\bibitem{BW2} J. Barrett, B. Westbury, \textit{Spherical categories.} {
 Adv. Math.} \textbf{143} (1999), 357--375.

\bibitem{Bart2022} B. Bartlett - {\em Three-dimensional TQFTs via
    string-nets and two-dimensional surgery,} preprint
  arXiv:2206.13262.

\bibitem{BBJ} A. Brochier, D. Ben-Zvi, D. Jordan, \textit{Integrating
 quantum groups over surfaces}, Journal of Topology 2018 vol 11
 issue 4, Pages 874-917.
 
\bibitem{BCGP14} C.~Blanchet, F.~Costantino, N.~Geer, and
 B.~Patureau-Mirand, \textit{Non-Semisimple TQFTs, Reidemeister
 Torsion and Kashaev's Invariants}, Adv. Math. \textbf{301} (2016),
 1--78.
 
 
\bibitem{Bu} D. Bullock, \textit{Rings of $SL_2(\C)$-characters and
 the Kauffman bracket skein module}. Comment. Math. Helv., 72,
 no.4, (1997), 521


\bibitem{BW2011} F. Bonahon and H. Wong,\textit{ Quantum traces for
 representations of surface groups in $SL_2(\C)$},
 \emph{Geom. Topol.} \textbf{15} (2011).

\bibitem{BWT2022} F. Bonahon, H. Wong and T. Yang, \textit{
 Asymptotics of quantum invariants of surface diffeomorphisms I:
 conjecture and algebraic computations}, preprint arXiv:2112.12852.



\bibitem{CGP14} F. Costantino, N. Geer, B. Patureau-Mirand, \textit{
 Quantum invariants of 3-manifolds via link surgery presentations
 and non-semisimple categories,} Journal of Topology (2014) 7 (4)
 1005--1053.
 
\bibitem{CGPT} F. Costantino, N. Geer, B. Patureau-Mirand, and
 V. Turaev, \textit{Kuperberg and Turaev-Viro Invariants in Unimodular
 Categories}, Pacific J. Math. \text{306} (2020), no. 2, 421--450.

\bibitem{CGPV} F. Costantino, N. Geer, B. Patureau-Mirand, and
 A. Virelizier, \textit{Non compact 2+1-tqfts from non-semisimple
 spherical categories}, arXiv preprint.
 
\bibitem{CLe2021} F. Costantino and T.Q. Le, \textit{Stated skein
 algebras of surfaces}. J. Eur. Math. Soc. 24 (2022), no. 12,
 pp. 4063–4142

\bibitem{CLe2022} F. Costantino and T.Q. Le, \textit{ Stated skein
 modules of 3-manifolds and TQFT,} preprint, arXiv:2206.10906.



\bibitem{FKBL2017} C. Frohman, J. Kania-Bartoszynska, T. Le, \textit{
 Unicity for Representations of the Kauffman bracket Skein
 Algebra}, Invent. Math. \textbf{215} (2019), no. 2, 609--650.
 
\bibitem{GKP1} N. Geer, J. Kujawa, and B. Patureau-Mirand, \textit{
 Generalized trace and modified dimension functions on ribbon
 categories}, 
{Selecta Math., Volume 17, Issue 2 (2011), Page
 453--504.}
 
\bibitem{GKP2} N. Geer, J. Kujawa, and B. Patureau-Mirand, \textit{
 Ambidextrous objects and trace functions for nonsemisimple
 categories},
 {Proc. Amer. Math. Soc. 141 (2013), 2963--2978.}

 
\bibitem{GP2013Top} N. Geer and B. Patureau-Mirand, \textit{
 Topological invariants from non-restricted quantum groups. }
 Algebraic Geom.\ Topol.\ 13, 3305--3363 (2013).

\bibitem{GP2018} N. Geer and B. Patureau-Mirand,\textit{ The trace on
 projective representations of quantum groups. } { Letters in
 Mathematical Physics} \textbf{108} (2018), no. 1, 117--140.

 
\bibitem{GPV} N. Geer, B. Patureau-Mirand, and A. Virelizier,
\textit{ Traces on ideals in pivotal categories. } {Quantum Topology}
 \textbf{4} (2013).

 
\bibitem{GPT} N. Geer, B. Patureau-Mirand, and V. Turaev,
 \textit{Modified quantum dimensions and re-normalized link
 invariants. } \emph{Compos. Math.} \textbf{145} (2009), no. 1,
 196--212.
 
\bibitem{GY2022} N. Geer and M. Young, \textit{ Three dimensional
 topological quantum field theory from $U_q(gl(1|1))$ and $U(1|1)$
 Chern--Simons theory}, preprint, arXiv:2210.04286.
 
\bibitem{GJS} S. Gunningham, D. Jordan, and P. Safronov, \textit{ The
 finiteness conjecture for skein modules}, preprint,
 arXiv:1908.05233.

\bibitem{HW2022} T. Heidersdorf and H. Wenzl, \textit{ Generalized
 negligible morphisms and their tensor ideals}, Selecta Math.,
 \textbf{28}, 31 (2022).
 
\bibitem{Kuj2012} J. Kujawa, \textit{ The generalized Kac-Wakimoto
 conjecture and support varieties for the Lie superalgebra
 $osp(m|2n)$.} Recent developments in Lie algebras, groups and
 representation theory, 201--215, Proc. Sympos. Pure Math.,
 \textbf{86}, Amer. Math. Soc., Providence, RI, 2012.


\bibitem{LeS2022} T.Q. Le and A. Sikoran, \textit{ Stated SL(n)-Skein
 Modules and Algebras,} preprint arXiv:2201.00045.

\bibitem{MikWitten} V. Mikhaylov and E. Witten, \textit{ Branes and
 supergroups. }Comm. Math. Phys., 340(2):699--832, 2015.
 
\bibitem{Mull} G. Muller, \textit{Skein algebras and cluster algebras
 of marked surfaces,} Preprint arXiv:1204.0020.



 
\bibitem{Kirillov2012} A. Kirillov Jr, \textit{ String-net model of
 Turaev-Viro invariants}, preprint, arXiv:1106.6033.


\bibitem{Malt} G. Maltsiniotis, \textit{Traces dans les cat\'egories
 monoïdales, dualit\'e et cat\'egories monoïdales fibr\'ees [Traces in
 monoidal categories, duality and fibered monoidal categories]}, Cahiers
 Topologie G\'eom. Diff\'erentielle Cat\'eg. 36 (1995), 195--288.

\bibitem{Pr1999} J. Przytycki, \textit{Fundamentals of Kauffman
 bracket skein modules}, \emph{Kobe J. Math. } \textbf{16} (1999)
 45--66.

    
\bibitem{serganova4} V.~Serganova, \textit{On superdimension of an irreducible
 representation of a basic classical lie superalgebra}, \emph{Supersymmetry in
 Mathematics and Physics}, Lecture Notes in Mathematics, vol 2027, (2011).
 
\bibitem{Si2000} A. Sikora, \textit{ Skein modules and TQFT. } Knots
 in Hellas '98 (Delphi), 436--439, Ser. Knots Everything, 24, World
 Sci. Publ., River Edge, NJ, 2000.
    
\bibitem{dTh} D. Thurston, \textit{Positive basis for surface skein
 algebras}, \emph{Proc. Natl. Acad. Sci. USA} \textbf{111} (2014),
 9725--9732.

\bibitem{Tu} V. Turaev, \textit{ Quantum invariants of knots and
 3--manifolds.} de Gruyter Studies in Mathematics, 18. Walter de
 Gruyter \& Co., Berlin, (1994).
 
\bibitem{Tu1991b} V. Turaev, \textit{Skein quantization of Poisson
 algebras of loops on surfaces}, \emph{Ann. Sci. Sc. Norm. Sup.}
 (4) \textbf{24} (1991).

\bibitem{TuVirelizierBook} V. Turaev and A. Virelizier, \textit{
 Monoidal Categories and Topological Field Theory} Progress in
 Mathematics book series (2017).


\bibitem{Walker} K. Walker, \textit{ Going from $n+\epsilon$ to $n+1$
 in non-semisimple oriented TQFTs,} Talk in UQSL seminar, Feb 2022,
 https://canyon23.net/math/talks/np1.pdf

\end{thebibliography}
\end{document}